\numberwithin{equation}{section}
\numberwithin{figure}{section}
\renewcommand{\subsection}{\hspace{-\parindent}\refstepcounter{subsection}{\bf \arabic{section}\alph{subsection}. }}
\theoremstyle{plain}
\newtheorem{thm}{Theorem}[section]
\newtheorem{theorem}[thm]{Theorem}
\newtheorem{corollary}[thm]{Corollary}
\newtheorem{assumption}[thm]{Assumption}
\newtheorem{definition}[thm]{Definition}
\newtheorem{remark}[thm]{Remark}
\newtheorem{proposition}[thm]{Proposition}
\newtheorem*{proposition*}{Proposition}
\newtheorem{example}[thm]{Example}
\newtheorem{non-example}[thm]{Non-example}
\newtheorem{lemma}[thm]{Lemma}
\newtheorem*{claim*}{Claim} 
\newtheorem*{lemma*}{Lemma}
\newtheorem{point}{Point}
\newtheorem*{theorem*}{Theorem}
\newtheorem*{result*}{Result}
\newtheorem*{conjecture*}{Conjecture}
\newtheorem{Question}{Question}
\newtheorem{task}{Task}
\newcommand{\bC}{{\mathbb C}}
\newcommand{\bF}{{\mathbb F}}
\newcommand{\bK}{{\mathbb K}}
\newcommand{\bR}{{\mathbb R}}
\newcommand{\bZ}{{\mathbb Z}}
\title{A survey of equivariant operations on quantum cohomology for symplectic manifolds}
\author{Nicholas Wilkins}
\date{\today}
\begin{document}

\maketitle

\begin{abstract}
    In this survey paper, we will collate various different ideas and thoughts regarding equivariant operations on quantum cohomology (and some in more general Floer theory) for a symplectic manifold. We will discuss a general notion of equivariant quantum operations associated to finite groups, in addition to their properties, examples, and calculations. We will provide a brief connection to Floer theoretic invariants. We then provide abridged descriptions (as per the author's understanding) of work by other authors in the field, along with their major results. Finally we discuss the first step to compact groups, specifically $S^1$-equivariant operations. Contained within this survey are also a sketch of the idea of mod-$p$ pseudocycles, and an in-depth appendix detailing the author's understanding of when one can define these equivariant operations in an additive way.\end{abstract}

\section*{Acknowledgements}{{\it Thank you for useful conversations to Alex Ritter, Paul Seidel and Nate Bottman, mainly for ideas on structural things, operadic things, and how this work might fit into some sort of wider context. Thank you also to Egor Shelukhin, Viktor Ginzburg,  Guangbo Xu, Semon Rezchikov, Jae Hee Lee, Zihong Chen, and Todd Liebenschutz-Jones, for historically explaining their work. 

This survey was partially funded by the Max Planck Institute for Mathematics, by the Simons Foundation through award 652299, and by the Heilbronn Institute through a Heilbronn Research Fellowship.}}

\section{Note from the author}
Please do not hesitate to contact the author with idea, corrections or queries. This work is ongoing, and more and more people seem to be interested in ideas in and around this area. 

It should be noted that, while this is a survey paper, there are a few things in here that have not been published elsewhere. In particular, the sketch of mod-$p$ pseudocycles, general equivariant quantum operations, and the additivity problem to name a few. The reasoning is, each of these ideas might form the first part of a larger story, and while the author did not have time to pursue them to completion, they might still be a stepping stone for others to make headway in various directions. Hopefully too they put the content of this survey in greater context. 

Therefore, please feel free to attempt any of the questions/problems that are mentioned in the paper, or to expand on any of the (attempted) definitions. This work is intended not as a new piece of mathematics, but rather a demonstration of what people are working on in the area and more importantly what people {\it could} be working on in the area. 

It should also be made clear that this survey very heavily comes from the author's perspective. I have attempted to elucidate the work of other researchers in the area in Section \ref{sec:recentwork}, and most of these other researchers' works are a lot more geometric and technically impressive than the constructions that I presents in this survey: much of the maths within this text does not go beyond basic differential topology. A survey paper containing all but Section \ref{sec:recentwork} should rather be called ``A differential topological viewpoint of equivariant operations on quantum cohomology", but within the referenced section we see research that touches areas such as representation theory, Hamiltonian dynamics, and Lagrangian Floer theory (among others).

\section{Introduction}
In this survey paper, we aim to discuss some of the ideas in and around the use of equivariance with respect to certain small groups, used to study Gromov-Witten theory. Mainly we consider finite groups, although $S^1$ will appear near the end. We aim to provide a first step towards a general idea of what equivariant symplectic invariants should be, and how they are largely built from topological phenomena. 

\subsection{Background}

Topological methods can form very powerful tools to solve geometric problems: indeed, certain results that seem at first to be difficult can often be reduced to very simple topological problems. In this survey paper we will explore a particular topological method, that of considering equivariant moduli spaces as parametrised by homotopy quotients, as has been used in symplectic geometry (as well as the proceeding applications). 

There is something akin to an equivariant version of the symplectic operadic principle going on here, although the author wants it to be known that no actual, rigorous discussion of operads will be undertaken in this note. More, we mention this in case any interested parties happen to be perusing this note. With this bookkeeping out of the way, we make slightly clearer what we mean by the ``symplectic operadic principle": {\it given an invariant defined by counts of holomorphic curves, the algebraic nature of this invariant is determined by the operadic nature of the underlying space of domains}, discussed e.g. in \cite{abouzaidbottman}. For example, when looking at invariants in quantum cohomology, the natural operad is induced by the topological operad with $n$-ary operations being $\overline{\mathcal{M}}_{0,1+n}$.

Suppose that there exists a group action on some space of domains, by which is invariably meant some manifold consisting of pairs of the form $(\text{pointed nodal Riemann surface}, \text{automorphism})$. In the thesis of Betz \cite{betz}, much work was given to describe how one can combining two such domains. In \cite{cohnor}, this work was couched in a slightly different and more categorical way. In terms of the operadic structure, the idea is this: begin with two Riemann surfaces, with some incoming and outgoing ends/marked points, each Riemann surface with an automorphism group. One can glue the Riemann surfaces, output to input, and the semidirect product of automorphism groups determines an automorphism of the glued surface. This then yields a natural way to combine Riemann surfaces alongside a group action upon them. We will not mention anything else towards operads in this introduction.

This survey's oldest recognisable ancestors are the works of Betz as listed above, and also independently Fukaya (in the paper \cite{fukaya}). Their idea was broadly the following: there is an abstract definition of the Steenrod power operations of \cite{steenrod}, built using the classifying space $B \bZ/p$. However, the classifying space is really a homotopy type, and any construction must be possible with any model of this homotopy type $B \bZ/p$. Therefore, if one defines a specific space with the homotopy type of $B \bZ/p$, arising as the free quotient of a contractible space $E\bZ/p$, then one can define the Steenrod power operations using this specific model. Another, related, example is the cup-$i$ products of \cite{moshtang}. Indeed, Betz and Fukaya determine such a specific model for $B \bZ/p$, in terms of configuration spaces of $p$-tuples of Morse functions. In particular, the space under consideration consists of metric trees, with edges of the tree annotated with Morse functions.

The important point of this is that the construction can be readily extended to quantum cohomology (essentially, given a metric tree, one adds a holomorphic sphere at every vertex of valency at least $3$, see \cite[Section 2]{fukaya}). We will not, here, go into much more depth on these preceding works: every construction in this paper owes their origin at some level to the works of Betz and Fukaya, and understanding the definitions here should be sufficient to understand the constructions of the mentioned authors.

Moving towards the present, a construction of the $i$-th Steenrod power operation is obtained by counting the number of points in moduli spaces parametrised by the $i$-th homology group of some representative of $B \bZ/p$ (which, over $\bF_p$-coefficients, is a $1$-dimensional vector space). So indeed, a slight increase in precision can be obtained for the above constructions. In particular, we can choose a very specific representative of $H_i(B\bZ/p; \bF_p)$ (in particular, one of finite dimension) and define moduli spaces parametrised with this representative (which generally will be e.g. a finite dimensional compact smooth manifold). To give an example, whereas the constructions above for the Steenrod square might involve triples of distinct Morse functions, one can ``get away with" making a single choice of Morse function, but parametrised by $S^{\infty} \ ( `` = E \bZ/p")$, with appropriate regularity/transversality conditions. Note that this idea first arose in places such as \cite{bourgeoisoancea} and \cite{seidel}, and is known as the ``Borel construction".

We will briefly discuss the construction \cite{seidel}, being the most relevant to this survey. In that paper, one considers fixed point Floer homology, and a certain operation on it (which looks like a version of the Steenrod $2$-th power operation in fixed-point Floer theory), and one defines moduli spaces parametrised by a particular homology class of data: specifically, one chooses an injection of $S^{\infty}$ into the space of pairs $(J,H)$, and the associated homology classes of the $\bZ/2$-quotient of this $S^{\infty} \subset \{ (J,H) \}$ is the model for $B \bZ/2$ that parametrises various moduli spaces.

Building on that work, over the course of this paper we will follow the formalism of the more-recent work by the author and Seidel, in \cite{wilkins18} and \cite{covariant}. It is the ideas in these papers that we will try to summarise, albeit in a more general way than was presented therein.

It should be noted here (as it will be later) that various parties have used these Steenrod operations in practise. We will not list them all here, but we give each one (at time of writing) a summary in Section \ref{sec:recentwork}.

\subsection{Structure}

This survey will begin with a general definition of equivariant quantum operations, for monotone symplectic manifolds. Due to the relatively abstract nature of this definition in general, we will then proceed to give examples in the cases of: quantum Steenrod operations, and the quantum Adem relations. We use monotonicity to ensure that our constructions are very concrete, and we do not have to worry about technical details. It should be noted that the definitions as given can be generalised to the semipositive case with some more work, and more generally such definitions should be possible using the work of Bai-Xu \cite{bai2022arnold}, although the author lacked the time and knowledge to achieve this.

We will begin with a general definition of equivariant quantum operations in Section \ref{sec:sec-equivariant-quantum-operations}. This section contains a little preliminary topology, before defining the operations themselves in three steps. 

The next section, Section \ref{sec:sec-properties} will list some general properties of these general operations. This begins with a discussion of additivity, although the technical details will be relegated to the end of the paper in Section \ref{sec:additivity}. We discuss the major methods so far of finding relations between equivariant operations, and discuss how they are quite natural things to study in context. 

The next section, Section \ref{sec:calculations} will be devoted to the particular operations in question studied by the author and others, that being the quantum Steenrod power operations. We discuss first how to (partially) calculate these operations using the covariant constant condition (note that other, much more serious calculations have been achieved by Jae Hee-Lee in \cite{jae1},\cite{jae2}). We then move on to look at the quantum Adem relations. These are relations that have not been greatly explored, although there is certainly interesting content contained therein to do with structural results on quantum Steenrod power operations. 

We then proceed in Section \ref{sec:recentwork} to give an overview of the work of \cite{egorshel1}, \cite{egorshel2}, \cite{semon}, \cite{cgg}, \cite{seidelformal}, \cite{jae1}, \cite{jae2}, \cite{jae3}, \cite{xu}, and \cite{zihong}.

We follow this in Section \ref{sec:sec-floer-invariants} with a discussion of the Floer theoretic invariants, and what can be said about them. 

Finally, in Section \ref{sec:s1-equ}, we consider $S^1$-equivariant quantum operations, what can be said about them and how they relate to this story.

\section{Equivariant quantum operations}
\label{sec:sec-equivariant-quantum-operations}
In this section, we will give the general definition of equivariant quantum operations: although we must note some important caveats. This definition, in its current form, has not (to the author's knowledge) been published previously at this level of generality. However, it follows the method of constructing such operations in previous work of the author and Seidel (e.g. \cite{wilkins18}, \cite{covariant}), which themselves are built on the work of Fukaya \cite{fukaya} and Seidel \cite{seidel}. 

{\bf This is not a straightforward definition, in generality, so readers are advised to do one of the following first:

\begin{itemize}
    \item look at the definitions of quantum Steenrod squares/powers in the citations listed above (or the great definition of quantum Steenrod operations in \cite{jae1}), or
    \item go to the the end of this section (Section \ref{sec:sec-equivariant-quantum-operations}), pick one row from the table thereupon, and then follow along the construction in that specific case, before attempting to swallow the construction in its entirety.
\end{itemize} }

Our constructions will, as previously stated, all assume we are using a monotone symplectic manifold. We reiterate that this is not to say that such constructions would fail for more general symplectic manifolds, but rather that the technical details are irrelevant to the viewpoint of this paper, which stresses the topological underpinnnings. Throughout, we will provide remarks detailing what must be considered carefully for more general symplectic manifolds.

We briefly recall some equivariant topology. Given some group $G$, we let $EG$ be some contractible topological space equipped with a free right action of $G$. We define $BG :=  EG / G$. Given $X$, a topological space equipped with a left $G$-action, we define the homotopy quotient $EG \times_G X$ to be $EG \times X$ quotiented by the $G$-action \begin{equation} \label{equation:convention-for-g-action} g: (v,x) \mapsto (v \cdot g^{-1}, g \cdot x).\end{equation} We note that, unlike in many of the cases that one considers (i.e. commutative groups), this is the only viable choice for the $G$-action on the product. This is a ``nice" approximation of the quotient $G \backslash X$, which may be necessary for example in the case where $X$ is a smooth manifold and the $G$-action is not free. 

We then define the equivariant cohomology to be $H^*_G(X) := H^*(EG \times_G X)$.

\begin{remark}[Conventions]
    We recall \eqref{equation:convention-for-g-action}. In the case where $G$ is Abelian, one can replace the action $$g: (v,x) \mapsto (v \cdot g^{-1}, g \cdot x),$$ with \begin{equation} \label{equation:two-sided-diagonal-action} g: (v,x) \mapsto (g \cdot v, g \cdot x), \end{equation} where we recall that if $G$ is Abelian and acts on $V$, then there is a bijection between left and right actions on $V$, corresponding to $G \lefttorightarrow V \ni g \cdot v := v \cdot g^{-1} \in V \righttoleftarrow G$.

    We also note that, in the case that $G$ is Abelian, there is another obvious choice for the action of $G$ on $EG \times X$, which is \begin{equation}\label{equation:two-sided-diagonal-reversed} g \cdot (v,x) = (g^{-1} \cdot v, g \cdot x),\end{equation} and for most of our constructions we will use this (for example in Section \ref{subsubsec:structure-equiv-ops}.
\end{remark}

\subsection{Preliminary topology}
To start with, we prove something about the topology of classifying spaces. It is possible that the results below have been proven before, but the author has been unable to find a reference.

This subsection can be safely skipped without impacting the understanding of the rest of Section \ref{sec:sec-equivariant-quantum-operations}.

\subsubsection{Filtrations of classifying spaces by finite submanifolds}
\begin{lemma}
\label{lemma:lemma-1}

Given a finite group $G \le \text{Sym}(n)$, there is some choice of the classifying space $EG \rightarrow BG$ such that there is an exhaustive filtration $$E^0 \subset E^1 \subset \dots \subset EG$$ by finite dimensional smooth closed submanifolds, such that $G$ acts smoothly and freely. Defining $B^i = E^i / G$ then yields a filtration of $BG$ by finite dimensional smooth closed submanifolds.
\end{lemma}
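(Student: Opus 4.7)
The plan is to realize $EG$ explicitly as an infinite Stiefel manifold and obtain the filtration by its natural finite-dimensional subspaces. Since $G \le \text{Sym}(n)$, there is an orthogonal embedding $G \hookrightarrow \text{Sym}(n) \hookrightarrow O(n)$ via the permutation representation. It therefore suffices to produce a filtered model of $EO(n) \to BO(n)$ satisfying the required properties: the restriction of the free smooth $O(n)$-action to the subgroup $G$ will then automatically be free and smooth.

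For the filtration itself, I would set $E^k := V_n(\bR^{n+k})$, the Stiefel manifold of orthonormal $n$-frames in $\bR^{n+k}$. Each such Stiefel manifold is a compact smooth manifold without boundary (it is a regular level set of the orthonormality relations inside $(S^{n+k-1})^n$). The standard inclusions $\bR^{n+k} \hookrightarrow \bR^{n+k+1}$ induce smooth closed embeddings $V_n(\bR^{n+k}) \hookrightarrow V_n(\bR^{n+k+1})$, giving an exhaustive filtration of $EG := V_n(\bR^\infty) = \bigcup_{k \ge 0} V_n(\bR^{n+k})$. The right $O(n)$-action on frames by post-composition is smooth and free on every $E^k$, and restricting to $G$ gives the claimed free smooth $G$-action on each stratum.

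Finally, I would verify that this choice of $EG$ is contractible (so that the construction really models the universal bundle), and that the quotients $B^k := E^k/G$ are again smooth closed manifolds. Contractibility of $V_n(\bR^\infty)$ is a classical fact and can be extracted from any standard reference on characteristic classes: any map from a compact space into $V_n(\bR^\infty)$ factors through some $V_n(\bR^{N})$, and one can use the extra degrees of freedom in $\bR^{N+n}$ to deform into a standard frame. Since $G$ is finite and acts freely and smoothly on the compact manifold $E^k$, the quotient $B^k$ carries a unique smooth structure making $E^k \to B^k$ a principal $G$-bundle of smooth manifolds; compactness without boundary is preserved.

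The only real conceptual choice is selecting the Stiefel-manifold model rather than more combinatorial models such as the infinite join $G * G * \cdots$ (which fails to be a manifold) or the classifying space of the translation groupoid (which is infinite-dimensional from the start); the rest reduces to standard Stiefel-manifold bookkeeping. The main technical point to be careful about is that the inclusions $E^k \hookrightarrow E^{k+1}$ really are smooth closed submanifold embeddings, not merely continuous inclusions of CW-subcomplexes, which is what ultimately powers the subsequent differential-topological constructions in the survey.
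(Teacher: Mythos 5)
Your proposal is correct, but it takes a genuinely different route from the paper. The paper builds $E^i$ by hand from configuration spaces: it starts with $\text{Conf}_n(\bR^i)$, cuts down to the locus where each $|x_j|=1$, and then further restricts to the preimage of a regular value of the map recording all pairwise distances $|x_a-x_b|$; compactness comes from boundedness plus closedness, regularity from Sard/Baire category, and contractibility from an inductive fibration argument over the number of points. You instead invoke the standard Stiefel-manifold model $E^k = V_n(\bR^{n+k})$, $EG = V_n(\bR^\infty)$, using the embedding $G \le \text{Sym}(n) \hookrightarrow O(n)$ and the free right $O(n)$-action on frames. Both yield a filtration of $EG$ by compact smooth closed manifolds with free smooth $G$-action, and both reduce contractibility to the classical connectivity statement (yours via the well-known $(N-n-1)$-connectivity of $V_n(\bR^N)$, the paper's via an analogous connectivity estimate for its level sets). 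Your route is shorter and leans on a citable classical fact, and it generalizes immediately to any compact Lie subgroup of $O(n)$ rather than just finite permutation groups; the paper's route is self-contained and arguably more in the spirit of the configuration-space models of Fukaya and Betz that the survey references, which may be why the author chose it. One small thing worth stating explicitly if you flesh this out: verify that the diagonal permutation matrix of $g \in \text{Sym}(n)$ acting on the right sends orthonormal frames to orthonormal frames and that the stabilizer of any frame is trivial (immediate from linear independence of the columns), so that freeness really is inherited; you allude to this but it is the one place a reader might want the two-line check.
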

\begin{proof}
Note it is sufficient to prove this for $G = \text{Sym}(n)$.

We first observe that the configuration space $\text{Conf}_n(\bR^{\infty})$ is contractible and has a free $G$-action. Fixing some $n$, we denote by $C^i := \text{Conf}_n(\bR^{i})$, observing that $\iota^i: C^i \xhookrightarrow{} C^{i+1}$ using the inclusion $(x_1,\dots,x_i) \mapsto (x_1,\dots,x_i,0)$ on each of the $n$-components. Further, each $C^i$ is an open subset of $\bR^{ni}$ hence is a smooth manifold.

We descend to the following smooth submanifold of $C^i$ immediately: consider the set $D^i$ of $(x_1,\dots,x_n) \in C^i$ such that $|x_i| = 1$ for all $i$. This is certainly a bounded set, and $D^i$ is without boundary, but it is also not closed (e.g. two $x_i$ may become arbitrarily close).
 
 
 Next, we pick some bijection $$\Psi: \Biggr\{ 1, \dots, {{n}\choose{2}} \Biggr\} \rightarrow \{ (s,t) | 1 \le s< t \le n \}.$$ For $i \in \mathbb{Z}_{\ge 0}$, we then define $\alpha^i: D^i \rightarrow (\bR \setminus \{ 0 \})^{{n}\choose{2}}$ in the following way. Suppose that $1 \le k \le {{n}\choose{2}}$ with $\Psi(k) = (a,b)$. Then the  $k$-th component of $\alpha^i(x_1,\dots,x_n)$ is $|x_a - x_b|$. 
 
 Then for sufficiently large $i$, there is a choice of regular $\delta \in \bR \setminus \{ 0 \}$ sufficiently small such that $E^i := (\alpha^i)^{-1}((\delta,\dots,\delta)) \subset C^i$ is nonempty. 
 
 For nonemptiness, observe first that $x_1$ may be chosen freely, that $x_2$ must be chosen in a sphere of radius $\delta$ around $x_2$, that $x_3$ must be chosen in the intersection of two spheres of radius $\delta$, and so forth. If $i$ is sufficiently large, i.e. $i>n$ and $\delta < 1$, then there always exist a collection of such points. For regularity, as the set of regular values is of Baire second category, for each coordinate of $\alpha^i$ there is some Baire second category set of regular $\delta \in [0,1]$, and the intersection over all coordinates of $\alpha^i$ is also a Baire second category set. In particular, this intersection is nonempty and there some $\delta$ such that $(\delta,\dots,\delta)$ is regular for $\alpha^i$. By the preimage theorem, each $E^i$ is a finite-dimensional smooth manifold, and each has a free $G$-action inherited from $C^i$. Further, $\iota^i$ induces an inclusion $E^i \subset E^{i+1}$.

As $E^i \subset D^i$ it remains bounded, but being $(\alpha^i)^{-1}(\delta,\dots,\delta)$ it is also closed. Hence, it is compact. As each $E^i$ inherits a free $\text{Sym}(n)$ action, it thus remains to prove that $\tilde{EG} := \bigcup_i E^i$ is contractible (and hence $\tilde{EG}$ has the required properties of the universal cover of a classifying space, with the $E^i$ being the exhausting submanifolds with the sought-after properties). Note that because $\tilde{EG}$ is exhausted by smooth manifolds, then $\tilde{EG}$ is a CW-complex, so it suffices to prove that all of the homotopy groups of $\tilde{EG}$ vanish.  

We will now reintroduce $n$ to our notation, so $E^i = E^i(n)$, dependent on the choice of $n \in \bZ_{\ge 0}$. Note also that there is a smooth fibre bundle $E^i(n) \rightarrow E^i(n-1)$, induced by the projection $$(x_1,\dots,x_n) \rightarrow (x_1,\dots,x_{n-1}).$$ The fibre is the intersection of $n$ different (transversely and non-emptily intersecting) $i-1$-spheres. In particular, if $i$ is sufficiently large then the fibre of $E^i(n) \rightarrow E^i(n-1)$ is $(i-n-1)$-connected. We also observe that $E^i(1)$ is an $i-1$-sphere, hence is $(i-1)$-connected. Iteratively, using the long-exact-sequence of a fibration, we thus demonstrate that $E^i(n)$ is $(i-n-1)$-connected. Hence, fixing any choice of $n$, we observe that $\tilde{EG}$ must have vanishing homotopy group $\pi_n$, thus is contractible. 
\end{proof}

It should be noted that this immediately generalises to yield a filtration of $EG \times_G X$ by finite dimensional smooth compact submanifolds, for any smooth compact manifold $X$. More generally, if $X$ is a finite cell complex then we can prove identically the following:

\begin{corollary}
\label{corollary:corollary-to-lemma-1}
Given a finite group $G \le \text{Sym}(n)$, and $EG$ as in Lemma \ref{lemma:lemma-1}, and some finite cell complex $X$, then $B^i = E^i \times_G X$ yields a filtration of $E \times_G X$ by finite cell complexes.
\end{corollary}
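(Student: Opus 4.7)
The plan is to upgrade the filtration of $EG$ by compact smooth submanifolds from Lemma \ref{lemma:lemma-1} to a filtration of $EG \times_G X$ by finite cell complexes. The key step is producing a $G$-equivariant finite CW structure on each $E^i$ compatible with the inclusions $E^i \hookrightarrow E^{i+1}$; combining with a $G$-equivariant CW structure on $X$ then gives the result by passing to the quotient.

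First I would equip each $E^i$ with a $G$-equivariant finite CW structure. Since Lemma \ref{lemma:lemma-1} gives that $B^i = E^i/G$ is a closed smooth manifold (the $G$-action on $E^i$ being smooth and free), the smooth triangulation theorem yields a finite triangulation of $B^i$. Pulling back along the finite regular covering $E^i \to B^i$ produces a $G$-equivariant finite CW structure on $E^i$, with precisely $|G|$ cells lying over each cell of $B^i$. Taking $X$ to be equipped with a $G$-equivariant finite CW structure (arranged, if necessary, by equivariant subdivision of the given cell structure), the product $E^i \times X$ then carries a product CW structure on which the diagonal $G$-action $g\cdot(v,x) = (v\cdot g^{-1}, g\cdot x)$ is cellular. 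This action is free because $G$ already acts freely on the first factor, so $E^i \times_G X$ inherits a finite CW structure whose cells are in bijection with the $G$-orbits of product cells.

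Next I would arrange exhaustiveness and compatibility with the filtration. Because each $B^i$ is a closed smooth submanifold of $B^{i+1}$, a relative version of the smooth triangulation theorem permits an inductive choice of triangulations of $B^{i+1}$ extending those of $B^i$. Pulling back $G$-equivariantly and taking products with $X$ produces subcomplex inclusions $E^i \times_G X \hookrightarrow E^{i+1} \times_G X$ whose union is $EG \times_G X$, which is the sought-after filtration by finite cell complexes.

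The main obstacle will be the compatibility of the CW structures across the filtration: one needs that the triangulations of the $B^i$ can be chosen inductively so that each $B^i$ is a subcomplex of $B^{i+1}$, which is where the relative smooth triangulation theorem is applied and where one uses that the inclusions $E^i \hookrightarrow E^{i+1}$ coming from Lemma \ref{lemma:lemma-1} are inclusions of closed smooth $G$-submanifolds. Everything else is a routine packaging of standard CW machinery with the manifold structure already established in the previous lemma.
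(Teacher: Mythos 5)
Your argument is correct and matches what the paper intends: the paper gives no explicit proof of this corollary, stating only that one can "prove identically" after Lemma~\ref{lemma:lemma-1} (and noting the "immediate" case where $X$ is a smooth compact manifold), and your pull-back-of-a-triangulation-of-$B^i$ plus product-CW-structure argument is the standard way to fill in that gap. The only point worth flagging is your parenthetical "arranged, if necessary, by equivariant subdivision of the given cell structure": a general continuous $G$-action on a finite CW complex need not admit a $G$-CW structure by subdivision alone, so strictly speaking the corollary should be read with the implicit hypothesis that $X$ carries (or can be given) a finite $G$-CW structure — a condition that is automatic in all the cases the paper actually uses, where $X$ is a smooth compact manifold (Illman's theorem) or a manifold such as $M^{\times n}$ or $A \subset \overline{\mathcal{M}}_{0,1+n}$ with $G$ acting by diffeomorphisms.
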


\begin{example}
\label{example:z-mod-p}
In the case where $G = \bZ/p$, we may pick $EG = S^{\infty} \subset \bC^{\infty}$. Elements are of the form $w=(w_0,w_1,\dots,w_i,0,\dots)$ such that each $w_j \in \bC$, and $\sum_j |w_j|=1$. Then $E^i = S^{2i+1}$ consists of elements of $EG$ such that $w_j = 0$ for $j > i$. We note that $E^i$ has cells $$\Delta_{2i} = \{ w \in S^\infty \;:\; w_{i} \geq 0, \; w_{i+1} = w_{i+2} = \cdots = 0\}, $$
$$\Delta_{2i+1} = \{ w \in S^\infty \;:\; e^{-i \theta} w_i \geq 0 \text{ for some $\theta \in [0,2\pi/p]$} \}.$$ These cells, in addition to their $\bZ/p$-orbits, yield a cellular decomposition of $E$.
\end{example}

If $G$ contains no permutations of negative sign, then the action of $G$ on the $E^i$ is automatically orientation preserving: to see this, locally $E^i$ is a product of copies of $\bR$, and the action of $G$ is the permutation action. An even permutation automatically preserves the orientation on a product of copies of $\bR$. In particular, $B^i$ are automatically oriented (inheriting an orientation from $E^i$). However, if this is not the case then we must go a step further. If $G$ contains some permutation of negative sign, and if $G$ is also commutative then we can replace $E^i$ with $E^i \times E^i$ equipped with the diagonal action. Then $G$ acts orientation preservingly, and the resulting filtration $B^{2i} := E^i \times_G E^i$ is orientable. Notice then that the filtration only involves even dimensional smooth closed manifolds. In the case when $G$ is non-commutative and has some negative signs, we cannot ensure that $BG$ is stratified by orientable smooth closed manifolds. 


As is standard, there is thus a smooth triangulation into $i$-simplices for each $E^i$ (or $E^i \times E^i$), which is coherent (by which we mean it respects $E^i \subset E^{i+1}$) and is $G$-invariant. 

\begin{corollary}
\label{cor:corollary-1}
For a finite $n$-cell complex $X$ with a $G$-action, we may construct $H_*^G(X)$ as the union of the cellular homology $$\bigcup_i H_*(E^i \times_G X),$$ with respect to some choice of coherent smooth triangulation of the $B^i$. In particular, every $A \in H_j^G(X; \bZ)$ may be represented by some finite union of $j$-cells (perhaps with repetition).
\end{corollary}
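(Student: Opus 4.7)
The plan is to bootstrap from Corollary \ref{corollary:corollary-to-lemma-1}, which already supplies a filtration of $EG \times_G X$ by the finite cell complexes $B^i = E^i \times_G X$. Since singular homology commutes with ascending unions of subcomplexes of a CW complex (every compact subset of $EG \times_G X$, in particular every singular cycle, lies in some $B^i$), we immediately get
\[
H_*^G(X;\bZ) \; = \; \varinjlim_i H_*(B^i;\bZ).
\]
So any class $A$ is represented, already at some finite stage $i$, by an honest cycle in the finite complex $B^i$.

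To realise that cycle cellularly, I would first upgrade the inclusions $E^i \subset E^{i+1}$ of Lemma \ref{lemma:lemma-1} to a coherent $G$-equivariant smooth triangulation. The clean way is induction on $i$: start with a $G$-invariant smooth triangulation of the compact manifold $E^i$ (existence follows, for instance, from Illman's equivariant triangulation theorem applied to the smooth, free $G$-action), and then extend it across the cobordism $E^{i+1} \setminus \mathrm{int}(E^i)$ by a relative equivariant triangulation, compatibly with the $G$-action. Iterating yields a $G$-invariant triangulation of $\widetilde{EG} = \bigcup_i E^i$ in which each $E^i$ is a simplicial subcomplex; dividing by the free $G$-action gives a coherent cellular structure on the filtration of $BG$ by finite subcomplexes. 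Taking the product with the given finite cell structure on $X$ and descending once more to the $G$-quotient produces the desired cellular filtration of $EG\times_G X$ by the $B^i$.

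With this cellular model in hand, a class $A \in H_j^G(X;\bZ)$ is represented at some level $i$ by a cellular $j$-cycle in $B^i$, i.e.\ a finite integer combination of oriented $j$-cells, which is precisely a ``finite union of $j$-cells with repetition'' in the sense of the statement.

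The only real substance beyond Corollary \ref{corollary:corollary-to-lemma-1} is the inductive construction of the coherent equivariant triangulation, where one must simultaneously preserve the $G$-action, ensure simplicial compatibility under $E^i \hookrightarrow E^{i+1}$, and keep each stage finite. In the concrete setting of Lemma \ref{lemma:lemma-1}, where $E^i$ is cut out as a regular preimage inside $D^i \subset \mathrm{Conf}_n(\bR^i)$, this is quite tractable, but it is the one place requiring care; everything else is formal.
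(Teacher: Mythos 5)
Your proof is correct and follows essentially the same route as the paper: pass to a finite stage $B^i = E^i \times_G X$ of the filtration from Corollary \ref{corollary:corollary-to-lemma-1}, and then realise the class cellularly using a smooth triangulation of the compact manifold $B^i$. The one place you expand on the paper is the inductive equivariant-triangulation step (via Illman's theorem) to ensure coherence across the filtration; the paper takes this for granted, having remarked just before Corollary \ref{cor:corollary-1} that a coherent $G$-invariant smooth triangulation of the $E^i$ exists, so your extra care here is a useful clarification rather than a divergence in strategy.
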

\begin{proof}
Using Corollary \ref{corollary:corollary-to-lemma-1}, we observe that any $A \in H_j^G(X)$ must arise from some $A^i \in H_j(E^i \times_G X)$. As $E^i$ is a compact smooth manifold and $X$ is a finite cell complex, we know that the homotopy quotient $E^i \times_G X$ has a smooth finite triangulation by $n \dim(B^i)$-simplices, and thus any element of $C_j(E^i \times_G X)$ is written as a sum of $j$-cells.
\end{proof}

\subsubsection{Splitting of operations over orbits}
\label{subsubsec:splitting-orbits}
We will fix some notation for later use: we define $\{ O_1,\dots, O_I \}$ to be the set of orbits of the action $G \lefttorightarrow \{1,\dots,n \}$. Let $\text{Fix}_j$ be the set of $g \in G$ that fix every element of $O_j$ (alternatively, the intersection of all of the stabilisers of the elements of $O_j$). Observe that each $\text{Fix}_j$ is normal in $G$. Let $G_j$ be the set of elements of $G$ that act trivially on $$O_1 \sqcup \dots \sqcup O_{j-1} \sqcup O_{j+1} \sqcup \dots \sqcup O_I,$$ i.e. $G_j = \cap_{i \neq j} \text{Fix}_i$. Suppose that each of the $G_j$ pairwise commute with each other. It is immediate that each $G_j$ is a subgroup, and indeed that $G \cong G_1 \times \dots \times G_I$. Hence, $$BG = BG_1 \times \dots \times BG_I.$$ In particular, by restricting to each $O_j$ equipped with the action of $G_j$, we can treat each orbit of the permutation space independently while defining the operations in Corollary \ref{corollary:well-defined-operations}.

\subsubsection{Cohomology of the classifying space acts on equivariant cohomology}

We recall, as a final piece of preliminary topology, that given some cell complex $X$ equipped with a $G$ action. Then there is a map $H^*(BG) \rightarrow H^*_G(X)$ that is induced by projection $EG \times_G X \rightarrow BG$. This then provides us with a pairing \begin{equation} \label{eq:bg-pairing} *:H^*(BG) \otimes H^*_G(X) \rightarrow H^*_G(X).\end{equation}

\subsubsection{Equivariant cohomology}
\label{subsubsec:equiv-cohom}

We will give some explicit constructions of the $G$-equivariant cohomology of a smooth manifold $X$, for the readers' convenience.

We know that in general the definition is $H^*_G(X) := H^*(EG \times_G X)$. However, in the case where $G = \mathbb{Z}/p$, we can be more explicit. Let us assume that $p>3$ (the $p=2$ case is easier, adding the relation $e^2 = u$). Pick $g:X \rightarrow X$ the action of $1 \in \mathbb{Z}/p$. 

Another way to calculate $H^*_{\mathbb{Z}/p}(X)$ is as follows: begin with $$C^*_{\mathbb{Z}/p}(X) = C^*(X)[u] \otimes \Lambda(e).$$ Then define the differential as follows:

\begin{equation} \label{equation:equivariant-differential} \begin{cases} \begin{array}{ll} d_{\mathbb{Z}/p}(x u^j) = & dx u^j + (-1)^{|x|} (g - \text{id}) \cdot x u^j e   \\ d_{\mathbb{Z}/p}(x u^j e) = & dx u^j e+ (-1)^{|x|} (\text{id} + g + g^2 + \dots + g^{p-1}) \cdot x u^{j+1}.   \end{array} \end{cases} \end{equation}

One notices too that we may choose $S^{\infty} \subset C^{\infty}$ to be the model for $E \mathbb{Z}/p$, with the induced action of multiplication by $e^{2 \pi i / p}$. Then there is a cellular decomposition of $S^{\infty}$, with for each  $k=0,1,\dots$ some collection of discs $\Delta^k_{0},\dots, \Delta^k_{p-1} \subset S^{\infty}$ of dimension $k$, such that $e^{2 \pi i / p} \cdot \Delta^k_j = \Delta^k_{j+1}$ for all $j$ (see e.g. \cite[Section 2]{covariant}). Recalling Example \ref{example:z-mod-p}, if $k$ is even then these are smooth discs, and if $k$ is odd then these are discs that are smooth away from codimension $2$ corners. Then $$d \Delta^{2k}_0 = \Delta^{2k-1}_0 + \dots + \Delta^{2k-1}_{p-1},$$ and  $$d \Delta^{2k+1}_0 = \Delta^{2k}_1 + \dots + \Delta^{2k}_{0}.$$ One can see that this is dual to the situation in \eqref{equation:equivariant-differential}. In particular, one can use the above discs to calculate $H_*^{\mathbb{Z}/p}(X)$ via $C_*^{\text{cell}}(E \mathbb{Z}/p) \otimes C_*(X)$.

\subsection{Equivariant quantum operations}
\label{sec:eq-qu-op}

Firstly, we would like to recall the definition of pseudocycles and bordisms between them. We will only consider a compact manifold $M$ (for brevity). 

\begin{definition}
A smooth map $f: X \rightarrow M$ is called a {\it pseudocycle of dimension $x$} if $X$ is an oriented manifold of dimension $x$, and $$\Omega_f := \bigcap_{\begin{array}{c} K \subset X \\ K \text{ compact} \end{array}} \overline{f(X \setminus K)}$$ is covered by a union of smooth maps $g_i: X'_i \rightarrow M$ where $\dim X'_i \le x-2$.  
\end{definition}

\begin{definition}
Given two pseudocycles $f_0: X_0 \rightarrow M, \ f_1: X_1 \rightarrow M$ of dimension $x$, a smooth map $H: Y \rightarrow N$ is called a {\it pseudocycle bordism between $f_0$ and $f_1$} if $Y$ is a smooth manifold with boundary, $Y$ is of dimension $x+1$, and the boundary $\partial Y = X_1 - X_0$ (the negative sign denoting negative orientation) such that $H|_{X_i} = f_i$. Further, $\Omega_H$ has dimension at most $x-1$.
\end{definition}

It is a result of Zinger \cite{zinger} that there is an isomorphism between $H_*(M)$ and the free Abelian group generated by pseudocycles modulo pseudocycle bordisms (i.e. modulo relations of the form $[f] - [g]$ for pseudocycles $f$ and $g$ any time there is a pseudocycle bordism between $f$ and $g$). If we were considering noncompact manifolds, then we would either have to strengthen our definition (i.e. ``precompact") or consider ``locally finite pseudocycles", as in \cite{lf-pseudocycles}, to obtain instead locally-finite homology.

To keep everything explicit, we will assume that $M$ is a closed monotone symplectic manifold (although a similar, albeit more complicated, definition should exist for convex and/or weakly monotone symplectic manifolds). We will fix a compatible almost complex structure $J$. This yields a well defined Chern class for $TM$, so we say $c_1(M) := c_1(TM)$.

Regarding quantum cohomology, the reader will be assumed to be somewhat familiar: however, references are provided for the full details, and we recall the outline. Note first that $$QC^*(M) := C^*(M;\Lambda),$$ where $$\Lambda:= \bK [ q ]$$ is a Novikov ring over some formal variable with $q$ of index $2$. The differential is just the usual cohomological differential, extended linearly over $\Lambda$. The cohomology with respect to this differential is $QH^*(M) = H^*(M; \Lambda)$.

There is a so-called quantum product on $QH^*(M)$, which we give the broad definition below (for technical details, recall \cite{jhols}). Preliminarily, given $x,y \in H^*(M)$, we apply Poincar\'e duality to obtain corresponding homology classes and then under the isomorphism in \cite{zinger} we respectively for $x,y$ obtain pseudocycles $f:X \rightarrow M$ and $g: Y \rightarrow M$. 

\begin{remark}
To sidestep internalising the notion of the omega-limit set $\Omega_f$ of a pseudocycle $f$, one can assume that the Poincar\'e dual of $x,y$ are represented by embedded submanifolds if it is helpful. 
\end{remark}

Further, let $\mathcal{M}_k(J)$ consist of $J$-holomorphic maps $u:S^2 \rightarrow M$ such that $c_1(M)(u_*[S^2]) = k$ and define $ev_{k,i}$ to be evaluation at $i \in S^2 \cong \mathbb{C} \cup \{ \infty \}.$ Then, for generic choices of $f,g$, we define the $k$-th quantum product $$PD(x *_k y) := \left( ev_{k,3}: ev_{k,1}^{-1}(\text{im}(f)) \cap ev_{k,2}^{-1}(\text{im}(g)) \rightarrow M \right),$$ as a pseudocycle, and the total (small) quantum product $$H^*(M) \otimes H^*(M) \mapsto QH^*(M), \quad (x,y) \mapsto \sum_{j \ge 0} x *_j y q^j,$$ extending bilinearly over $\Lambda$ in both inputs. To see a more rigorous description of the quantum product, see \cite{jhols}. Note that we will interchangeably use $PD: H^*(M) \rightarrow H_{\text{dim} M -*}(M)$ and $PD: H_*(M) \rightarrow H^{\text{dim} M - *}(M)$, as there is no ambiguity.

\subsubsection{Setup and structure}
\label{subsubsec:structure-equiv-ops}

Suppose now that $n \in \mathbb{Z}_{\ge 1}$. Suppose that $G \le \text{Sym}(n)$ and $p | |G|$.


Throughout this section, we will fix these $n,G,p$. All (co)homology henceforth will be taken with $\bF_p$ coefficients unless otherwise stated. We recall that $\overline{\mathcal{M}}_{0,1+n}$ is defined as follows, following \cite{jholssympl}.

A stable nodal genus $0$ curve is a collection of finitely many spheres $S^2$, where some pairs of two spheres are attached at a node, so that if one forms a graph where each sphere is a vertex and each node an edge, this graph is a connected tree.

An $1+n$-pointed stable nodal genus $0$ curve is a stable nodal genus $0$ curve with $1+n$ marked points distinct from the nodes, so that each sphere has $\# \text{ marked points} + \# \text{ nodes } \ge 3$. Generally, the first marked point is distinguished, and we call the sphere on which this marked point is found the ``principal component" of the pointed stable nodal genus $0$ curve. 

\begin{definition}
For $n \ge 3$, $\overline{\mathcal{M}}_{0, 1+ n}$ is the space of stable curves of genus zero with $1+n$ marked points, up to automorphisms (i.e. M\"obius transformations concurrently on each sphere).
\end{definition}

While this is not necessarily the most currently used version of this definition, it will suffice for our purposes (in particular, we only really care about $n \ge 2$). 

An alternative viewpoint is that $\mathcal{M}_{0,1+n}$ consists of $1+n$ distinct marked points $z_0,z_1,\dots, z_n$ on the sphere, up to reparametrisation by $PSL(2,\mathbb{C})$, with a compactification $\overline{\mathcal{M}}_{0,1+n}$ consisting of: if one or more marked points collide, then they bubble off into a tree of spheres, with the arrangements of the marked points on the bubbles being dictated by the relative speed and angle with which the points approach each other. 

With this, we notice that $\overline{\mathcal{M}}_{0,1+2}$ is a point, $\mathcal{M}_{0,1+3} \cong S^2 \setminus \{0,1,\infty\}$ by triple-transitivity hence $\overline{\mathcal{M}}_{0,1+3} \cong S^2$, and we can obtain larger $\overline{\mathcal{M}}_{0,1+n}$ by taking blow-ups of products of $S^2$.

We note that there is an action of $G$ on  $\mathcal{M}_{0,1+n}$ by permuting the last $n$ marked points, and that this naturally extends to $\overline{\mathcal{M}}_{0,1+n}$, the compactification.

Given some closed submanifold $A \subset \mathcal{M}_{0,1+n}$ acted on by $G$, we demonstrate how to construct an operation:

$$Q: H_*(EG \times_G A) \otimes QH^*(M)^{\otimes I} \rightarrow QH^*(M),$$ recalling that $I$ is the number of $G-$orbits of $\{ 1 , \dots, n \}$. We note that $EG \times A$ is exhausted by smooth closed manifolds ($E^i \times A$), and hence arguing as in Corollary \ref{cor:corollary-1}, that every element of $H_i(EG \times_G A)$ may be represented as some finite union of smooth $i$-cells. The $G$-action is $(gv, m) \sim (v,gm)$ for $g \in G$. We let $\rho : EG \rightarrow BG$ be the projection.

We thence do the following steps: 

\begin{enumerate}[label=(\Alph*)]
    \item suppose that $C$ is some finite union of $i$-cells in $EG \times_G A$ representing an element $[C] \in H_i(EG \times_G A)$ (for some $i$). Suppose also that $j \in \mathbb{Z}_{\ge 0}$. We will thus define an operation $$Q_j(C) : C^*(M)^{\otimes n} \rightarrow C^*(M),$$
    \item demonstrate that this descends to a well-defined map on cohomology, $$\begin{array}{c} \overline{Q_j}(C) : \bigotimes_{i=1}^I H^*(M) \rightarrow H^*(M) \\ \overline{Q_j}(C)(x_1 \otimes \dots \otimes x_I) = Q_j(C)(x_1^{|O_1|} \otimes \dots \otimes x_I^{|O_I|}),\end{array}$$
    \item demonstrate that $\overline{Q_j}(C)$ only depends on the homology class of $C$.
\end{enumerate}

If all of these hold, then we can define $$Q([C] \otimes x_1 \otimes \dots \otimes x_I) := \sum_{j \ge 0} \overline{Q_j}(C)(x_1 \otimes \dots \otimes x_I) q^j.$$

It is important here, as we will discuss later, to notice that \'a priori there is no reason to assume that this map is additive.

In order to define $Q([C]  \otimes x_1 \otimes \dots \otimes x_I)$, we will have to make a choice. First, recall that $G$ acts on $\{1,\dots,n\}$, with orbits $\{O_1,\dots,O_I \}$. For each $i$, let $S_i \subset G$ be the stabiliser of the smallest element of $O_i$. Recall that we have fixed some Morse function $f:M \rightarrow \bR$. We wish to define perturbations $f^i_{v,m,s}: M \rightarrow \bR$ for $i=1,\dots,I$, and $m \in A$, and $v \in EG$, and $s \in [0,\infty)$ such that:
\begin{enumerate}
    \item $f^i_{v,m,s} = f$ if $s \ge 1$,
    \item $f^i_{gv,gm,s} = f^i_{v,m,s}$ for $g \in G$,
    \item $f^i_{gv,m,s} = f^i_{v,m,s}$ for $g \in S_i$.
\end{enumerate}

If we make a generic choice, then the moduli spaces we define below will be well-defined smooth manifolds. We assume without loss of generality (i.e. up to isomorphism of groups) that $O_1 = \{1,\dots, |O_1| \}, \ O_2 = \{|O_1|+1,\dots, |O_1| + |O_2| \}, \dots, O_I = \{ n - |O_I|+1,\dots,n\}$. For each $i=1,\dots,I$ and $j=1,\dots,|O_i|$, we denote by $O_i(j)$ the $j$-th smallest element of $O_i$. Then for all $i,j$ we choose $g_{i,j} \in G$ to be such that $O_i(1) = g_{i,j} O_i(j)$.

\subsubsection{Step A}
\label{sec:stepA}
Recall that we are modelling homology via cellular chains. Indeed, we can do this with cells such that they are smooth manifolds up to codimension $2$, and removing these codimension $2$ pieces we obtain smooth manifolds with boundary. Suppose that $B \in C_i(EG \times_G A)$ is some $i$-cell. We will first define an operation $Q_j(C) : C^*(M)^{\otimes n} \rightarrow C^*(M).$ To avoid having to consider manifolds with corners, we denote by $B^{wc}$ to be $B$ less its codimension $2$ substrata (hence a manifold with boundary).

For $m = (z_0,z_1,\dots,z_n) \in A$, define $S_m$ to be $S^2$ with $(-\infty,0]$ attached at $z_0$ and a copy of $[0,\infty)$ attached at each of $z_1,\dots,z_n$ (we call these copies respectively $[0,\infty)_1,\dots, [0,\infty)_n$). We will also pick a lift $\tilde{B}$ of $B$ to a union of $i$-cells in $EG \times A$.

For $x_0,\dots,x_n \in \text{crit}(f)$, define the moduli space $\mathcal{M}_{j}(B;x_0,x_1,\dots,x_n)$ to consist of tuples $(v,m, u)$ such that:
\begin{enumerate}
   \item $(v, m) \in \tilde{B}^{wc} \subset EG \times A$,
   \item $u: S_m \rightarrow M$,
  \item $u$ is $J$-holomorphic on $S^2$ of Chern number $j$,
 \item $u|_{(-\infty,0]}$ is a $-\nabla f$-flowline, with $u(-\infty) = x_0$,
 \item  $u|_{[0,\infty)_{O_i(j)}}$ is a $-\nabla f^i_{m,g_{i,j}v,s}$-flowline. 
 \item $\lim_{s_k \in [0,\infty)_k, \ s_k \rightarrow \infty} u(s_k) = x_k$.
\end{enumerate}

In the first property, by ``$(v,m) \in \tilde{B}^{wc}$" we mean that $B = \sum_k B_k$ is a sum of cells, which we can lift to $EG \times A$, and we consider triples $(i,v,m)$ such that $(v,m) \in \tilde{B_i}^{wc}$. See Figure \ref{fig:equivariantoperations}

\begin{figure}
    \centering
\includegraphics[scale=0.6]{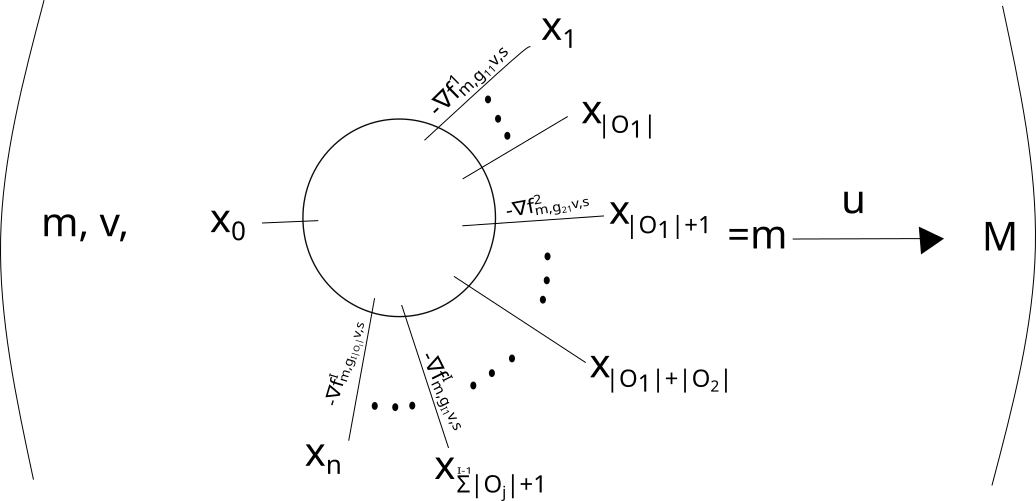}
    \caption{The elements of the moduli space. We have adorned the domain with the conditions that we need $u$ to satisfy on (most) of its bits.}
    \label{fig:equivariantoperations}
\end{figure}

Importantly, condition $(1)$ is independent of the choice of lift $\tilde{B}$: given some other choice of $i$-cell over $B$, this will be of the form $g \tilde{B}$; then $(v,m,u) \mapsto (gv, gm, u)$ yields a bijection of moduli spaces. Further, condition $(5)$ is well-defined independent of the choices of $g_{i,j}$ because of the assumption on $f^i_{v,m,s}$ being invariant under multiplication by $S_i$ on the $v$-coordinate. In particular, if $g_{i,j}$ and $g'_{i,j}$ are two choices, then $g'_{i,j} \circ g^{-1}_{i,j} \in S_i$. Thus, $g'_{i,j} = s_{i,j} g_{i,j}$ for some $S_{i,j} \in S_i$, and so $$f^i_{m,g'_{i,j}v,s} = f^i_{m,s_{i,j} g_{i,j}v,s} = f^i_{m,g_{i,j}v,s}.$$ 

Now one needs to consider boundaries of this moduli space. There are two possibilities, corresponding either to $\partial B^{wc}$ or to a single Morse breaking on any one of the incoming and outgoing flowlines. We will briefly analyse the former of these two cases. 

Suppose that $$(v,m,u) \in \partial \mathcal{M}_{j}(B;x_0,x_1,\dots,x_n)$$ sits over $(v,m) \in \partial B^{wc}$, and suppose that $g \in G$ is such that  $(gv, g^{-1}m) \in \partial B^{wc}$. Then $$(gv, g^{-1}m,u) \in  \partial \mathcal{M}_{j}(B;x_0,x_{g1},\dots,x_{gn}),$$ where all of the moduli space conditions $(1)-(6)$ are satisfied, and all apart from $(5)$ are obvious. To demonstrate that $(5)$ holds, if $g(O_i(j)) = O_i(k)$ then $g g^{-1}_{i,j} O_i(1) = g^{-1}_{i,k} O_i(1)$. So $g_{i,k} g g^{-1}_{i,j} \in S_i$. In particular, $f^i_{gm,g_{i,k} g v,s} = f^i_{gm,g_{i,j} v,s}$. Hence $g$ acts by relabelling the elements of $O_i$ for each $i$. In particular, we can confirm that $(gv, g^{-1}m,u)$ is within $\partial \mathcal{M}_{j}(B;x_0,x_{g1},\dots,x_{gn})$. Further, if for every $k=1,\dots,I$ and every $i,j \in O_k$ we have $x_i = x_j$, and if additionally $dB = 0$ (hence the number of $g$ satisfying $(gv, g^{-1}m) \in \partial B^{wc}$ is divisible by $p$), then we notice that solutions $(v,m,u)$ with $(v,m) \in \partial B^{wc}$ come in families of size $0$ modulo $p$. Hence, in characteristic $p$, the only boundary points come from Morse breaking. This is a first piece of evidence that closed $B \in C_*^G(A)$ will yield well-defined homology operations, which we formalise in Section \ref{sec:stepB}. 

This moduli space $\mathcal{M}_{j}(B;x_0,x_1,\dots,x_n)$ is a union of smooth manifolds-with-boundary of dimension \begin{equation}
    \label{eq:moduli-dimension} |x_0| - \sum_{k=1}^n |x_k| + 2j + i. 
\end{equation}

Observe that this moduli space can be compactified by adding a set covered by a union of manifolds of dimension at most $|x_0| - \sum_{k=1}^n |x_k| + 2j + i-2$. This is a standard monotone sphere bubbling argument.

\begin{definition}
\label{definition:moduli-def}
For each $i$-cell $B$ in our cellular decomposition of $EG \times_G A$, define $$\begin{array}{l} Q_j(B): C^*(M)^{\otimes n} \rightarrow C^*(M) \\ Q_j(B)(x_1 \otimes \dots \otimes x_n) = \sum_{x_0 \in \text{crit}(f) : \eqref{eq:moduli-dimension}  \text{ vanishes}} \#_p \mathcal{M}_{j}(B;x_0,x_1,\dots,x_n) \cdot x_0, \end{array}$$ where we observe that $\mathcal{M}_{j}(B;x_0,x_1,\dots,x_n) $ is a smooth $0$-dimensional manifold whenever \eqref{eq:moduli-dimension} vanishes, and $\#_p$ is the count of points mod-$p$.

We then extend each $Q_j(B)$ linearly, and further define $$Q_j(B + B') := Q_j(B) + Q_j(B').$$
\end{definition}

\subsubsection{Step B}
\label{sec:stepB}
We first note that we may without loss of generality (using K\"unneth) assume that the cellular decomposition of $EG \times_G A$ is coherent, under the projection to $BG$, with respect to some cellular decomposition of $BG$. We also can consider each cell as a cell in $EG \times A$ by the lifting property, thus associate $C_*(EG \times_G A) = C_*(EG) \otimes_G C_*(A)$. This yields us a ``nice" basis for  $C_*(EG \times_G A) = C_*(EG) \otimes_G C_*(A) = (C_*(EG) \otimes C_*(A))/G$.


Before the theorem, we observe that there is a chain map $$\cap: C_*(EG \times_G A) \times C^*(BG) \rightarrow C_*(EG \times_G A),$$ via composing the cap product with the cohomology map induced by projection $C^*(BG) \rightarrow C^*(EG \times_G A)$.

Further, we observe that given any vector space $A$ with $G$-action, and any field $F$, there is an injection $$\text{Hom}(A/G, F) \xhookrightarrow{} \text{Hom}(A,F).$$ In fact, this lands in the $G$-invariant homomorphisms. In particular, any element of $C^*(EG \times_G M^{\times n}; \mathbb{F}_p)$ may be considered as a $G$-invariant element of $ C^*(EG) \otimes C^*(M^{\times n})$. 

We note that, there exists a choice of cellular decomposition on $BG$ such that the lifted cellular decomposition on $EG$ consists of elements of $C_*(EG)$ in families of size $|G|$, i.e. each family is associated with an element of $C_*(BG)$ under the quotient map. In particular, in such a standard cellular decomposition of $(EG, BG)$, if $\pi: EG \rightarrow BG$ is projection then each nonzero element of $C_*^{\text{cell}}(BG)$ has exactly $|G|$-preimages. We obtain, using the dual basis associated to the basis of cells, a $|G|$ to $1$ map $$\Psi: C^*_{\text{cell}}(EG) \rightarrow C^*_{\text{cell}}(BG),$$ as follows: just as the cells form a basis of the vector space of cellular chains, the dual cocells form a dual basis of $C^*(EG)$ and $C^*(BG)$, and if $D$ is a cell in $C_*(EG)$ and $D^*$ its dual in $C^*(BG)$, and $[D]$ its image in $C_*(BG)$, then $\Psi(D^*) := [D]^*$. This yields an additive chain map (this obviously commutes with the boundary map, although the resulting homology map vanishes hence is uninteresting).

Further, we will fix in $EG$ a choice ``$1$" $\in C^0(EG)$, as the representative of $1 \in H^0(EG) \cong \mathbb{F}_p$.

\begin{theorem}
\label{theorem:main-theorem}
Given $C \in C_*(EG \times_G A)$ (a sum of $i$-cells), if $dC = 0$ then $Q_j(C)$ extends to a chain map $Q^{ex}_j(C): C^*(EG \times_G M^{\times n}) \rightarrow C^*(M)$. Using the discussion above this theorem, i.e. writing an element of $C^*(EG \times_G M^{\times n})$ as an element of $C^*(EG) \otimes C^*(M^{\times n})$, then Definition \ref{definition:moduli-def} is on the chain-level $Q_j(C)(-) := Q^{ex}_j(C)(1 \otimes -)$, and the following {\it consistency equation} may be chosen to hold for any $\tilde{b}$: \begin{equation}\label{equation:strapping} Q^{ex}_j(C)(\tilde{b} \otimes \underline{x}) := Q^{ex}_j(C \cap \Psi(\tilde{b}))(1 \otimes \underline{x}).\end{equation} 
\end{theorem}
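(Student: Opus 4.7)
The plan is to take the consistency equation \eqref{equation:strapping} as the defining formula for $Q^{ex}_j(C)$ and then verify the chain map property through a one-dimensional moduli space boundary count. Concretely, for cellular $\tilde{b} \in C^*(EG)$ and $\underline{x} \in C^*(M)^{\otimes n}$, I would set $Q^{ex}_j(C)(\tilde{b} \otimes \underline{x}) := Q_j(C \cap \Psi(\tilde{b}))(\underline{x})$ and extend linearly in $\tilde{b}$. Recovering Definition \ref{definition:moduli-def} on the chain level in the special case $\tilde{b} = 1$ requires only that $\Psi$ carry the chosen representative of $1 \in H^0(EG)$ to a chain-level representative of $1 \in H^0(BG)$ under which capping against $C$ returns $C$; this pins down the meaning of the symbol ``$1$" fixed just above the theorem.

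The next step is to check that this right-hand side descends to $C^*(EG \times_G M^{\times n})$. The embedding $\mathrm{Hom}(A/G, \mathbb{F}_p) \hookrightarrow \mathrm{Hom}(A, \mathbb{F}_p)$ recorded just before the theorem identifies cochains on the quotient with $G$-invariant cochains on the cover, so it is enough to verify $G$-equivariance of the formula. This follows from two facts: $Q_j(C)$ depends only on $C$, not on any chosen lift $\tilde{C}$ to $EG \times A$, by the bijection of moduli spaces $(v, m, u) \mapsto (gv, gm, u)$ noted immediately after Step A; and $\Psi$ factors through $BG$, hence is automatically $G$-invariant on the input.

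The core of the proof is the chain map identity. For each cell $B$ and critical-point tuple with the dimension formula \eqref{eq:moduli-dimension} equal to $1$, the Gromov-compactified moduli space $\overline{\mathcal{M}}_j(B; x_0, x_1, \ldots, x_n)$ is a compact one-dimensional manifold with boundary, with sphere-bubbling strata of real codimension at least two by monotonicity. The mod-$p$ boundary count therefore vanishes, and the boundary splits into three families: Morse breaking at the outgoing flowline contributes $d_M Q_j(B)(\underline{x})$; Morse breaking at an incoming flowline contributes $Q_j(B)$ applied to the differential on one of the inputs; and solutions sitting over $\partial B^{wc}$ contribute $Q_j(\partial B)(\underline{x})$, using the $G$-equivariant identification $(v, m, u) \sim (gv, g^{-1}m, u)$ on boundary strata described after Definition \ref{definition:moduli-def}. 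Summing cellwise across $C$ and invoking $dC = 0$ cancels the third family, yielding the chain map identity for $Q^{ex}_j(C)(1 \otimes -)$. For a general $\tilde{b}$, applying the same boundary count to $C \cap \Psi(\tilde{b})$ and combining with the chain-level Leibniz rule $d(C \cap \Psi(\tilde{b})) = dC \cap \Psi(\tilde{b}) \pm C \cap \Psi(d\tilde{b})$ (using $d\Psi = \Psi d$) promotes this to the full chain map equation $d_M \circ Q^{ex}_j(C) \pm Q^{ex}_j(C) \circ d = 0$.

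The hard part is the sign and orientation bookkeeping inside the $\partial B^{wc}$-family above: one must confirm that the induced orientation on the moduli-space boundary matches cellwise with the cellular orientation on $\partial B$, so that the resulting count is genuinely $Q_j(\partial B)(\underline{x})$ and not a twisted variant. This is exactly where the orientability conclusions of Lemma \ref{lemma:lemma-1} and the discussion after Example \ref{example:z-mod-p} become essential: in characteristic $p = 2$ the signs are automatic, and in the remaining cases one replaces $E^i$ by $E^i \times E^i$ so that $G$ acts orientation-preservingly and all boundary orientations align coherently. A secondary (but standard in the monotone setting) point to verify is that the Gromov compactification is regular to the codimension-two level, so that bubbled configurations genuinely do not enter the mod-$p$ boundary count.
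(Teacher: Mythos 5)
Your proposal takes the same route as the paper: define $Q^{ex}_j(C)(\tilde{b}\otimes\underline{x}) := Q_j(C\cap\Psi(\tilde{b}))(\underline{x})$, recover the $\tilde{b}=1$ case as a special instance, and establish the chain map property by analyzing the boundary of the one-dimensional moduli spaces (Morse breaking on in/out flowlines plus the $\partial B^{wc}$ stratum), combining this with $d\Psi = \Psi d$ and the Leibniz rule for the cap product so that $dC = 0$ kills the extra boundary family. Your explicit discussion of descent via the $\mathrm{Hom}(A/G,\mathbb{F}_p)\hookrightarrow\mathrm{Hom}(A,\mathbb{F}_p)$ embedding and your flagging of the orientation bookkeeping are more detailed than the paper's terse treatment, but the underlying argument is the same.
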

\begin{proof}
To begin with, we note that Equation \eqref{equation:strapping} is always well-defined, as $\Psi$ is an additive homomorphism.

We now need to prove that the stated map is a chain map. In particular, we want to show that \begin{equation} \label{equation:chain-map} d Q_j(C)(\tilde{b} \otimes \underline{x}) =(-1)^{|\tilde{b}|} Q_j(C)(\tilde{b} \otimes d \underline{x}) + Q_j(C)(d \tilde{b} \otimes \underline{x}). \end{equation} Using \eqref{equation:strapping}, this amounts to prove that $$d Q_j(C \cap \Psi(\tilde{b}))(1 \otimes \underline{x}) =(-1)^{|\tilde{b}|} Q_j(C \cap \Psi(\tilde{b}))(1 \otimes d \underline{x}) +   Q_j(C \cap \Psi(d \tilde{b}))(1 \otimes \underline{x}).$$ Using the fact that $dC=0$, observe that $d( C \cap \Psi(\tilde{b})) = C \cap d \Psi( \tilde{b})$. Recall from above the theorem that $\Psi$ commutes with the differential. Hence, we must in fact prove that \begin{equation}\label{eq:strapping2} Q_j(C \cap \Psi(\tilde{b}))(1 \otimes \underline{x}) = (-1)^{|\tilde{b}|}  Q_j(C \cap \Psi(\tilde{b}))(1 \otimes d \underline{x}) +  Q_j(d(C \cap \Psi( \tilde{b})))(1 \otimes \underline{x}).\end{equation}

Returning to Definition \ref{definition:moduli-def}, if we consider the endpoints of a $1$-dimensional version of the moduli space from the referenced definition, then the three sorts of boundaries that may occur are from the boundary of and $i$-cell (i.e. $dE$), or breaking of on of the incoming or the outgoing Morse flowlines. So in fact, $d \circ Q_j(E) = Q_j(E) \circ d + Q_j(dE)$, which proves \eqref{eq:strapping2}.
\end{proof}

The following argument uses a proof due to Seidel (see \cite[Lemma 2.5]{covariant}) to greatly simplify it. We recall that $G \subset \text{Sym}(n)$ as a permutation group, the set $\{1,\dots,n \}$ splits into $G-$orbits $O_1,\dots,O_I$.
\begin{lemma}
The map $$\begin{array}{c} \text{pow}_{G} : C^*(M^I) \rightarrow (C^*(EG) \otimes C^*(M^{|G|}))^G \xleftarrow{\cong} C^*(EG \times_G M^{|G|}), \\  x_1 \otimes \dots \otimes x_I \mapsto 1 \otimes x_1^{\otimes |O_1|} \otimes \dots \otimes x_I^{\otimes |O_I|},\end{array}$$ is well-defined on cohomology, where we have (abusing notation) chosen $1 \in C^0(EG)$ to be the image under $\pi^*$ of some representative of $1 \in H^0(BG)$.
\end{lemma}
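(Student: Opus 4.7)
The map $\text{pow}_G$ is defined on the nose at the cochain level, so well-definedness on cohomology decomposes into three tasks: (a) verifying the output lands in the $G$-invariants $(C^*(EG)\otimes C^*(M^n))^G$; (b) checking that cocycles are sent to cocycles; and (c) checking that the cohomology class of the image depends only on the cohomology classes of the inputs.

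For (a), the first tensor factor $1 \in C^0(EG)$ is $G$-invariant by construction (pullback along $\pi$ of a distinguished representative of $1 \in H^0(BG)$), so the question reduces to $G$-invariance of the tensor power $x_1^{\otimes |O_1|} \otimes \cdots \otimes x_I^{\otimes |O_I|}$ in $C^*(M^n)$. Using the orbit decomposition $G \cong G_1 \times \cdots \times G_I$ of Section \ref{subsubsec:splitting-orbits}, this further reduces to the one-orbit case: for each $i$, show that $x_i^{\otimes |O_i|}$ is $G_i$-invariant under the transitive permutation action. For $\sigma \in G_i$ the Koszul sign is $\mathrm{sgn}(\sigma)^{|x_i|}$, which is automatic in characteristic $2$; for odd $p$, the divisibility $p \mid |G_i|$ combined with the cellular orientation conventions set up in Lemma \ref{lemma:lemma-1} (and the remark on replacing $E^i$ with $E^i \times E^i$ when $G$ contains permutations of negative sign) forces this sign to collapse to $+1$, which is the technical heart of the argument.

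For (b), the Leibniz rule applied to $1 \otimes x_1^{\otimes |O_1|} \otimes \cdots \otimes x_I^{\otimes |O_I|}$ produces a sum of terms each carrying one $dx_i$ factor, so the output is closed whenever each $x_i$ is closed. For (c), if $x_i = x'_i + dy_i$ the telescoping identity
\[
(x'_i + dy_i)^{\otimes |O_i|} - (x'_i)^{\otimes |O_i|}
 = \sum_{j=0}^{|O_i|-1}(x'_i)^{\otimes j} \otimes dy_i \otimes (x'_i+dy_i)^{\otimes |O_i|-1-j}
\]
exhibits the difference as a sum of cochains, each containing a single $dy_i$; after appending the $1$-factor and applying (a) to symmetrise, these assemble into $d$ of an explicit $G$-invariant cochain. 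Iterating over $i$ proves independence of cocycle representatives.

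The main obstacle is the Koszul sign analysis in (a), which must be done uniformly across all orbits and all group elements. Once this is handled in the one-orbit case, parts (b) and (c) are routine Leibniz-style bookkeeping; the cleanest presentation likely follows the cyclic-case argument of Seidel \cite[Lemma 2.5]{covariant}, extended to arbitrary $G \leq \mathrm{Sym}(n)$ via the product decomposition of Section \ref{subsubsec:splitting-orbits}.
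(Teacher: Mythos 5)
Your decomposition into (a) $G$-invariance, (b) cocycle preservation, (c) independence of representatives matches the paper's, and (b) is unproblematic. The real content of the lemma is (c), and here your telescoping route has a genuine gap. Writing $x^{\otimes n} - (x')^{\otimes n}$ as a telescoping sum does give a primitive in $C^*(M^n)$, but that primitive (e.g.\ $\sum_j \pm\, x^{\otimes j}\otimes y\otimes (x')^{\otimes n-1-j}$) is \emph{not} $G$-invariant — a cyclic shift scrambles the positions of the $x$ and $x'$ factors so that no term of the same shape is recovered, and averaging over $G$ is unavailable since $p$ divides $|G|$. This is not a bookkeeping issue you can symmetrize away: the failure of the chain-level power map to commute with $d$ up to a $G$-invariant homotopy in $C^*(M^n)$ alone is \emph{precisely} the phenomenon Steenrod operations detect, and the correction necessarily uses the extra $C^*(EG)$ degrees of freedom. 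The paper's proof (following Seidel, \cite[Lemma 2.5]{covariant}) sidesteps all of this by building a two-term quasi-isomorphism $\bigl(0\to\langle z\rangle\to\langle x,x'\rangle\to 0\bigr)\longrightarrow\bigl(0\to 0\to\langle y\rangle\to 0\bigr)$ sending both $x$ and $x'$ to $y$, then invoking K\"unneth and the fact that $(C^*(EG)\otimes(-))^G$ preserves quasi-isomorphisms; both $1\otimes x^{\otimes n}$ and $1\otimes (x')^{\otimes n}$ then map to $1\otimes y^{\otimes n}$, forcing equality in cohomology. You cite this as ``the cleanest presentation,'' but it is not one option among several — the telescoping alternative you sketch does not close.

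On (a): you correctly flag that $G$-invariance of $x^{\otimes n}$ in $C^*(M^n)$ may involve a Koszul sign $\mathrm{sgn}(\sigma)^{|x|}$, but the resolution you propose (the $E^i\times E^i$ orientation convention of Lemma~\ref{lemma:lemma-1}) is unrelated: that convention concerns orientability of the finite-dimensional skeleta of $BG$ as manifolds, not signs arising in the permutation action on $C^*(M^n)$. The divisibility $p\mid |G_i|$ does not help either. What actually disposes of the issue in the paper's applications is that the groups used ($\mathbb{Z}/p$, $(\mathbb{Z}/p)_2$, $\mathbb{Z}/p\wr\mathbb{Z}/p$, $\dots$) consist entirely of even permutations for odd $p$, so $\mathrm{sgn}(\sigma)=1$ always; and for $p=2$ signs are trivial. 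The paper leaves this implicit, but your proposed mechanism would not fill the gap were it actually present.
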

\begin{proof}
We first note that because of our choice of $1 \in C^0(EG)$, the map as given is well-defined (i.e. it lands intermediately in $G$-invariant chains). Further, the second arrow is an isomorphism of chain complexes. That it is a chain map is a combination of K\"unneth and the fact $\pi^*$ is a chain map. For surjectivity, any such $G$-equivariant homomorphism immediately descends to a homomorphism in the quotient, and for injectivity notice that any homomorphism that lifts to a $G$-invariant homomorphism that is $0$ must be $0$ on the quotient.

If $x$ and $x'$ both represent the same thing in $H^*(M)$, then for any fixed $x_2, \dots\ x_I$ we obtain that $$1 \otimes x^{\otimes |O_1|} \otimes x_2^{\otimes |O_2|} \otimes \dots \otimes x_I^{\otimes |O_I|}$$ and $$1 \otimes (x')^{\otimes |O_1|} \otimes x_2^{\otimes |O_2|} \otimes \dots \otimes x_I^{\otimes |O_I|}$$ both represent the same thing in $H^*(EG \times_G M^{\times |G|})$: firstly, $x-x' = d z$, so use inclusion of complex $0 \rightarrow z \rightarrow \langle x,x' \rangle \rightarrow 0$ and map to complex $0 \rightarrow 0 \rightarrow \langle y \rangle \rightarrow 0$. The same is true in any ``coordinate", and using K\"unneth we may treat each ``coordinate" separately.
\end{proof}

\begin{corollary}
\label{corollary:well-defined-operations}
The map $$\begin{array}{c} \overline{Q_j}(C) : \bigotimes_{i=1}^I H^*(M) \rightarrow H^*(M) \\ \overline{Q_j}(C)(x_1 \otimes \dots \otimes x_I) = Q_j(C)(\text{pow}_G(x_1 \otimes \dots \otimes x_I)),\end{array}$$ is well-defined.
\end{corollary}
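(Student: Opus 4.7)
The plan is to recognise $\overline{Q_j}(C)$ as a composition of two maps, each of which has already been shown to descend to cohomology, so that the corollary reduces to a routine matching of notation. Specifically, the preceding lemma supplies a map $\text{pow}_G$ that is well-defined on cohomology, and Theorem \ref{theorem:main-theorem} supplies (under the standing assumption $dC=0$) a chain map $Q^{ex}_j(C): C^*(EG \times_G M^{\times n}) \to C^*(M)$, which in particular induces a map on cohomology. Composing the two yields the claimed operation.

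First I would unpack the consistency equation \eqref{equation:strapping} in order to confirm that the right-hand side of the formula in the corollary really is this composition. Taking $\tilde{b} = 1$ (the fixed cocycle representative of $1 \in H^0(EG)$ picked just before Theorem \ref{theorem:main-theorem}) and $\underline{x} = x_1^{\otimes |O_1|} \otimes \dots \otimes x_I^{\otimes |O_I|}$, equation \eqref{equation:strapping} gives $Q^{ex}_j(C)(1 \otimes \underline{x}) = Q_j(C)(\underline{x})$ on the chain level, so $Q_j(C)(\text{pow}_G(x_1 \otimes \dots \otimes x_I))$ is literally $Q^{ex}_j(C)$ applied to the chain-level image of $\text{pow}_G$.

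Next I would appeal directly to the preceding lemma: for any cocycle representatives of the classes $x_1,\dots,x_I$, the chain $\text{pow}_G(x_1 \otimes \dots \otimes x_I)$ represents a class in $H^*(EG \times_G M^{\times n})$ which depends only on the input cohomology classes, not on the chosen representatives. Then, because $dC=0$, Theorem \ref{theorem:main-theorem} ensures that $Q^{ex}_j(C)$ is a chain map, so it descends to a map on cohomology. The composition of two well-defined cohomology-level maps is therefore well-defined, giving the desired $\overline{Q_j}(C): H^*(M)^{\otimes I} \to H^*(M)$.

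Most of the work has in fact been shouldered by Theorem \ref{theorem:main-theorem} (where the chain-map property was reduced to the three possible degenerations of the $1$-dimensional moduli space) and by the preceding lemma (whose injectivity/surjectivity on $G$-invariants is elementary). The only genuine obstacle here is the bookkeeping step of checking that \eqref{equation:strapping} matches the two notations for $Q_j(C)$, after which the proof is immediate. Consequently the proof of the corollary is short and essentially formal.
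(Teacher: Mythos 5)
Your proof is correct and is exactly the argument the paper intends: the paper gives no explicit proof for this corollary because it is meant to follow by composing the chain-map conclusion of Theorem \ref{theorem:main-theorem} with the cohomology-level well-definedness of $\text{pow}_G$ from the preceding lemma, which is precisely what you do. One small quibble: the identification $Q_j(C)(-) = Q^{ex}_j(C)(1 \otimes -)$ is stated directly in Theorem \ref{theorem:main-theorem} rather than being a consequence of \eqref{equation:strapping} with $\tilde{b}=1$ (that substitution is essentially tautological since $C \cap \Psi(1) = C$), but this does not affect the validity of the argument.
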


\begin{remark}
Observe that $\text{pow}_{O_i}$ is not additive. Thus, the operations $\overline{Q_j}(C)$ need not be additive.
\end{remark}

\subsubsection{Step C}
\begin{theorem}
\label{theorem:bordisms}
The operation $\overline{Q_j}(C)$ only depends on the homology class of $C$.
\end{theorem}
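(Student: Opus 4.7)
The plan is to show that $Q^{ex}_j(C)$ and $Q^{ex}_j(C')$ are chain-homotopic as maps $C^*(EG\times_G M^n)\to C^*(M)$ whenever $[C]=[C']\in H_*(EG\times_G A)$, with the homotopy built directly from moduli spaces parametrised by a cellular nullhomology. Precomposing with $\text{pow}_G$, which by Lemma 2.12 descends to cohomology, then yields the desired equality $\overline{Q_j}(C)=\overline{Q_j}(C')$ via Corollary 2.13.

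First, by Corollary \ref{cor:corollary-1}, both $C$ and $C'$ can be represented as sums of $i$-cells, and since $[C]=[C']$, there is a sum of $(i+1)$-cells $D\in C_{i+1}(EG\times_G A)$ with $\partial D=C-C'$. The operation $Q_j(D)$ from Definition \ref{definition:moduli-def} and its extension $Q^{ex}_j(D)$ via Equation \eqref{equation:strapping} make sense on any sum of cells, without requiring closedness: at the chain level, $Q^{ex}_j(D)(\tilde b\otimes \underline{x})=Q_j(D\cap\Psi(\tilde b))(1\otimes\underline{x})$.

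Second, I would re-examine the boundary analysis in the proof of Theorem \ref{theorem:main-theorem}. The chain-level identity
\[
d_M\circ Q_j(E)(\underline{x}) = Q_j(E)(d_{M^n}\underline{x}) + Q_j(\partial E)(\underline{x})
\]
is what is actually proved there by counting the three kinds of endpoints of the $1$-dimensional moduli space $\mathcal{M}_j(E;x_0,\underline{x})$: contributions from $\partial E$, breaking on the outgoing Morse flowline, and breaking on an incoming flowline. Crucially, this analysis never used that $dE=0$; the vanishing of $dE$ was only invoked later, when one wanted $Q_j^{ex}(E)$ to be a chain map. Applying this identity to $E=D\cap\Psi(\tilde b)$ for each cell $\tilde b$, using that $\Psi$ commutes with the differential (stated above Theorem \ref{theorem:main-theorem}) and the Leibniz rule for $\cap$, one obtains at the chain level
\[
d_M\circ Q^{ex}_j(D) \;-\; (-1)^{|D|}\, Q^{ex}_j(D)\circ d \;=\; Q^{ex}_j(\partial D) \;=\; Q^{ex}_j(C) - Q^{ex}_j(C').
\]
Thus $Q^{ex}_j(D)$ is a chain homotopy between $Q^{ex}_j(C)$ and $Q^{ex}_j(C')$, and the induced maps on cohomology $H^*_G(M^n)\to H^*(M)$ agree. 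Composing with the cohomology-level map $\text{pow}_G$ of Lemma 2.12, Corollary \ref{corollary:well-defined-operations} then gives $\overline{Q_j}(C)=\overline{Q_j}(C')$.

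The main obstacle I anticipate is the bookkeeping of boundary contributions arising from $\partial B^{wc}$ when $B$ is a cell of $D$: in Section \ref{sec:stepA} these were controlled for closed $C$ via the $\mathrm{mod}\,p$ grouping of solutions coming from the $G$-action permuting the inputs, and one must verify that this $p$-divisibility is still enough when the inputs have the symmetric form produced by $\mathrm{pow}_G$ (i.e.\ equal on each orbit $O_i$). Since we only ever evaluate the homotopy on such symmetric inputs, the same orbit-counting argument used in Step A forces these residual boundary contributions to vanish mod $p$, and the chain homotopy relation passes through cleanly. A secondary but routine check is that the codimension-$2$ strata removed in the passage $B\mapsto B^{wc}$ do not contribute new boundary terms in the $(i+1)$-parameter family, which follows from their codimension exceeding that of the relevant moduli component.
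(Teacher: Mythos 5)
Your proof is correct and follows essentially the same route as the paper: choose a cellular chain $D$ with $\partial D = C - C'$, and use the $(i+1)$-dimensional moduli spaces parametrised by $D$ to produce a mod-$p$ chain homotopy whose boundary terms decompose into $Q_j(C)$, $-Q_j(C')$, and Morse breakings, with residual $\partial D^{wc}$-contributions cancelling mod $p$ by the same orbit-counting argument as in Step A. The paper's two-sentence proof is terser, but the mechanism (bordism $\Rightarrow$ $1$-dimensional moduli space $\Rightarrow$ chain homotopy) is identical, and your explicit phrasing via $Q^{ex}_j(D)$ and the consistency equation is a faithful and slightly more careful version of the same argument.
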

\begin{proof}
Given homologous cellular chains $C, C'$, there is some cellular chain $\partial D = C - C'$ mod-$p$. Each cell in the chain $D$ yields a $1$-dimensional moduli space (a $1$-dimensional manifold with boundary), and we then construct a $1$-dimensional moduli space by gluing together these intervals along pairs that meet with opposite orientation. By considering the boundaries of this moduli space, one obtains a mod-$p$ chain homotopy between $\overline{Q_j}(C)$ and $\overline{Q_j}(C')$.
\end{proof}

\begin{definition}
\label{definition:eq-operations}
Define $$\begin{array}{l} Q: H_*(EG \times_G A) \otimes QH^*(M)^{\otimes I} \rightarrow QH^*(M). \\ Q([C] \otimes x_1 \otimes \dots \otimes x_I) := \sum_{j \ge 0} \overline{Q_j}(C)(x_1 \otimes \dots \otimes x_I) q^j, \end{array}$$ (where $q$ is the quantum variable).
\end{definition}

\subsection{Examples}
\label{subsec:examples}
Remember that we fixed $n$, so considered $\overline{\mathcal{M}}_{0,1+n}$ with marked points $z_0,z_1,\dots,z_n$ and picked $G \le \text{Sym}(n)$ and $p | |G|$. We will always fix $z_0 = 0$ on the principal component of the bubble tree.

Importantly, recall:\begin{itemize} \item $n$ is the number of marked points,
\item $G$ is the group acting on these marked points,
\item $I$ is the number of inputs of the given operation (i.e. the number of $G$ orbits in $\{1,\dots,n\}$),
\item $A$ is the space of domains (in our case, given as a selection of $z_0, z_1,\dots,z_n \in \overline{\mathcal{M}}_{0,1+n}$
\item Op is the induced operations
\item Ref is the reference
\end{itemize}

In all cases, $p$ is the characteristic. These operations are illustrated in Figure \ref{fig:table}.

\hspace*{-1.7cm}
\begin{tabular}{ |c|c|c|l|l|l| } 
 \hline
   $n$ & $G$ & $I$  & $A$ &Op & Ref \\\hline 
 $p$ & $\mathbb{Z}/p$ & $1$ &  $z_k = e^{2 \pi i k/ p}$ for $k=1,...,p$ & $x \mapsto QSt_p(x)$ & \cite{wilkins18}, \ \cite{seidelformal}\\\hline 
 $p+1$ & $\mathbb{Z}/p$ & $2$  & $\begin{array}{l} z_k = e^{2 \pi i k/ p} \text{ for } k=1,...,p,\\ \text{and } z_{p+1} = \infty\end{array}$ & $(\alpha, x) \mapsto Q \Sigma_{\alpha}(x)$ & \cite{covariant} \\\hline 
 $p+2$ & $\mathbb{Z}/p$ & $3$  &  $\begin{array}{l} z_k = e^{2 \pi i k/ p} \text{ for } k=1,...,p, \\  z_{p+1} = \infty, \text{ and } z_{p+2} \text{ varies in } S^2\end{array}$ & $(\alpha,x,\beta) \mapsto Q \Pi_{\alpha,\beta}(x)$ & \cite{covariant}\\\hline 
 $2p$ & $(\mathbb{Z}/p)_2$ &  $2$ &  $\begin{array}{l} z_{j} \text{ and } z_{p+j} \text{ have both bubbled off} \\ \text{together, for each of } j=1,\dots,p \end{array}$& $(x, y) \mapsto Q St_p(x * y)$ & \cite{wilkins18} for $p=2$\\\hline 
 $2p$ & $(\mathbb{Z}/p)_2$ &  $2$ &  $\begin{array}{l} z_{(j-1)p+1},\dots,z_{jp} \text{ have all bubbled off} \\ \text{together, for each of } j=1,2 \end{array}$& $(x,y) \mapsto Q St_p(x)* QSt_p(y)$ & \cite{wilkins18} for $p=2$\\\hline 
 $p^2$ & $(\mathbb{Z}/p)_3 \rtimes  (\mathbb{Z}/p)^p_1$ & $1$ &   $\begin{array}{l} z_{(j-1)p+1},\dots,z_{jp} \text{ have bubbled off} \\ \text{together, for each of } j=1,\dots,p \end{array}$ & $x \mapsto Q St_p \circ QSt_p(x)$ & \cite{wilkins18} for $p=2$\\
 \hline
\end{tabular}

Key: 
$$(\mathbb{Z}/p)_2 := \langle (1 \dots p)(p+1 \dots 2p) \rangle \subset S_{2p}.$$
$$(\mathbb{Z}/p)_3 := \biggr\langle kp+j \mapsto \begin{cases} \begin{array}{lr} (k+1)p+j &  \text{ if } (k+1)p+j - p^2 \le 0 \\ (k+1)p+j - p^2 & \text{ if } (k+1)p+j - p^2 > 0  \end{array}\end{cases}\biggr\rangle \subset S_{p^2}.$$
$$(\mathbb{Z}/p)^p_1 := \langle (1 \dots p),(p+1 \dots 2p),\dots,(p^2-p+1,p^2) \rangle \subset S_{p^2}.$$

We will note some precise nomenclature here, as the names are very similar: generally, $QSt_p$ is called ``the quantum Steenrod power" or ``the quantum Steenrod $p$-th power". The map $Q \Sigma_{\alpha}$ is often called ``the quantum Steenrod operation". 

\begin{figure}
    \centering
\hspace*{-2cm}\includegraphics[scale=0.45]{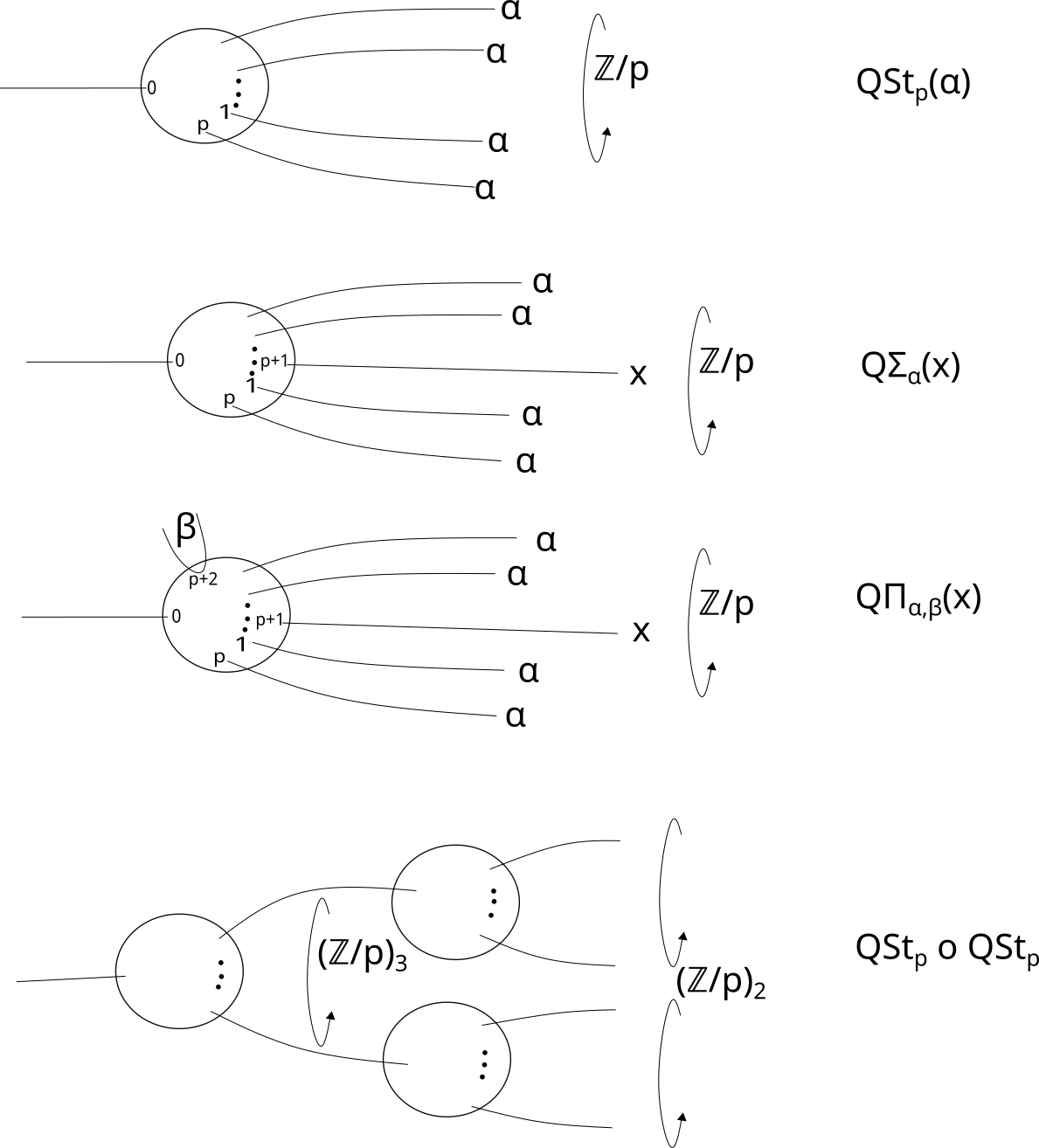}
    \caption{The first three and the last of the entries in the table above. }
    \label{fig:table}
\end{figure}

\section{Properties}

\label{sec:sec-properties}


We will now present some properties of these operations. 

\subsection{Additivity}
\label{subsec:additivity}
This is a difficult thing to prove in general, and barring a complete description of the $G$-equivariant homology of $\overline{\mathcal{M}}_{0,n}$ for all $G$ and $n$, the author is unable to determine in exactly which cases additivity can be achieved. One way to prove additivity for a specific operation -- which, we recall, is defined by taking the homology class of the homotopy quotient of some smooth closed $A \subset \overline{\mathcal{M}}_{0,n}$ -- is as follows. 

First, we find some element of $H^*(BG)$ that acts injectively via the cup product on $H^*_G(A)$. In the case where $A$ consists of fixed points, this amounts to finding such an element for $A = \text{pt}$. When for example $G$ is cyclic, there may possibly be some $H^*(BG)$-torsion elements in low degrees for which additivity fails, but additivity succeeds for everything in higher degrees. Then, if we can find such an element of $H^*(BG)$ acting injectively by the cup product on $H^*_G(A)$, one can prove additivity with details as relegated to Appendix \ref{sec:additivity}.

Despite this, we note that we can sometimes define so-called ``reduced operations", which in essence 'force' our relations to become additive. These may in general hold less information than the operations themselves, being ``modulo torsion" (indeed they may even vanish, as is always a possibility when trying to conduct localisations of a ring) but in general they may contain some information. As an example, one can use reduced operations to prove the quantum Adem relations.

Such reduced relations are defined as follows. Suppose that $t \in H^*(BG)$. Suppose also that we have fixed some space of domains $A$. We define $$H^*_{G-red,t}(A) := H^*_G(A) / \langle \ker(t_A \cup) \rangle,$$ to be the {\it operations reduced to $t$ over $A$} where as in the referenced appendix $t_A$ is the pullback of $t$ to $H^*_G(A)$ and $t_A \cup : H^*_G(A) \rightarrow H^*_G(A)$ sends $x \mapsto t_A \cup x$. In particular, if defining such relations we would inevitably like $\ker(t \cup)$ to be reasonably small (thus retaining as much information as possible). 

We note that Definition \ref{definition:eq-operations} can be dualised to yield an operation $$Q: QH^*(M)^{\otimes I} \rightarrow QH^*(M) \otimes H^*_G(A).$$ Hence, we can define an operation $$Q^{t}(X): \bigotimes_{i=1}^I H^*(M) \rightarrow QH^*(M) \otimes H^*_{G-red,t}(A),$$ by composing $Q$ with the quotient map. Moreover, we can leverage all the work in Appendix \ref{sec:additivity} to prove that such operations are additive. However, we will not prove that in this survey paper.

\subsection{Relations between equivariant operations}
\subsubsection{Methods of finding relations}
\label{subsubsec:methods-finding-relations}
Broadly speaking, relations that have been found between these equivariant quantum relations come in two flavours:
\begin{enumerate}
    \item Find two different chain-level realisations for the same element of $H^*(A 
    \times_G EG)$ in Definition \ref{definition:eq-operations} (see e.g. the Cartan relation \cite[Section 5]{wilkins18} or the covariant constantcy condition \cite[]{covariant}).
    \item Use properties, e.g. injectivity or surjectivity, of the maps associated to using different groups (see e.g. quantum Adem relations).
\end{enumerate}


Ignoring practical considerations, there are two fairly immediate, natural ways to consider relations between equivariant operations. The first is to use the equivariant topology of the space of domains, which corresponds to the first sort of relations listed above. Broadly speaking, $\mathbb{Z}/p$-equivariant topology is reasonably straightforward: repeated application of the so-called ``partial localisation" in Proposition \ref{proposition:partial-localisation} can give one a lot of information about the equivariant topology of simple spaces (like Deligne-Mumford spaces), and the work in \cite{kim} formalises this picture, at least for $\overline{\mathcal{M}}_{0,1+p}$. However, for more complicated groups $G$ this story remains very difficult. Even $G= \mathbb{Z}/p \int \mathbb{Z}/p$, which has a reasonably straightforward choice of $E \mathbb{Z}/p \int \mathbb{Z}/p = S^{\infty} \times (S^{\infty})^p$, is difficult to study by hand.

The second natural way to consider relations involves using the algebraic properties of the groups themselves. Indeed, the classical Adem relation is really a statement of combinatorics mixed with the fact that there is an injective map from $$H^*(B S_{p^2}; \mathbb{F}_p) \rightarrow H^*\left(B \mathbb{Z}/p \int \mathbb{Z}/p; \mathbb{F}_p\right).$$ In particular, suppose we are given a space consisting of elements of $\overline{\mathcal{M}}_{0,1+p^2}$, and further suppose that both $S_{p^2}$ and $\mathbb{Z}/p \int \mathbb{Z}/p$ act on this space. Then if there is some sum of elements of $\overline{\mathcal{M}}_{0,1+p^2}$ fixed by $S_{p^2}$ (or indeed a smooth manifold of fixed elements), then one can immediately apply an Adem style relation to the resulting equivariant quantum invariants.

As a final thought about abstract relations between such operations, one can ask the following question (which to the author's knowledge has not been explored): 
\begin{Question}
Does the ring structure of $H^*_G(A)$, for some space of domains $A$, say anything about the resulting operations? 
\end{Question}

\subsubsection{Finite-group localisation using cells}
\label{sec:partial-loc}
For the first type of relations we mentioned in Section \ref{subsubsec:methods-finding-relations}, in the case of using $G=\bZ/p$, we can often use the following simple yet quite powerful relation on equivariant homology:
\begin{proposition}[Partial localisation]
\label{proposition:partial-localisation}
Suppose that $\bZ/p$ acts on some space $X$ (take as some generator $g \in \bZ/p$) and $W \subset X$ is a $\bZ/p$-invariant closed manifold, with $A \subset W$ a $\bZ/p$-invariant closed submanifold of codimension $1$, with $U \subset W$ a closed submanifold of codimension $0$ such that $\partial  U = L$, and $$W = \bigcup_i g^i U.$$ Recall that we may represent generators of $H^{\bZ/p}_*(X; \bF_p)$ by elements of $C^{\text{cell}}_*(B\bZ/p;\bF_p) \otimes C^{\text{cell}}_*(X;\bF_p)$, and that the cells $\Delta_i$ from Example \ref{example:z-mod-p} generate $C_*^{\text{cell}}$ as a $\bZ/p$-module. Then the following holds:

$$[\Delta_i \otimes W] = [\Delta_{i+1} \otimes L].$$
\end{proposition}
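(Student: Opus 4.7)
The plan is to exhibit an explicit bounding chain in the double complex $C^{\text{cell}}_*(E\bZ/p) \otimes_{\bZ/p} C_*(X; \bF_p)$ whose boundary realises the asserted equality in equivariant homology. Recall that, under the standard $\bZ[\bZ/p]$-resolution underlying Example \ref{example:z-mod-p}, the internal boundary $d\Delta_{i+1}$ alternates (with parity of $i+1$) between $N \cdot \Delta_i$, where $N := 1 + g + \cdots + g^{p-1}$, and $(1-g) \Delta_i$. Since $W$ and $L$ are closed $\bZ/p$-invariant submanifolds, both $\Delta_i \otimes W$ and $\Delta_{i+1} \otimes L$ are genuine cycles in this complex (noting that $dW = 0$ as $W$ is a closed manifold, and similarly for $L$), so the statement is meaningful.

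The natural bounding chain to try is $\Delta_{i+1} \otimes U$. By Leibniz its boundary equals $d\Delta_{i+1} \otimes U \pm \Delta_{i+1} \otimes L$, using the hypothesis $\partial U = L$. In the parity where $d\Delta_{i+1} = N \cdot \Delta_i$, the tensor relation over $\bZ[\bZ/p]$ rewrites $N \Delta_i \otimes U$ as $\Delta_i \otimes \sum_j g^{-j} U$, and by the covering hypothesis $W = \bigcup_j g^j U$ together with the disjointness of interiors of the translates, this last sum equals $W$ as a chain in $C_*(X; \bF_p)$. Hence $d(\Delta_{i+1} \otimes U) = \Delta_i \otimes W \pm \Delta_{i+1} \otimes L$, giving the homology identity in the required sign (signs work out over $\bF_p$, noting $p=2$ trivialises any remaining concern).

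The other parity, where $d\Delta_{i+1} = (1-g)\Delta_i$, is the main subtlety, since a naive computation yields $\Delta_i \otimes (U - g^{-1}U)$ rather than $\Delta_i \otimes W$. My plan is to replace the bounding chain with the twisted sum $\Delta_{i+1} \otimes \sum_{j=0}^{p-1} j \cdot g^j U$; a direct mod-$p$ calculation shows that $(1-g)$ applied to this sum telescopes to $\sum_j g^j U = W$, thus reducing this case to the first. The remaining obstacle is tracking signs and orientation conventions carefully, since the odd cells $\Delta_{2k+1}$ of Example \ref{example:z-mod-p} are smooth only away from codimension-two corners and the left-versus-right action convention \eqref{equation:convention-for-g-action} needs to be respected on each tensor factor; the telescoping identity itself, however, is robust and gives the argument the structural symmetry needed to treat both parities uniformly.
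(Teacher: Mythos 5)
Your proof is correct and structurally the same as the paper's one-line sketch: both exhibit an explicit bounding chain of the form $\Delta_{i+1}\otimes (\text{something})$ for each parity. In the even-boundary parity you both use $\Delta_{i+1}\otimes U$; in the odd-boundary parity the paper uses $\Delta_{i+1}\otimes (g-1)^{p-2}U$ (exploiting the identity $(g-1)^{p-1} = N$ in $\bF_p[\bZ/p]$), whereas you use the telescoping element $\Delta_{i+1}\otimes \sum_{j=0}^{p-1} j\, g^j U$ — a different preimage of $N$ under multiplication by $(1-g)$, but one that works equally well and whose telescoping verification is arguably more transparent for a first read. One small thing worth making explicit in both arguments: the rewriting of $\sum_j g^j U$ as the chain $W$ uses that the translates $g^j U$ have pairwise disjoint interiors, which is not literally stated in the hypotheses of the proposition and should be assumed.
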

\begin{proof}[Sketch proof]
Depending on the parity of $i$, one considers the boundary of the cellular chain:
$$\Delta_{i+1} \otimes U$$ or $$\Delta_{i+1} \otimes (g-1)^{p-2} U.$$
\end{proof}

\begin{remark}
It should be noted: we call this ``localisation" to link it e.g. to localisation by Quillen in \cite{quillen} or Atiyah-Bott localisation. In particular, this provides a way to associate a $\mathbb{Z}/p$-equivariant homology class of an embedded submanifold (or pseudocycle, etc) with the $\mathbb{Z}/p$-equivariant homology class of a subset of codimension $1$ fixed by $\mathbb{Z}/p$. In some sense, repeated application should (and conjecturally always will) allow one to identify that with the homology class associated to the fixed-point set. Hence, this can be thought of as being akin to ``localisation". However, at time of publication, for the case of $\mathbb{Z}/p$ no proof exists in writing.
\end{remark}

\subsection{Covariant constantcy, and the quantum Cartan relation}

Let us now illuminate two foundational tools in the realm of quantum Steenrod operations (although one is implied by the other). To do so, we will use what we have previously discussed -- the fact that the operations in question are determined by some choice of cycle in $H_*^G(\mathcal{M}_{0,1+n}; \mathbb{F}_p)$, and Theorem \ref{theorem:bordisms} that tells us that two homologous cycles yield identical (homology level) quantum operations. 

Throughout this section, plenty of pictures will be used. Some of these pictures will be nodal spheres with marked points, but some will have lines between spheres, or lines leaving the marked points. The rule of thumb is this: given a picture, the underlying nodal sphere (i.e. element of $\overline{\mathcal{M}}_{0,1+n}$) is obtained by shrinking all line segments to a point. To be exact, you can replace any line segment between two spheres by a node, and any line segment that touches a sphere at exactly one end can be replaced by a marked point. These lines have at times been added to link the underlying domains with the operations themselves: as an example, given a domain drawn with lines at marked point $z_i$ ending at $\alpha_i$, this will mean ``the operation whereby the input is $\alpha_1 \otimes \dots \otimes \alpha_n$". We note that the correspondence between {\it special points} and {\it lines} in our diagrams represents the fact that we can generally replace every intersection point with a perturbed Morse flowline (of finite length if it is a node, of semi-infinite length if it is a marked point), in each situation.

We will note at the end of this section that the quantum Cartan relation can be deduced from the covariant constant condition. However, because the literature already contains proofs of the covariant constant condition for all primes $p$, and the quantum Cartan relation for $p=2$, it seems reasonable that here in this survey we will sketch the direct proof of the quantum Cartan relation for primes $p>2$, using partial localisation (interested parties can compare this to the proof in \cite{covariant}).

We will first need the operations $QSt_p$, where $n=p$, $G = \bZ/p$, hence $I=1$, and $A$ is a single point where $z_0 = 0$ and the $z_1,\dots,z_p$ are the $p$-th roots of unity. We have established that this is enough to define a family of operations parametrised by $$H^{\bZ/p}_*(\{A \} \times_{\bZ/p} E \bZ/p; \bF_p) \cong H_*(B \bZ/p; \bF_p),$$ which is $1$-dimensional in each degree $*$, generated by $\Delta_i$. Alternatively, by dualising $H_*(B \bZ/p; \bF_p)$, we obtain a single operation: $$QSt_p : H^*(X) \rightarrow H^*(X) \otimes H^*(B \bZ/p; \bF_p),$$ and recall that (for $p>2$) $H^*(B \bZ/p; \bF_p), \cong \bF_p [u,\theta]/(\theta^2)$, where $|u|=2, \ |\theta|=1$.

The total operation $QSt_p$ is a map between two rings. The quantum Cartan relation asks the following question: is it a ring homomorphism?

The answer is in fact {\it no}. The reason for this is as follows: recall from \ref{subsec:examples} that we know what are the operations $(x,y) \mapsto QSt_p(x*y)$ and $(x,y) \mapsto QSt_p(x)*QSt_p(y)$, in terms of equivariant homology classes. In particular, $n=2p$, $G=\bZ/p$ (hence $I=2$), and the underlying points describing them are two different points in $\overline{\mathcal{M}}_{0,1+2p}$. Unlike in standard homology, different points could yield different (families of) elements in $H_*^{\bZ/p}(\overline{\mathcal{M}}_{0,1+2p};\bF_p)$ (one can intuit this, via Quillen localisation \cite{quillen}, through the fact that the $\mathbb{Z}/p$-fixed point set might be disconnected).  Recalling, the points in question respectively are:

\begin{itemize}
    \item For $(x,y) \mapsto QSt_p(x*y)$, when $z_j$ and $z_{p+j}$ bubble off together in pairs for $j=1,\dots,p$, where the bubble of each such pair sits at $\zeta^j$ in a principal bubble (containing $z_0$) for some primitive root of unity $\zeta$. Call this point $m_1 \in \overline{\mathcal{M}}_{0,1+2p}$, and
    \item For $(x,y) \mapsto QSt_p(x)*QSt_p(y)$, when there is a principal sphere (containing $z_0$) with two other spheres connected at nodes, such that one sphere has $z_1,\dots, z_p$ at the roots of unity and one has $z_{p+1},\dots,z_{2p}$ at the roots of unity. Call this point $m_2 \in \overline{\mathcal{M}}_{0,1+2p}$.
\end{itemize} 

The first thing to note is that there is no reason to think these points are at all related. As it happens, let $W \subset \overline{\mathcal{M}}_{0,1+2p}$ consist of points whereby on some principal component, $z_0$ is fixed at $0$ and $z_1,\dots,z_p$ are at roots of unity, and then there is a secondary sphere attached at a node, which is freely moving on the principal component but fixed at $0$ on the secondary component, such that $z_{p+1},\dots,z_{2p}$ are at roots of unity on the secondary component. This, one can see, is a $2$-dimensional submanifold of $\overline{\mathcal{M}}_{0,1+2p}$ that is diffeomorphic to $S^2$, by assigning the position of the node in the principal component. In the construction of Section \ref{sec:sec-equivariant-quantum-operations}, this is what we called ``$A$". See Figure \ref{fig:W}.

\begin{figure}
    \centering
    \includegraphics[scale=0.7]{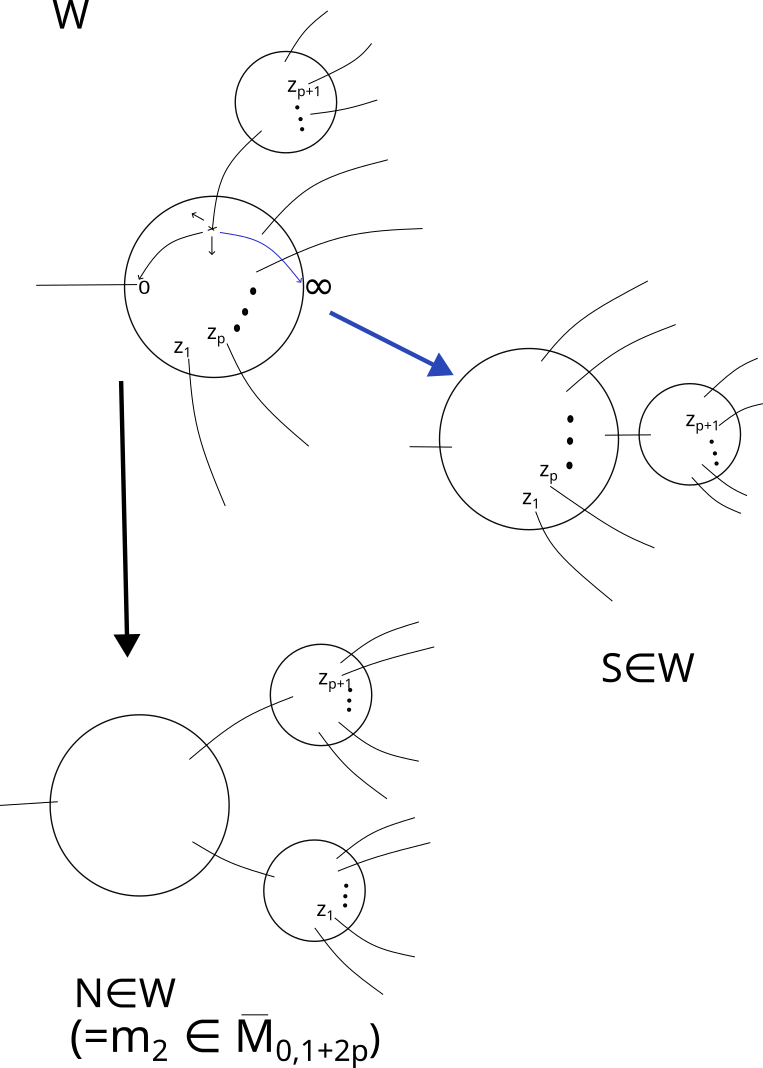}
    \caption{The space of domains $W$, and points $N,S \in W$.}
    \label{fig:W}
\end{figure}

Now observe that in $H_*^{\bZ/p}(W),$ there is a class for $i=0,1,2,\dots$ corresponding to each generator $\Delta_i \in H_{i}(B \bZ/p;\bF_p)$, with the generator associated to $i=0 $ being the fundamental class. Call these classes $[W]_i \in H_{2+i}^{\bZ/p}(W)$. There are also two families of classes associated to fixed points of the $\bZ/p$-action: one is associated to $0 \in S^2$, and one to $\infty \in S^2$.  Call these $0, \infty \in S^2$ respectively $N$ and $S$ (so in our previous notation, $[N]_i = [\{0 \} \otimes \Delta_{i}]$).

Finally, one can apply Proposition \ref{proposition:partial-localisation} twice (as an intermediate, one considers an arc of a great circle from $0$ to $\infty$ in $S^2$). One deduces that $$[W]_{i-2} = [N]_{i} - [S]_{i}.$$ (with signs depending on conventions). 

One notices that the $0 \in S^2$ corresponds to the point $m_2$, and further one observes that there is a $1$-dimensional family of $\bZ/p$-fixed points in $\overline{\mathcal{M}}_{0,1+2p}$ between the point $S$ and the point $m_1$: see Figure \ref{fig:Sequalsm1}. 

\begin{figure}
    \centering
    \includegraphics[scale=0.7]{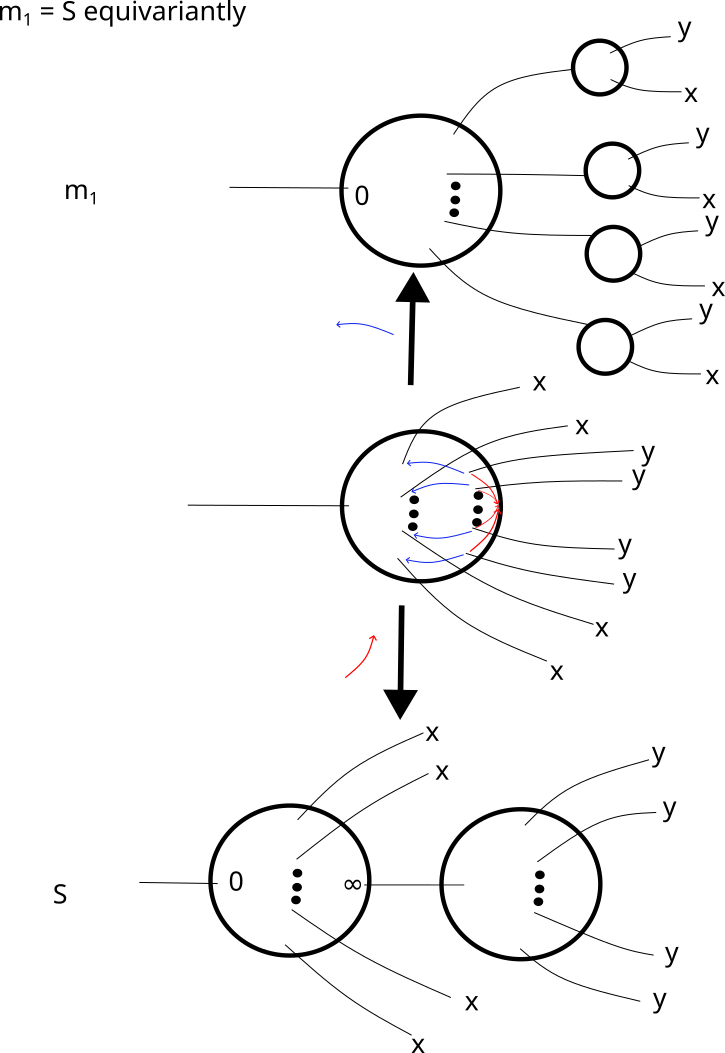}
    \caption{Proof-by-pictures that $S = m_1$ in $\overline{\mathcal{M}}_{0,1+2p}$.}
    \label{fig:Sequalsm1}
\end{figure}

One then considers a sequence of $1$-dimensional moduli spaces parametrised by chains $C_i$ such that $$dC_i = W \times \Delta_{i-2} - m_2 \times \Delta_{i} + m_1 \times \Delta_{i}.$$ By looking at the boundary of this $1$-dimensional moduli space, one deduces that there is a chain homotopy between the chain-level maps $$Q([W]_{i-2} \otimes (-)^{\otimes p}) - Q([m_2]_{i} \otimes (-)^{\otimes p}) + Q([m_1]_{i} \otimes (-)^{\otimes p}) \text{ and } 0.$$ Recalling that the $Q([m_2]_{i} \otimes (x)^{\otimes p})$  together form (up to sign) $QSt_p(x*y)$ and the $Q([m_1]_{i} \otimes (x)^{\otimes p})$  together form $QSt_p(x)*QSt_p(y)$, we thus obtain that $$(-1)^{\clubsuit} QSt_p(x*y) -  QSt_p(x)*QSt_p(y) = \sum_{ i=2j + \epsilon} Q([W]_i \otimes x^{\otimes p} \otimes y^{\otimes p}) u^j \theta^{\epsilon}.$$ We will not compute the signs here. It is worth noting that, even in the case of $\bC P^1$, this correction term $Q(W)$ can be nontrivial. 

Now we notice that the proof of the covariant constant condition in \cite{covariant} (which, to stick with the given theme, we will phrase in terms of elements of $H_*^G(\overline{\mathcal{M}}_{0,1+n})$) is essentially a slight modification of the above process. In particular, we are using $n=p+2$ instead of $2p$. We add a point $z_{p+1}$ at $\infty$ in the principal component at all points in the above, and in $W$ we replace the non-principal component with a single marked point  $z_{p+2}$. See Figure \ref{fig:constantcy}.

\begin{figure}
    \centering
    \includegraphics[scale=0.7]{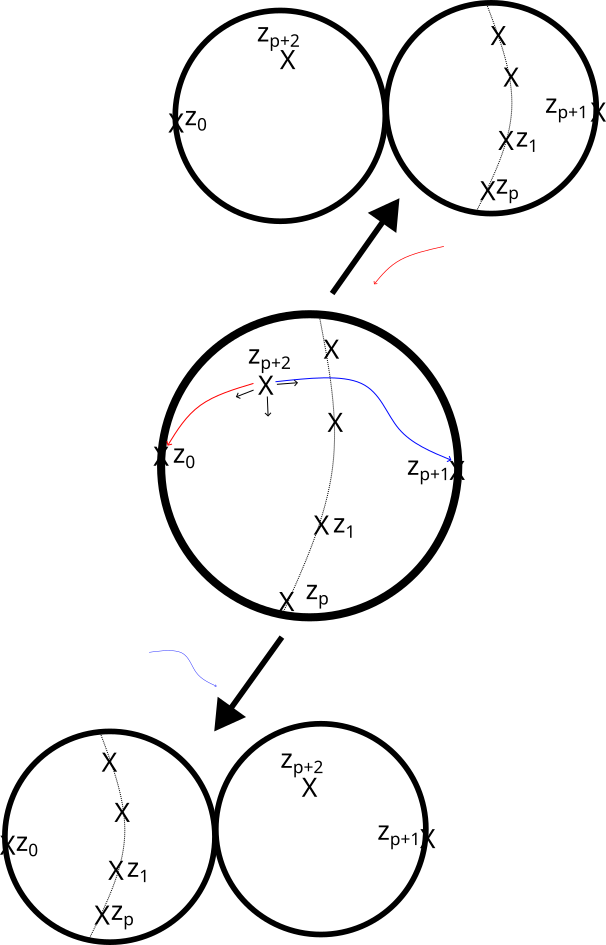}
    \caption{The domains we consider to prove covariant constantcy.}
    \label{fig:constantcy}
\end{figure}

In that instant, by considering what operations we obtain, one sees that we get that (appealing once more to Section \ref{subsec:examples}), neglecting to calculate signs, \begin{equation} \label{equation:covariant} Q\Sigma_{\alpha}(b*c) - (-1)^{\clubsuit} b * Q\Sigma_{a}(c) = Q \Pi_{a,b}(c).\end{equation} Hence, both of these relations rely upon the same underlying result in general $\bZ/p$-equivariant homology. Further, one can use the covariant constant relation \eqref{equation:covariant} to recover the quantum Cartan relation, as follows.

Firstly, one observes that $Q \Sigma_{\alpha_1 * \alpha_2} = Q \Sigma_{\alpha_1} \circ Q \Sigma_{\alpha_2}$: see Figure \ref{fig:multiplicativityiscomposition}. Next, one applies \eqref{equation:covariant} with $a = x$, $b = QSt_p(y)$ and $c = 1$. Then one obtains $$\begin{array}{ll} Q St_p (x * y) & = Q \Sigma_{x*y}(1) \\ &= Q \Sigma_{x}(Q \Sigma_{y}(1)) \\ &= Q \Sigma_x(Q St_p(y) * 1) \\&= QSt_p(y) * Q\Sigma_x(1) + Q \Pi_{x,QSt_p(y)}(1) \\ &= QSt_p(y) * QSt_p(x) +  Q \Pi_{x,QSt_p(y)}(1).\end{array}$$ The final thing to notice is that $Q \Pi_{x,QSt_p(y)}(1)(- \otimes \Delta_i) = Q([W]_i)(x,y)$, in the language above.
\begin{figure}
    \centering
    \includegraphics[scale=0.7]{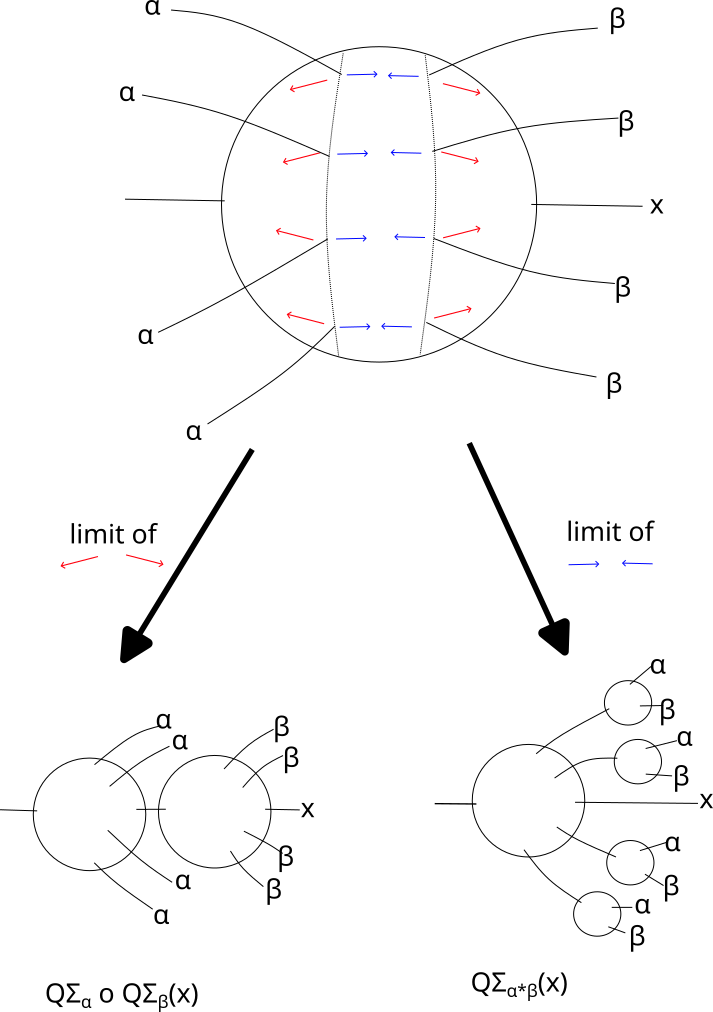}
    \caption{A demonstration that multiplicativity intertwines with composition of $Q\Sigma$.}
    \label{fig:multiplicativityiscomposition}
\end{figure}

\subsection{Quantum Adem relations}
\label{subsec:qu-ad-rel-def}
We will aim to be brief regarding the quantum Adem relations, as they are (at least in the author's previous work) technical and quite messy. However, we will summarise the main points. We notice that :

\begin{point}
\label{point:point1}
Given any $M$ equipped with a $G$-action, and the $G$-space of domains $A \subset \overline{\mathcal{M}}_{0,1+p^2}$, and $X \in H_*^G(A)$. Suppose that $H \subset G$ is a subgroup with index coprime to $p$. Then $X$ lifts to $\tilde{X} \in H_*^H(A)$ (i.e. $X = \pi_* \tilde{X}$, where $\pi: EG \times_H M \rightarrow EG \times_G M$), there is a commutative diagram:

\begin{equation}\label{classicalademdiagram}
\xymatrix{
H^*(M)
\ar@{->}^-{Q^G(-\otimes X)}[r]
\ar@{->}_-{=}[d]
&
H^*(M)
\ar@{->}^-{=}[d]
\\
H^*(M)
\ar@{->}_-{Q^H(- \otimes \tilde{X})}[r]
&
H^*(M)
}
\end{equation}

Here we have modified the notation of Definition \ref{definition:eq-operations} to include a superscript denoting which group, $G$ or $H$, to which we are referring, e.g. $Q^G$ or $Q^H$.
\end{point}

The importance of this point is as follows: $H$-equivariant operations filter through $G$-equivariant operations, in certain cases. It is critically important here that the index of $H$ in $G$ is coprime to $p$, which in turn means that the map $\pi_*$ is surjective. 

\begin{point}
\label{point:point2}
Further, Point \ref{point:point1} (and a little more work, see \cite[Section 7]{wilkins18}) implies that under the related (dualised) definition of operations, the following diagram commutes.

\begin{equation}\label{classicalademdiagram-2}
\xymatrix{
H^*(M)
\ar@{->}^-{Q^G}[r]
\ar@{->}_-{=}[d]
&
H^*(M) \otimes H^*_G(A)
\ar@{->}^-{id \otimes \pi^*}[d]
\\
H^*(M)
\ar@{->}^-{Q^H}[r]
&
H^*(M) \otimes H^*_H(A)
}
\end{equation}
\end{point}

We can, in certain cases of $A$ (e.g. the case we use below) use the notion of ``reduced operations" from Section \ref{subsec:additivity} to make things cleaner. This is just to justify the notation in \cite[Section 7]{wilkins18}. Next:

\begin{point}
\label{point:point3}
One needs to prove a combinatorial relation (for the $G=\mathbb{Z}/2$ case, see \cite[Lemma 7.1]{wilkins18}). 
\end{point}

Finally, we need to demonstrate the following claim:

\begin{point}
\label{point:point4}
Any time we have a commutative diagram as in Point \ref{point:point2}, we get a combinatorial relation (which uses Point \ref{point:point3}) between the terms of $Q^H$.
\end{point}

This point is basically an extension of \cite[Lemma 7.2]{wilkins18} (the cited reference being $p=2$ case). 

We now specialise the situation we are considering. In the case of the classical Steenrod squares, the space of domains $A$ consists of the single point, a graph with $1+p^2$ univalent vertices and a single $1+p^2$-valent vertex, connected to each univalent vertex by a single edge. We thus use $G := S_{p^2}$ and $H := \mathbb{Z}/p \int \mathbb{Z}/p$.

\begin{point}
A space of domains in $\overline{\mathcal{M}}_{0,1+p^2}$ that is fixed by $S_{p^2}$ consists of the following: let $l_1$ be the arrangement of $1+p$ spheres, attached such that there is a central sphere $S(0)$ connected to each of the others $S(1),\dots,S(p)$ with $e^{2k i\pi / p} \in S(0)$ connected to $0 \in S(k)$, and $1+p^2$ marked points $z_0,z_1,\dots$ such that $z_0 = 0 \in S(0)$ and $z_{(j-1)p+1},\dots, z_{jp}$ are associated with the roots of unity on $S(j)$. Pick coset representatives $1,g_2,\dots$ of $\mathbb{Z}/p \int \mathbb{Z}/p \subset S_{p^2}$, where the group $\mathbb{Z}/p \int \mathbb{Z}/p$ is without loss of generality generated by the cycles $((j-1)p+1,(j-1)p+2,\dots,jp)$ for $j=1,\dots,p$ and the cycle $(1,p+1,2p+1,\dots,(p-1)p+1)(2,p+2,2p+2,\dots,(p-1)p+2)\dots(p,2p,3p,\dots,p^2)$ and let $l_j = g_j \cdot l_1$ (acting by permuting the indices). Then $A = \sqcup_i \{ l_i \}$ has an $S_{p^2}$-action.
\end{point}

Hence, by all of the above, there is a combinatorial relation on the $H$-equivariant operation determined by this $G$-action. Further, $\{ l_1 \}$ is the single $H$-fixed point in the space of domains $\overline{\mathcal{M}}_{0,1+p^2}$ that, when considered as a (family of) $\mathbb{Z}/p \int \mathbb{Z}/p$-equivariant operation, defines the composition of two quantum Steenrod operations $QSt \circ QSt$. 

Hopefully it is clear that Point 1 and Point 2 is a very general fact about $H$-equivariant operations that lift to $G$-equivariant operations, and not a fact specifically about compositions of Steenrod squares. Points 3 and 4 are possibly more specific to our situation, although in reality they too might be quite general. In particular, the only distinction between the composition of the quantum and the classical Steenrod squares is the following: in the former case, the domain $l_1$ corresponding to composition of quantum Steenrod powers (i.e. Point 5) does not, by itself, lift to a $S_{p^2}$-invariant space of domains. Therefore, we obtain quantum correction terms in the Adem relations (by adding the cosets of $l_1$ under the $S_{p^2}$-action).

\begin{remark}
\label{remark:adem-example}
Setting this example for easiest case of quantum Steenrod squares, it would require pulling $QSt \circ QSt$ back to become a relation over $S_4$. However, here one runs into a problem because this is not the case. One can see this through the lens of the equivariant symplectic operad principle. We would like to say the following: to consider maps over $S_4$, one would like to consider the $S_4$-equivariant homology of Deligne-Mumford space. The domain of $QSt \circ QSt$ is the $1+4$-pointed nodal sphere, consisting of marked points $z_0$ on a central sphere, which is attached to two spheres, one with $z_1$ and $z_2$, the other with $z_3$ and $z_4$. Then this is a fixed point under the $D_8 = \langle (12), (34), (13)(24) \rangle$-action, hence determines a collection of cycles in $H_*^{D_8}(\overline{\mathcal{M}}_{0,1+4})$. This point is not, however, fixed under the $S_4$-action, and so does not necessarily determine a cycle in $H_*^{S_4}(\overline{\mathcal{M}}_{0,1+4})$.

However, the sum of (the $D_8$-relation) $QSt \circ QSt$ and some other operation {\bf can} be lifted to $S_4$. The other operation consists of the sum of the points $a$ and $b$, which are domains in $\overline{\mathcal{M}}_{0,1+4}$ where respectively $(z_1,z_2)$ comes together with $(z_3,z_4)$ and $(z_1,z_3)$ comes together with $(z_4,z_2)$. Compare with the final Point above. Note that neither of $a$ nor $b$ defines a $D_8$-equivariant cycle, but the two together do. Similarly, the underlying domain of $QSt \circ QSt$ union with $a$ and $b$ defines a $S_4$-equivariant cycle, but no subset does likewise.

This $D_8$-equivariant operation (using the union of the domains $a$ and $b$), provides a correction term to the naive quantum Adem relations, and it is provably nonzero using a calculation on $\mathbb{CP}^2$ (see \cite[Example 1.6]{wilkins18}, which we prompt the reader of in Section \ref{subsec:ad-examples}). 
\end{remark}

\section{Quantum Steenrod powers and quantum Adem relations - calculations}
\label{sec:calculations}
We now present the methods of calculating equivariant quantum operations as detailed in \cite{wilkins18}, and \cite{covariant}. In particular, with the terminology as in the beginning of Section \ref{sec:eq-qu-op}, we take $n=p$ prime, $G= \bZ/p$, and $A$ to comprise of the points in $\mathcal{M}_{0,1+p}$ where $z_0=0$ and $z_1,\dots,z_p$ are the $p$-th roots of unity. We can see that these are all of the $p-1$ (up to reparametrisation) fixed points of $\mathcal{M}_{0,1+p}$ under the $\bZ/p$-action.

Then we can construct operations as in Definition \ref{definition:eq-operations}, one for each point $a \in A$, and it can be seen that for some choice of point in $A$ the resulting operations $QSt_p$ defined by $QSt_p(x) = Q([a] \otimes x)$ is the quantum Steenrod $p$-th power as in \cite{covariant} (up to a sign).

Importantly, because $E \bZ/p \times_{\bZ/p} A \cong (B \bZ/p)^{p-1}$, we may use Example \ref{example:z-p-additive} to conclude that these operations are additive.

\begin{remark}
Using the results in \cite{kim}, we observe that the operation above contains most of the information associated with the operation using $A=\mathcal{M}_{0,1+p}$, except for non-equivariant Gromov-Witten invariants. 
\end{remark}

\subsection{Calculating these operations}

Considering that these operations often involve making a generic choice of $S^{\infty}$-dependent Morse function, it would seem at first glance that these sort of operations are difficult to calculate. However, it actually turns out that we can get quite far by leveraging Proposition \ref{proposition:partial-localisation} in context. We will demonstrate the most powerful computational tool, the covariant constancy relation, although we remark here that in the case of toric varieties one can use the so-called Cartan relations to similar effect (the proof of the quantum Cartan relations being functionally identical to the proof of covariant constancy, and indeed the former being a consequence of the latter).

The statement of the covariant constant relation is:
\begin{equation}\label{equation:covariant-constant} Q\Sigma_{a}(x * \omega) -\omega* Q\Sigma_{a}(x) = d_{\omega} Q \Sigma_{a} x,\end{equation} for any $\omega \in H^2(M).$ The reason why this is so useful is the following algorithmic procedure: 

\begin{itemize}
    \item We begin with $M$ monotone, so choose $\omega = c_1$.
    \item Suppose that we restrict attention to spheres with $c_1 = \lambda$.
    \item Then we get the equation:
    $$\sum_{\mu_1 + \mu_2 = \lambda} Q\Sigma^{\mu_1}_{a}(x *_{\mu_2} \omega) -\omega *_{\mu_1} Q\Sigma^{\mu_2}_{a}(x) = \lambda Q \Sigma^{\lambda}_{a} x,$$ where here $*_{\mu}$ is the quantum product using spheres of Chern $\mu$, and $Q \Sigma^{\lambda}$ is the operation $Q \Sigma$ using curves of Chern $\lambda$.
    \item We notice that $$\sum_{\mu_1 + \mu_2 = \lambda} Q\Sigma^{\mu_1}_{a}(x *_{\mu_2} \omega) = Q\Sigma^{\lambda}_{a}(x \cup \omega) + \sum_{\mu_1 + \mu_2 = \lambda, \ \mu_1 < \lambda} Q\Sigma^{\mu_1}_{a}(x *_{\mu_2} \omega).$$
    \item We use \eqref{equation:covariant-constant} with $x$ replaced by $x \cup \omega$, but still restricting to spheres of Chern $\lambda$. If $p \nmid \lambda$ we can write over $\bF_p$ that: $$  Q \Sigma_{a} (x \cup \omega) = \tfrac{1}{\lambda} \bigl(   Q\Sigma_{a}(x \cup \omega * \omega) -\omega* Q\Sigma_{a}(x \cup \omega) \bigr).$$
    \item Now we iterate this process, observing the following: we iteratively obtain terms of the form: $\tfrac{1}{\lambda^j} Q\Sigma^{\lambda}_a (x \cup \omega \cup \dots \cup \omega)$, where the cup product is taken $j+1$ times. Notice that eventually this terminates, once $j$ is sufficiently large.
    \item Repeat this by noticing $$\sum_{\mu_1 + \mu_2 = \lambda} \omega *_{\mu_1} Q\Sigma^{\mu_2}_{a}(x) = \omega \cup Q\Sigma^{\lambda}_a(x) + \sum_{\mu_1 + \mu_2 = \lambda, \ \mu_1 > 0} \omega *_{\mu_1} Q\Sigma^{\mu_2}_{a}(x).$$ 
    \item After sufficiently many iterations, we obtain $\lambda Q \Sigma^{\lambda}_a(x)$ in terms of $Q \Sigma^{\mu}_a(y)$ where $\mu < \lambda$ and $y$ is obtained from $x$ by quantum multiplication. Considering that $Q \Sigma^0_a(x)$ is the classical Steenrod square, this means that if we know (A) the Steenrod square and (B) the quantum cohomology, we can inductively compute $Q \Sigma^{\lambda}_a(x)$ for every $a$, $x$ and $\lambda < p$. 
\end{itemize}

 This follows a general trend, which one can see going all the way back to our discussion wherein operations are determined from equivariant cohomology of the domains of our moduli spaces: in some sense, $p$-fold covered curves (which might occur once we allow Chern number $p$) provide all of the ``interesting" contributions to the $p$-th quantum Steenrod power operation, because their calculation inherently involves some equivariant geometry (namely the normal bundle of the space of multiply covered curves). Contrast this to spheres of Chern $<p$ where, in essence, everything is determined by the equivariant algebraic topology and the nonequivariant quantum cohomology.

In Section \ref{subsec:jhlee-mcc}, we provide a brief overview of the work by J. H. Lee. The referenced work is the first calculation of some non-trivial multiply covered quantum Steenrod contributions (and the only such calculation, to the knowledge of the author, at time of writing).

\begin{task}
Calculate the contributions to some Steenrod $p$-th power arising from $p$-fold covers of spheres.
\end{task}

Of course, in the monotone case, $p$-fold covers are noticeable by their absence because we exclude them from our moduli spaces. Even so, they have an impact on the equivariant cohomology. The question is, what impact do they have?

\subsection{A final word on the quantum Adem relations}
\label{subsec:ad-examples}

An interesting line of work involves the so-called quantum Adem relations; we only have a proof of the quantum Adem relations for the quantum Steenrod square, but the proof itself should pass through (with a suitable factorial rise in complexity).

\begin{figure}
    \centering
\includegraphics[scale=0.7]{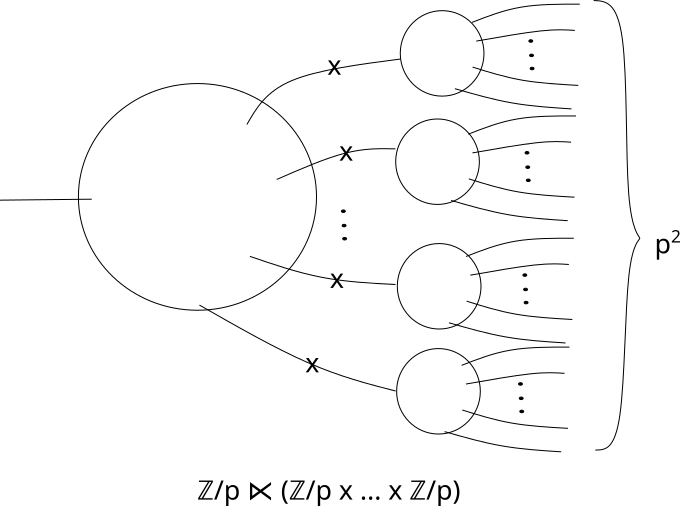}
    \caption{The domain and group action used to define $QSt_p \circ QSt_p$.}
    \label{fig:domain-and-group-action}
\end{figure}

Classically, the Adem relations deal with what happens if one composes Steenrod operations. Trying to distill further Section \ref{subsec:qu-ad-rel-def}, the composition of Steenrod squares, $Sq \circ Sq$, is naturally an operation over $D_8 = \bZ/2 \int \bZ/2$ (and similarly for Steenrod $p$-th powers over $\bZ/p \int \bZ/p$).  See Figure \ref{fig:domain-and-group-action} for the domains. However, because $Sq \circ Sq$ also naturally exists as an operation over $S_{p^2}$, and bearing in mind that $H^*(B S_{p^2}) \xhookrightarrow{} H^*(B (\bZ/p \int \bZ/p))$, this inclusion alone provides constraints to what format this operation can take.

\begin{Question}
The Adem relations for the quantum Steenrod square come about because of the inclusion homomorphism $D_8 \rightarrow S_4$. Are there other group homomorphisms $\phi: H \rightarrow G$ that determine an interesting constraint on $H$ or $G$ operations?
\end{Question}

To see an example of quantum Adem relations whereby the correction term (in the language of Remark \ref{remark:adem-example}) associated to $a+b$ is nontrivially, one does not need to go further than $\bC P^2$. We will not replicate the example here, because the calculation involves defining some technical language that is not particularly enlightening. However, firstly we recall the standard Adem relations on the Steenrod squares $Sq$. 

				Given $x \in H^{*}(M)$ and $\alpha,\beta>0$ such that $\beta<2\alpha$,

				\begin{equation} \label{equation:classicalademrelations} Sq^{\beta}Sq^{\alpha}(x) = \sum_{k=0}^{[\beta/2]} {{\alpha-k-1}\choose{\beta-2k}}Sq^{\alpha+\beta-k} Sq^{k}(x).\end{equation}

Now observe that there is a natural definition of $QSt_2^{\beta}$ and $QSt_2^{\alpha}$ (i.e the supercript $\alpha$ refers to using as parameter space $S^{|x|-\alpha} \subset S^{\infty} = E \bZ / 2$). Similarly, there is a natural guess for the quantum version of the Adem relations (replacting $Sq$ with $QSt_2$ above). 

However, in the case of $\bC P^2$, using $x \in H^2(\bC P^2;\bF_2)$ as the generator and $\alpha = \beta = 2$, the natural extension of \eqref{equation:classicalademrelations} is incorrect by a factor of $q^N$ (the quantum variable raised to the minimal Chern number). For those wanting to fully understand the terminology, \cite{wilkins18} contains the relevant excruciating detail.

As a side-note, this is interesting viewed through the lens of finite-group localisation (as in e.g. Quillen \cite{quillen}): in particular, we know that for cyclic groups, the equivariant cohomology of a space is equal (up to torsion) to the fixed point set. Some might ask the question of whether this result holds for other groups. Here we see the fixed point set of $\overline{\mathcal{M}}_{0,1+4}$ under the $D_8$-action is a single point (the underlying domain of $QSt \circ QSt$), which defines a family of elements of $H_*^{D_8}(\overline{\mathcal{M}}_{0,1+4}; \bF_2)$, corresponding to $QSt \circ QSt$. However, the sum of points $a+b$ also determines a nontrivial element in $H_*^{D_8}(\overline{\mathcal{M}}_{0,1+4}; \bF_2)$. We do not know if this operation can be computed through Steenrod power operations, if it is torsion, or if neither are true. If neither is true, then we have essentially demonstrated that there is no localisation result for $D_8$ (not, of course, that we would expect one in the first place).

\section{Floer invariants}
\label{sec:sec-floer-invariants}

We have so far discussed equivariant operations on quantum cohomology, $QH^*(M)$. However, it is well known that the PSS map of \cite{PSS} yields a map $PSS: QH^*(M) \rightarrow HF^*(H)$ for any Hamlitonian $H: M \rightarrow \mathbb{R}$ (we will omit the definition of $HF^*(H)$, directing readers towards e.g. \cite{hofersalamon}. In general, to digest this section a reader would be expected to be reasonably confident with Floer theory). In certain cases, such as when $M$ is compact or $H$ is $C^2$-small, this $PSS$ map is an isomorphism. So it is natural to ask how these equivariant quantum operations interact with the PSS map.

Indeed, after the initial work by Fukaya \cite{fukaya}, it was such Floer operations that were first studied in greater depth, in \cite{seidel}. There, Seidel defined an operation analogous to the Steenrod square, but for fixed-point Floer cohomology (which we will also not define here). In \cite{shelukhin-zhao} this was extended to general Steenrod powers, and in \cite{wilkins2} this was connected (for $p=2$) to the quantum Steenrod square under an equivariant PSS map (in upcoming work, joint between this author and E. Shelukhin, a similar result to \cite{wilkins2} will be proven for general $p$).

We will explore the cited work in a small amount of detail in Section \ref{subsec:fpfloer}, but for now we will discuss fundamental structural differences between the previous work and similar ideas in Floer theory.

When looking at $G$-equivariant Floer theoretic operations, there is an added complication: this is because we can think of the space of domains (which is what we are interested in, by the equivariant symplectic operadic principle) as being built over Deligne-Mumford space with marked points and a choice of direction -- for the point-at-infinity of the cylindrical end -- at each marked point. This means that the correct way to define equivariant operations in our usual way is to build them over the appropriate $(S^1)^{1+n}$-bundle over $\mathcal{M}_{0,1+n}$. What one finds, however, is that this adds a particular rigidity to our construction that is unavoidable in general. For example, the quantum Cartan relation fails to hold in the same way in the case of Floer theory, specifically because the underlying chain relation does not lift from $H_*^{\bZ/p}(\mathcal{M}_{0,1+n})$ to a chain-level relation on this $(S^1)^{1+n}$-bundle. As a brief aside, there \textit{is} a quantum Cartan relation, but it is fundamentally weaker than that in the quantum case, being more a relationship on the level of modules than rings.

All is not lost, however. One can certainly define equivariant symplectic operations in the same way, and using this augmented space of domains. Indeed, much the same, any set of domains fixed under this $G$-action will determine a symplectic operation. One must just be aware that, due to the $S^1$ at each marked point, there are now {\it fewer} fixed sets, hence less operations. 

We will not define these operations in general here in this survey, due to the number of requisite technical details. However, we will observe the following important points.

\subsection{Space of domains}

Following everything we have done so far, including the explicit construction in Section \ref{sec:sec-equivariant-quantum-operations}, we see that the important information one needs to consider when defining an equivariant invariant is the space of domains. It is reasonably tricky to see how a general permutation group $G$ might act on the aforementioned $(S^1)^{1+n}$-bundle over $\mathcal{M}_{0,1+n}$. The author does not have a solution for this in general, although the cyclic bar construction might be useful in this. 

In the instance where $G$ is a cyclic group, and when we restrict the $(S^1)^{1+n}$-bundle to some $X \subset \mathcal{M}_{0,1+n}$ such that the generator of this cyclic group acts via a biholomorphic map, then we can define the action on the bundle-restricted-to-$X$ to be the differential on of this holomorphic map at each of the $S^1$-fibres. 

As such, in the case where we want to define analogues of the Steenrod $p$-th power operation for Floer cohomology, the appropriate point in the $(S^1)^{1+n}$-bundle is the following: the group is $G = \mathbb{Z}/p$. The base in $\mathcal{M}_{0,1+n}$ consists of the first point at $0$, and the rest at roots of unity in some order (just as for defining the Steenrod power operations). We observe that the generator of $\mathbb{Z}/p$ acts via a rotation $R$, and so we fix an asymptotic point for the $0$ and $1$ vertices, the latter being say $\theta_1$, and then define the asymptotic point $\theta_i$ at the $i$ vertex to be $\theta_i = d R^i (\theta_0)$.

Note however that this is {\it not} a fixed point, because $dR$ will act on the asymptotic point at $0$. At first this may be concerning, but we notice that $dR^p = Id$, and hence this gives us a way to fix things: by changing the codomain of the Floer Steenrod $p$-th power operation, as we shall see in the next section.

\subsection{Codomain of these operations}

Recall for example quantum $G$-equivariant operations on a manifold $X$, which always land in $$H^*(BG \times X) \cong H^*(BG) \otimes H^*(X).$$ Because of the fact that $dR$ acts nontrivially on the ``outgoing" asymptotic point for the vertex labelled $0$, in order to define the Floer Steenrod $p$-th power operation, we will need that the codomain of the Floer Steenrod $p$-th power operation is some analogue of a homotopy quotient (i.e. analogous to $H^*(EG \times_G X)$ with a nontrivial action of $G$ on $X$). In fact, the operation has the following domain and target:
\begin{equation} \label{equation:pst} \mathcal{P}: H^*_{\mathbb{Z}/p}((CF^*(H))^{\otimes p}) \rightarrow HF^*_{ eq}(p \cdot H),\end{equation} for any Hamiltonian $H$. Here, on the left hand side we use the $\mathbb{Z}/p$-action that cyclically permutes the $CF^*(H)$ in the tensor product. On the right hand side is a version of $\mathbb{Z}/p$-equivariant homology for Floer theory. We will not give a full rigorous definition, but we will say that:
\begin{enumerate}
    \item the chains are $CF^*(p \cdot H)[[u]] \otimes \Lambda(e),$ where $u$ and $e$ should be compared to the generators of $H^*(B \mathbb{Z}/p; \mathbb{F}_p)$ for $p>2$, and we compare to classical equivariant chains, we have replaced polynomials over $u$ with power series over $u$,
    \item the differential is of the form $d_{eq} = d_0 + d_1 + \dots$, an infinite sequence of linear maps (well defined because we work over power series in $u$), where $d_0$ is the regular Floer differential and $d_1$ is an approximation to the second term in $d_{\mathbb{Z}/p}$, see Section \ref{subsubsec:equiv-cohom}.
\end{enumerate}

The reason for using this version of equivariant cohomology, and not the standard $\mathbb{Z}/p$-equivariant cohomology, is as follows: in general our defining auxiliary data $(J,H)$ must not have any sort of symmetry with respect to the $p$-legged pair-of-pants across which $J$ and $H$ vary (as this would break regularity). However, in order to define $HF^*_{ eq}(p \cdot H)$ as $HF^*_{\mathbb{Z}/p}(\text{some chain complex})$, we would like to use the chain complex $CF^*(p \cdot H)$ (it is the only 'obvious' option). Yet we necessarily cannot allow the almost complex structure to have a $\mathbb{Z}/p$-symmetry while remaining regular, and therefore $\mathbb{Z}/p$ cannot act on the holomorphic curves used to define the differential of $CF^*(p \cdot H)$. In particular, there is not a $\mathbb{Z}/p$-equivariant differential, which we would need to define standard $\mathbb{Z}/p$-equivariant cohomology.

We will note that one can go a small step further, using the fact that the map $$\text{power}: C \rightarrow C^{\otimes p}_{\mathbb{Z}/p},$$ $$\text{power}(c) = c \otimes \dots \otimes c,$$ is well defined on homology, and thence one can define $$\mathcal{P}St_p = \mathcal{P} \circ \text{power}.$$ Unlike $QSt_p$, for example, these $\mathcal{P}St_p$ may not have  nice properties such as additivity.

\begin{remark}
This gives us a potential way to define equivariant operations more generally: if for example we continue to use  $dR$, but if it acts such that $\theta_1 \neq dR(\theta_0)$, then the domain of such an equivariant operation is not just $H^*_{\mathbb{Z}/p}((CF^*(H))^{\otimes p})$ using cyclic permutation, but rather $H^*_{g}((CF^*(H))^{\otimes p})$ using an action $g$ that does not just permute the $CF^*(H)$ terms but also rotates them. However, this is speculative and has not been greatly explored.
\end{remark}

\subsection{Relation to quantum invariants}

In the case where $G = \mathbb{Z}/p$, for small Hamiltonians $H$ there is similarly an operation $PSS_{eq}$ generalising the PSS-map, from equivariant quantum cohomology $QH^*(M) \otimes H^*(BG)$ to Floer cohomology $HF^*_{eq}(H)$. Indeed, the arguments used in the non-equivariant case demonstrate that this must be an isomorphism. In this world, ``the choice of asymptotic point at $0$" somehow looks like it sits on the boundary of an ``equivariant disc", and using e.g. localisation, see Section \ref{sec:partial-loc}, one sees that this is the same as sitting at the centre of said disc (with a shift in equivariant degree). For those familiar with $\mathbb{Z}/p$-equivariant homology, this is the intuition as to why this equivariant PSS-map generally intertwines equivariant quantum and equivariant symplectic invariants. See Figure \ref{fig:pssintertwine}. The result itself, which will appear in work \cite{shelwilk} by E. Shelukhin and this author (and is proved for $p=2$ in \cite{wilkins2}), is: $$ \mathcal{P} St_p \circ PSS = PSS_{eq} \circ QSt_p.$$

\begin{figure}
    \centering
\hspace*{-1cm}\includegraphics[scale=0.65]{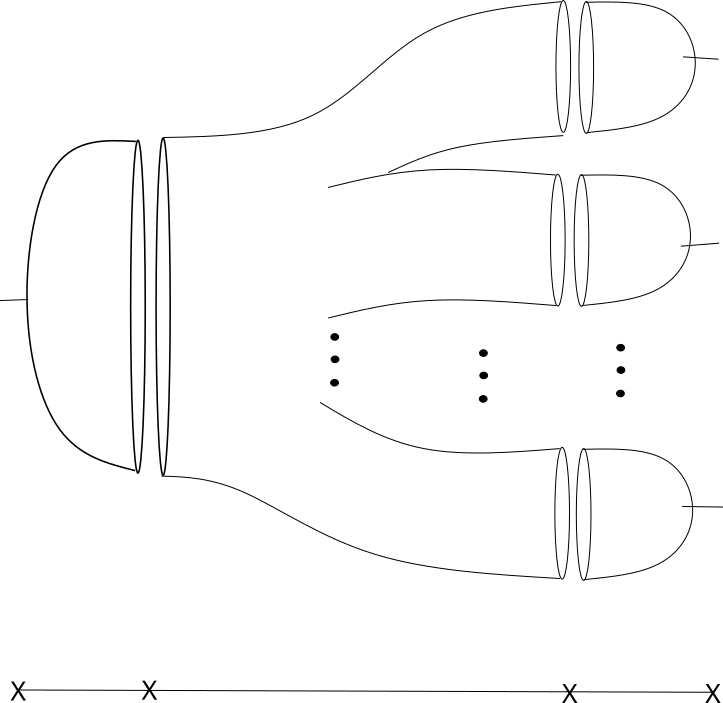}
    \caption{The PSS and equivariant PSS intertwining relation.}
    \label{fig:pssintertwine}
\end{figure}

\section{Recent (as of 2024) work by other authors}
\label{sec:recentwork}

\subsection{Pseudorotations}
\label{pseudorotations}
First we will define (mod-$2$) pseudorotations. We will initially use the definition \cite[Definition 1.1]{cgg}. We are working with $\mathbb{F}_2$-coefficients here, and we will be using the classifying space of $\mathbb{Z}/2$, which has homology $H^*(B \mathbb{Z}/2; \mathbb{F}_2) \cong \mathbb{F}_2 [h]$, where $|h| = 1$.

A Hamiltonian diffeomorphism $\phi$ of a closed symplectic manifold is called a {\it mod-$2$ pseudorotation} if for all $N \in \bZ_{\ge 1}$, the map $\phi^N$ is nondegenerate and the Floer differential (with $\bF_2$-coefficients) vanishes for $\phi^N$. 

One is often interested in the spectral invariants of a Hamiltonian diffeomorphism. Given some filtered chain complex $(C,\mathcal{A})$ (where $C$ is the chain complex and $\mathcal{A}$ is the function giving the filtration), one can define for $\alpha \in H^*(C)$ the spectral invariant $$c(\alpha,(C,\mathcal{A})) := \inf \{a \in \mathbb{R} | \alpha \in \text{im}(H(C)^{<a} \rightarrow H(C)) \} \cup \{ \infty \}.$$ In our case, we are interested in the Hamiltonian Floer cohomology associated to some Hamiltonian diffeomorphism $\phi$, and $\mathcal{A}$ is the usual action functional. One can think of it as the ``first action level at which the cohomology class $\alpha$ appears". 

Importantly, one can also define a spectral invariant $\hat{c}$ for the filtered equivariant Floer cohomology. The crucial result for the following subsubsections is as follows: the equivariant pair-of-pants product, \eqref{equation:pst}, is not just an operation on from $HF^*_{\mathbb{Z/p}}$ to $HF^*_{eq}$, but rather from {\it filtered} $HF^*_{\mathbb{Z}/p}$ to {\it filtered} $HF^*_{eq}$. In particular, we know something about the action values of the outputs relative to the inputs. 

Note that, in order to facilitate a survey of 

\subsubsection{Approach 1 -  Shelukhin}

The work in the papers by Shelukhin is the starting point of the notion of ``$\mathbb{Z}/2$-Steenrod uniruled". 

 Consider $PD(\text{pt})$, a generator of $H^{2n}(M^{2n})$. Then, the condition $$``QSt(PD(\text{pt})) \neq h^{2n} PD(\text{pt})",$$ (i.e. the quantum Steenrod square of the highest degree cohomology class is perturbed) is known as ``$\mathbb{Z}/2$-Steenrod uniruled", to distinguish from the traditional notion of uniruled (uniruled being that any point has a holomorphic curve through it: notice the similarity between this notion and $QSt(PD(\text{pt})) \neq h^{2n} PD(\text{pt})$).

 The statement is then as follows: if $(M^{2n},\omega)$ is a closed symplectic manifold equipped with a mod-$2$ pseudorotation, then the manifold is ``$\mathbb{Z}/2$-Steenrod uniruled". To give the theorem:

 \begin{theorem*}[See \cite{egorshel1}, Theorem A]
For a closed, monotone symplectic manifold $(M, \omega)$ satisfying the Poincar\'e duality property, the existence of a mod-$2$ pseudo-rotation implies that $(M, \omega)$ is $\mathbb{Z}/2$-Steenrod uniruled.
 \end{theorem*}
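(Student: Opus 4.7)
The approach is by contradiction: suppose that $(M,\omega)$ is \emph{not} $\mathbb{Z}/2$-Steenrod uniruled, i.e.
\[
QSt_2(PD(\mathrm{pt})) = h^{2n}\cdot PD(\mathrm{pt}) \in QH^*(M)\otimes H^*(B\mathbb{Z}/2;\mathbb{F}_2),
\]
so that no quantum corrections appear, and suppose simultaneously that $\phi$ is a mod-$2$ pseudorotation generated by a Hamiltonian $H$. The central idea is to push this purely quantum identity into Hamiltonian Floer theory via the equivariant PSS intertwining relation $\mathcal{P}St_2\circ PSS = PSS_{eq}\circ QSt_2$ from Section \ref{sec:sec-floer-invariants}, to use the filtered nature of $\mathcal{P}St_2$ to obtain a spectral inequality, and then to collide that inequality with the rigidity of spectral invariants of pseudorotations. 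Setting $\alpha := PSS(PD(\mathrm{pt})) \in HF^*(H)$ the intertwining gives
\[
\mathcal{P}St_2(\alpha) \;=\; PSS_{eq}\bigl(h^{2n}\cdot PD(\mathrm{pt})\bigr) \in HF^*_{eq}(2H).
\]

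Next, I would exploit that the equivariant pair-of-pants is a filtered operation. Its action-shift is controlled by the geometry of the $p$-legged pair-of-pants domain, yielding an estimate
\[
\hat{c}\bigl(\mathcal{P}St_2(\alpha),\,2H\bigr) \;\leq\; 2\,c(\alpha,H) + K_1
\]
for a constant $K_1$ depending only on the domain and the normalisation of $H$. Because the equivariant PSS map is itself filtered (this is classical in the non-equivariant setting; the equivariant refinement is a comparison-of-Borel-complexes argument), the right-hand side above can be identified, up to an explicit equivariant degree shift, with the non-equivariant spectral invariant on $2H$:
\[
\hat{c}\bigl(PSS_{eq}(h^{2n} PD(\mathrm{pt})),\,2H\bigr) \;=\; c(PD(\mathrm{pt}),\,2H) + K_2.
\]
Combining these and iterating on the sequence of iterates $H^{\#2^k}$ (each of which is again a mod-$2$ pseudorotation) yields asymptotic control on $c(PD(\mathrm{pt}),H^{\#N})/N$ coming purely from a single non-equivariant datum $c(PD(\mathrm{pt}),H)$.

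Finally I would invoke the pseudorotation hypothesis to close the loop. Because the Floer differential of $\phi^N$ vanishes over $\mathbb{F}_2$ for every $N$, every spectral invariant coincides \emph{exactly} with the action of a contractible $N$-periodic orbit of $H$; the finitely many underlying periodic orbit classes contribute a discrete spectrum, so the normalised sequence $c(PD(\mathrm{pt}),H^{\#N})/N$ must converge to the mean action of some distinguished orbit of $\phi$. On the other hand, the chain of spectral inequalities derived from the non-uniruled assumption forces this asymptotic value to be bounded above by a quantity that, in the absence of a genuine quantum Steenrod correction, cannot account for the action defect contributed by the $h^0$-coefficient of $QSt_2$; this is the contradiction. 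The main obstacle, as I see it, is precisely the filtered comparison in the middle step: one must establish with quantitative control that the equivariant PSS map sends the top equivariant class $h^{2n}\cdot PD(\mathrm{pt})$ to a Floer class whose equivariant spectral invariant is governed by $c(PD(\mathrm{pt}),2H)$, and that the action-shift between the Borel model of $HF^*_{eq}(2H)$ and the naive complex $CF^*(2H)[[u]]\otimes \Lambda(e)$ described in Section \ref{subsubsec:equiv-cohom} is uniformly bounded. Once this filtered comparison is in place, the Smith-type rigidity of pseudorotation spectra (a finiteness statement for the asymptotic set of spectral invariants) closes the argument.
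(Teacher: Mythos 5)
Your overall strategy matches the one sketched in the paper: argue by contradiction, push the undeformed Steenrod square through the equivariant PSS intertwining $\mathcal{P}St_2\circ PSS = PSS_{eq}\circ QSt_2$, use that the equivariant pair-of-pants is a filtered operation to convert $\hat{c}(QSt([pt]),\phi^2)=2c([pt],\phi)$ into a spectral inequality $c([pt],\phi^2)\le 2c([pt],\phi)$, and iterate over powers of two to conclude $c([pt],\phi^{2^k})\le 2^k c([pt],\phi)$, i.e.\ the per-iterate growth of the point-class spectral invariant is bounded. Up to that point your proposal and the paper's proof travel the same road.

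Where you run into a genuine gap is in how you close the contradiction. The statement you are proving carries the hypothesis that $(M,\omega)$ satisfies the ``Poincar\'e duality property,'' which is a relation between the spectral invariants $c([M],\phi)$, $c([pt],\phi)$ and those of $\phi^{-1}$; the role of that hypothesis in the paper's argument is precisely to manufacture the \emph{other half} of the contradiction, namely a strictly superlinear lower bound on $c([pt],\phi^{2^k})$. You never invoke this hypothesis. Instead you attempt to close by appealing to the discreteness of the pseudorotation's action spectrum (your ``Smith-type rigidity'') and to an ``action defect contributed by the $h^0$-coefficient of $QSt_2$.'' Neither works: the discreteness statement only says the normalised spectral invariants converge to a mean action, which is perfectly compatible with the sublinear bound you derived and produces no contradiction on its own; and under your contradiction hypothesis $QSt_2(PD(\mathrm{pt}))=h^{2n}\cdot PD(\mathrm{pt})$ the lower-order coefficients in $h$, including the $h^0$-coefficient, vanish by assumption, so there is no ``action defect'' there to exploit. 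To complete the argument you need to actually use the Poincar\'e duality hypothesis (or, as in the later paper \cite{egorshel2}, replace it with a combinatorial argument) to establish superlinear growth of $c([pt],\phi^{2^k})$, and then the contradiction with your upper bound $c([pt],\phi^{2^k})\le 2^k c([pt],\phi)$ is immediate.
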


 In particular, ``Poincar\'e duality" is a relation between the spectral invariants associated to $[M]$ and $[pt]$ with respect to symplectomorphisms $\phi$ and $\phi^{-1}$, which we will not elucidate here to avoid having to define spectral invariants.

One should consider these as a result towards the notion of uniruledness (hence the name ``Steenrod uniruled"). In particular, the only way that the quantum Steenrod square of $PD(\text{pt})$ may be perturbed is if there are some Gromov-Witten invariants through a ``$\mathbb{Z}/2$-equivariant point", by which we mean a Gromov-Witten invariant intersecting some small sphere twice (this small sphere being considered as the unit normal bundle of a point).

In the initial paper, \cite{egorshel1}, this required the addition of an extra condition; a Poincar\'e duality condition. In the second paper, \cite{egorshel2}, this extra condition was eliminated, fully generalising the result.

To prove this, one needs to make energy estimates. Suppose that there is such a pseudorotation, $\phi$. One can demonstrate that the equivariant spectral invariant of $\hat{c}(QSt([pt]),\phi^2)$ is exactly twice $c([pt],\phi)$. If $QSt([pt])$ were undeformed, i.e. $QSt([pt])  = [pt] h^{\text{dim}(M)}$, then that looks like comparing $2c([pt],\phi)=\hat{c}([pt],\phi^2)$, which is at least $c([pt],\phi^2)$ (this requires some work to show, i.e. a choice of Morse function with unique minimum, energy estimates and so forth). Thus, $c([pt],\phi^{2k}) \le 2^k c([pt],\phi)$, i.e. sublinear growth. One then uses another method (e.g. Poincar\'e duality in \cite{egorshel1}) to imply strictly-superlinear growth for some choice of $k$. This inequality generally does not rely on the fact that there is a pseudorotation. In \cite{egorshel2}, the need for Poincar\'e duality was avoided using a combinatorial argument.

\subsubsection{ Approach 2 - \c{C}ineli-Ginzberg-G\"urel}

\cite{cgg} runs is along the similar lines to that of Shelukhin, i.e.  suppose $(M^{2n},\omega)$ is a closed symplectic manifold equipped with a mod-$2$ pseudorotation. Then one wants to prove $M$ is not $\mathbb{Z}/2$-Steenrod uniruled.

The main theorem in question is:

\begin{theorem*}[See \cite{cgg}, Theorem 1.2]
For a closed monotone symplectic manifold $(M^{2n}, \omega)$, the existence of a non-degenerate Hamiltonian mod-$2$ pseudo-rotation implies that the quantum Steenrod square $\mathcal{QS}$ of $PD(pt) \in H^{2n}(M; \mathbb{F}_2)$ is deformed.
\end{theorem*}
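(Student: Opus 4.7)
The plan is to argue by contradiction: assume the undeformed identity $\mathcal{QS}(PD(\text{pt})) = h^{2n}\, PD(\text{pt})$, and derive from the existence of the pseudorotation $\phi$ a violation of this equality. The strategy follows the general template of \cite{egorshel1, egorshel2} recalled in Section~\ref{pseudorotations}: transport undeformedness across the equivariant PSS isomorphism, extract a sharp spectral-invariant inequality from the fact that the equivariant pair-of-pants is filtered by the action functional, iterate this inequality under powers of $\phi$ to obtain sublinear growth of a certain spectral invariant, and then close with a lower bound coming from the rigidity of pseudorotations.

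The first concrete step would be to invoke the intertwining relation $\mathcal{P}St_2 \circ PSS = PSS_{eq} \circ QSt_2$ of Section~\ref{sec:sec-floer-invariants} and apply it to $PD(\text{pt})$; under the undeformed hypothesis this gives $\mathcal{P}St_2(PSS(PD(\text{pt}))) = h^{2n}\, PSS_{eq}(PD(\text{pt}))$. Since the equivariant pair-of-pants $\mathcal{P}$ is filtered, I would deduce an inequality of the form $\hat{c}(h^{2n}\, PSS_{eq}(PD(\text{pt})), \phi^2) \le 2\, c(PD(\text{pt}), \phi)$, and interpret the left-hand side as a spectral invariant of $PD(\text{pt})$ at the level of $\phi^2$ to obtain $c(PD(\text{pt}), \phi^2) \le 2\, c(PD(\text{pt}), \phi)$. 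The pseudorotation hypothesis guarantees that $\phi^{2^{k-1}}$ is again nondegenerate with vanishing $\mathbb{F}_2$-Floer differential, so the same reasoning applied iteratively yields the sublinear bound $c(PD(\text{pt}), \phi^{2^k}) \le 2^k\, c(PD(\text{pt}), \phi)$ for every $k \ge 1$.

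The second step is to extract a conflicting lower bound on this growth from the pseudorotation structure itself. Because the Floer differential of every iterate vanishes over $\mathbb{F}_2$, the chain complex $CF^*(\phi^N)$ equals $HF^*(\phi^N)$, so the action spectrum of $\phi^N$ is rigid and every fixed point realises a spectral invariant of some Floer cohomology class. For a monotone symplectic manifold, the action values of iterates are further constrained by the mean index identity, and a combinatorial/barcode analysis of how these finitely many values can distribute across the iterates $\phi^{2^k}$ should force $c(PD(\text{pt}), \phi^{2^k})$ to grow strictly faster than $2^k\, c(PD(\text{pt}), \phi)$. Confronting this lower bound with the sublinear estimate above produces the contradiction.

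The main obstacle, and the place where this circle of ideas is technically most delicate, lies in making the lower bound strict without appealing to auxiliary hypotheses such as the Poincar\'e duality condition of \cite{egorshel1}. I expect the hardest step to be the persistence-module (barcode) analysis required to convert the Shelukhin-type equality of spectral invariants into a \emph{strict} inequality, relying solely on the vanishing-differential hypothesis; this is the genuinely new ingredient over the non-iterated setting, and its execution depends on understanding precisely how barcodes of $\phi$ interact with those of $\phi^{2^k}$ under repeated applications of the equivariant pair-of-pants.
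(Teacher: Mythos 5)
Your proposal is essentially the Shelukhin argument of \cite{egorshel1,egorshel2} (Approach 1 in Section \ref{pseudorotations}) applied to the CGG statement, not the argument the \cite{cgg} paper itself gives. Both are valid routes to roughly the same conclusion, but they differ in what quantity is tracked and where the contradiction appears. You track \emph{spectral invariants}: undeformedness of $QSt(pt)$, plus the filtered equivariant pair-of-pants, yields the sublinear bound $c(PD(\mathrm{pt}),\phi^{2^k}) \le 2^k\, c(PD(\mathrm{pt}),\phi)$, which must then be contradicted by a strictly superlinear lower bound extracted from the pseudorotation structure. You correctly flag that making this lower bound strict without an auxiliary Poincar\'e duality hypothesis is the hard step; this is exactly what \cite{egorshel2} solves with a combinatorial/barcode argument. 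The \cite{cgg} proof sidesteps this entirely. It instead tracks \emph{cohomological degree in the equivariant parameter $h$}: if $QSt(PD(\mathrm{pt}))$ were undeformed it would be divisible by $h^{2n}$, hence so is $\mathcal{P}(PSS_\phi(\mathrm{pt}))$ by the intertwining relation; on the other hand one computes at chain level that $\mathcal{P}(PSS_\phi(\mathrm{pt})) = PSS_{eq,\phi^2}(\mathrm{pt}) = h^m\, PSS_{\phi^2}(\mathrm{pt}) + O(h^{m-1})$, where the leading exponent $m$ is controlled by Conley--Zehnder indices; an \emph{index jump} between $\phi$ and some iterate $\phi^{2^k}$ forces $m < 2n$, yielding the contradiction without any spectral-invariant growth estimate or barcode analysis. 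What the CGG route buys is precisely avoidance of the persistence-module machinery you identify as the delicate bottleneck, replacing it with a local index computation (the index jump) that needs no extra hypotheses; what your route buys is a proof that works by energy/filtration estimates alone and parallels the original Shelukhin proof.
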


An argument by contradiction is used here. One assumes both that there is a pseudorotation, and that $QSt(pt)$ is undeformed. The crux of the proof is that if $QSt(pt)$ is undeformed, then it is divisible by $h^{2n}$. Thus, so too must be the associated equivariant pair-of-pants product applied to the Floer cohomology class $PSS_{\phi}(pt)$. But what $PSS_{\phi}$ is doing is considering $pt$ as a (capped) orbit of the Hamiltonian flow induced by $\phi$. One can likewise consider the equivariant $PSS_{eq,\phi^2}(pt)$, viewing $pt$ as a (capped) orbit of the Hamiltonian flow induced by $\phi^2$, and the highest order in $h$ term will be, at the chain level, $PSS_{\phi^2}(pt)$. Then, if there is an index jump from the $\phi$ to the $\phi^2$ case, then this provides a contradiction because one has that $h^{2n}$ divides $QSt(PSS_{\phi}(pt))$ but $QSt(PSS_{\phi}(pt)) = PSS_{eq,\phi^{2}} \circ \mathcal{P}(pt) = h^m PSS_{\phi^2}(pt) + O(h^{m-1})$ for some $m<2n$. Finally, one demonstrates that there is an index jump for $\phi^{2^k}$ for some $k$.

\subsubsection{Steenrod uniruledness implies uniruledness - Rezchikov}

This result in \cite{semon} is important contextually to the above, in that it demonstrates the following: if a monotone or minimal Chern $> 1$ symplectic manifold is $\mathbb{Z}/p$-Steenrod uniruled for infinitely many primes $p$ (this naturally extends the definition of ``$\mathbb{Z}/2$-Steenrod uniruled"), then its quantum product (over $\mathbb{Q}$) is deformed. There is a similar result for a non-monotone symplectic manifold with minimal Chern number equal to $1$, although in this case all one needs is $\mathbb{Z}/2$-Steenrod uniruled. 

The Theorem in question is the following (noting here that the notation $\mathbb{F}_2$-uniruled is the same as $\mathbb{Z}/p$-Steenrod uniruled from earlier papers):

\begin{theorem*}[See \cite{semon}, Theorem 1]
 If a $2n$-dimensional symplectic manifold $(M, \omega)$
$$\begin{cases} \begin{array}{l}\text{is monotone and is }  \mathbb{F}_p \text{-uniruled for infinitely many primes } p, \text{ or} \\ 
\text{has minimal Chern number } N>1 \text{ and is } \mathbb{F}_p \text{-uniruled for infinitely many primes } p, \text{ or} \\ \text{has minimal Chern number } N=1 \text{ and is } \mathbb{F}_2 \text{-uniruled}, \end{array} \end{cases}$$
 then there is some quantum product that is not the same as the cup product.
\end{theorem*}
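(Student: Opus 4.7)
The plan is to argue by contrapositive: assume the quantum product on $M$ equals the classical cup product, and show that under each of the three hypotheses, $M$ fails to be $\mathbb{F}_p$-Steenrod uniruled. The engine is the covariant constancy relation from Section~\ref{sec:calculations}, which under monotonicity and the assumption $*_\mu = 0$ for $\mu>0$ becomes a rigid algebraic constraint on the quantum Steenrod operations at most Chern numbers. Writing $QSt_p(PD(\mathrm{pt})) = \sum_{\lambda \ge 0} Q\Sigma^\lambda(PD(\mathrm{pt})) \, q^\lambda$, the hypothesis of $\mathbb{F}_p$-Steenrod uniruledness is exactly that $Q\Sigma^\lambda(PD(\mathrm{pt})) \neq 0$ for some $\lambda > 0$; the goal is to refute this.

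First, I would specialise the covariant constancy algorithm to $\omega = c_1$ (using monotonicity), isolate the Chern-$\lambda$ component, and use the assumption of undeformed quantum product to collapse the sums $\sum_{\mu_1+\mu_2=\lambda}$. The resulting identity reads
\[
\lambda \, Q\Sigma^\lambda_a(x) \;=\; Q\Sigma^\lambda_a(x \cup c_1) \;-\; c_1 \cup Q\Sigma^\lambda_a(x).
\]
Plugging in $x = PD(\mathrm{pt})$, the first right-hand term vanishes on degree grounds, leaving $(\lambda + c_1\cup) \, Q\Sigma^\lambda_a(PD(\mathrm{pt})) = 0$. Since $c_1\cup$ strictly raises cohomological degree, a graded-piece analysis forces $\lambda \cdot Q\Sigma^\lambda_a(PD(\mathrm{pt})) = 0$, and hence $Q\Sigma^\lambda_a(PD(\mathrm{pt})) = 0$ whenever $p \nmid \lambda$.

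Second, I would dispatch cases (a) and (b). The dimension formula \eqref{eq:moduli-dimension} bounds the Chern numbers $\lambda$ that can contribute to $Q\Sigma^\lambda(PD(\mathrm{pt}))$ by a linear function of $n$ and $p$ (roughly $\lambda \le n(p-1)$), so only finitely many $\lambda$ matter. In case (b), effective Chern numbers are multiples of the minimal Chern $N>1$; choosing an $\mathbb{F}_p$-Steenrod uniruled prime $p$ coprime to $N$ (infinitely many such exist by hypothesis), the remaining ``dangerous'' $\lambda = kp$ with $k < n$ must be handled separately -- here I would iterate the covariant constancy argument with $x$ replaced by $x \cup c_1^j$ for various $j$, extracting further linear constraints that, together with the vanishing at $p\nmid\lambda$, force vanishing at all $\lambda > 0$. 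Case (a) reduces to (b) after fixing the minimal Chern number of the monotone $M$.

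The main obstacle is case (c), with $N=1$ and $p=2$: here Chern number $\lambda = 2$ contributes, and the covariant constancy recursion becomes $2 \cdot Q\Sigma^2 = 0$ over $\mathbb{F}_2$, giving no information. Chern-$2$ contributions to $QSt_2(PD(\mathrm{pt}))$ include $2$-fold covers of Chern-$1$ curves, exactly the ``interesting'' multi-cover terms flagged in Section~\ref{sec:calculations} as the hardest to compute. The hard part is therefore showing that when the quantum product is undeformed, these $2$-fold cover contributions also vanish; I expect this requires a direct moduli-theoretic argument about the obstruction bundle over multiply covered Chern-$1$ configurations, together with the fact that a Chern-$1$ curve through a generic point (which would feed such a cover) is precisely what undeformedness of the $3$-point quantum product forbids. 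That last link -- turning the $(p+1)$-pointed multi-cover moduli into a $3$-pointed invariant -- is the step I would expect to carry the real geometric content of the proof.
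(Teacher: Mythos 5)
The survey does not prove this theorem; it only gives the one-sentence gloss that the proof ``uses techniques such as the covariant constant equation of \cite{covariant}, some structural properties in \cite{seidelformal}, and more observations about the structure of quantum Steenrod operations by Rezchikov.'' So there is no proof in the paper to compare against directly, but your sketch can still be assessed on its own terms, and there are several genuine gaps.

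First, there is a confusion in the roles of the two inputs to $Q\Sigma$. The $\mathbb{F}_p$-Steenrod-uniruledness condition concerns $QSt_p(PD(\mathrm{pt}))$, and the survey's own Cartan-relation computation shows $QSt_p(z)=Q\Sigma_z(1)$, so the relevant quantity is $Q\Sigma_{PD(\mathrm{pt})}(1)$, with $a=PD(\mathrm{pt})$ fixed and $x$ iterating over $1,c_1,c_1^2,\dots$. Your argument instead takes $x=PD(\mathrm{pt})$, which is not the correct input slot. Both setups happen to give $Q\Sigma^\lambda=0$ when $p\nmid\lambda$ because $(\lambda + c_1\cup)$ is invertible mod $p$, so the conclusion of your first step survives the correction, but the iteration you propose for the harder case (``replace $x$ by $x\cup c_1^j$'') is then vacuous in your setup since $PD(\mathrm{pt})\cup c_1 = 0$ immediately, and in the corrected setup it simply re-derives the $p\nmid\lambda$ vanishing.

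The central gap is that you have no argument at all when $p\mid\lambda$. In that case $(\lambda + c_1\cup)$ reduces to $c_1\cup$, which is nilpotent, so the covariant constancy constraint degenerates to the tautology $(c_1\cup)^{n+1}=0$ and gives no information. Coprimality of $N$ and $p$ forces $Np\mid\lambda$, but the degree bound only gives $\lambda\le np$, so dangerous values $\lambda=Np\ell$ with $N\ell\le n$ remain whenever $N\le n$ (i.e. for essentially every monotone manifold other than $\mathbb{CP}^n$). These are exactly the terms you cannot reach by covariant constancy alone, and they are precisely the multi-cover contributions the survey flags as the genuinely hard part of the theory. The survey's description of Rezchikov's proof attributes the closing of this gap to the formal-group structure of \cite{seidelformal}; your proposal makes no use of that input, and without it the vanishing at $p\mid\lambda$ does not follow.

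Two smaller points: your claim that ``case (a) reduces to (b) after fixing the minimal Chern number'' is incorrect, since (a) includes monotone manifolds with $N=1$, which are excluded from (b); and for case (c) you correctly identify the $2$-fold-cover issue as the crux but offer only a speculative direction rather than an argument, so that case is also open in your sketch.
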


This result connects the results in Section \ref{pseudorotations} to those traditional results in uniruledness, in particular demonstrating that the name ``Steenrod uniruled" is apt. Further, it allows one to connect these mod-$p$ pseudorotations, which only store mod-$p$ information, with a result in characteristic $0$. This can be seen as a missing link between the work by the previously mentioned authors, and the Chance-McDuff conjecture. 

The proof uses techniques such as the covariant constant equation of \cite{covariant}, some structural properties in \cite{seidelformal}, and more observations about the structure of quantum Steenrod operations by Rezchikov. We will not elaborate on the method of proof here.

\subsection{Formal groups - P. Seidel}

This work by P. Seidel in \cite{seidelformal} is among the more structural. It details the fact that the space of Maurer-Cartan elements on a symplectic manifold $X$ (which, in the context of an $A_{\infty}$-algebra are solutions of an equation involving the $A_{\infty}$-operations $\mu_i$) has a formal group structure (we do not include the definition in this survey). 

What this then allows is for one to define $p$-th power maps of Maurer-Cartan elements. One can then demonstrate that the space of Maurer-Cartan elements projects down to the odd cohomology of $X$, and that projection of the $p$-th power map of Maurer-Cartan elements covers the quantum Steenrod $p$-th power operation.

\subsection{Calculations of multiply covered curves - J. H. Lee}
\label{subsec:jhlee-mcc}

In \cite{jae1}, J. H. Lee performs the first true calculation of contributions to quantum Steenrod powers $QSt_p$ via multiply covered curves. The author does this by adapting the work of Voisin, \cite{voisin}, to the equivariant case, including equivariant with respect to an $S^1$-action on the underlying symplectic manifold (hence, all cohomology in the main theorems is taken with respect to $\mathbb{Z}/p \times S^1$, the former group acting on the domain of the pseudoholomorphic maps and the latter group acting on the target). It should be emphasised that, at time of writing, this is the only calculation of some sort of quantum Steenrod operation using holomorphic spheres with Chern number divisible by $p$. 

The main result is:

\begin{theorem*}[See \cite{jae1}, Theorem 1.2 (and Theorem 4.12 and Corollary 6.10 for the calculation)]
Let $M$ be the cotangent bundle of $\mathbb{P}^1$. The $S^1$-equivariant quantum Steenrod operation is a covariantly constant endomorphism for the equivariant quantum connection. Its classical term (the coefficient of $q^0$ from the Novikov ring) agrees with the cup product with the classical Steenrod power $St(b)$ of the class $b \in H^2_{S^1} (T^* \mathbb{P}^1 ; Fp)$, Poincar\'e dual to a cotangent fiber.

This endomorphism can be computed in all degrees, i.e. for any $p > 2$ one can compute the coefficients for all
$\mathbb{Z}/p$-equivariant parameters $t^k \theta^{\epsilon}$, and for all powers of the $S^1$-equivariant parameter $h$.
\end{theorem*}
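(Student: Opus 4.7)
The plan has three moving parts: establish the covariant constancy relation in the $S^1$-equivariant setting, identify the classical ($q^0$) term with the classical Steenrod power, and then carry out an explicit calculation of the multiply covered sphere contributions using a Voisin-type localisation.

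First, I would upgrade the covariant constancy relation \eqref{equation:covariant-constant} to the $\mathbb{Z}/p \times S^1$-equivariant framework. The underlying chain-level argument is a manipulation of the equivariant moduli spaces parametrised by the subspace $W \subset \overline{\mathcal{M}}_{0,1+p+2}$ used in the sketch following Figure~\ref{fig:constantcy}, and an $S^1$-action on the target $T^*\mathbb{P}^1$ merely decorates holomorphic sphere counts with a rotation weight, without affecting the underlying bordism. For the non-compact target one must work in a suitable Liouville-type setting ensuring $J$-holomorphic spheres remain compact (this is routine for $T^*\mathbb{P}^1$ as all non-constant spheres cover the zero section). The chain-level equivariant bordism of Theorem~\ref{theorem:bordisms} then runs through unchanged and upgrades \eqref{equation:covariant-constant} with $\omega = b$ to an $S^1$-equivariant identity, which is exactly the statement that $QSt_p$ is covariantly constant for the $\mathbb{Z}/p \times S^1$-equivariant quantum connection.

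Second, for the classical term, one observes that the $q^0$ coefficient only sees Chern number $0$ curves, i.e.\ constant spheres. The $\mathbb{Z}/p \times S^1$-equivariant moduli spaces of Definition~\ref{definition:moduli-def} with $j = 0$ then reduce to the Morse-theoretic ladders parametrised by $E\mathbb{Z}/p \times_{\mathbb{Z}/p}\{\mathrm{pt}\}$ which form the abstract input to Steenrod's construction (cf.\ \cite{fukaya, betz}); comparing the equivariant Thom class of the diagonal of $T^*\mathbb{P}^1$ in $H^*_{S^1}$ (which is Poincar\'e dual to $b$) with the abstract diagonal class in Steenrod's construction yields $QSt_p^0(x) = St(b) \cup x$.

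Third, and the real technical core, is the computation for Chern number $\lambda \geq 1$. All non-constant $J$-holomorphic spheres in $T^*\mathbb{P}^1$ are covers of the zero section, so the algorithmic procedure following \eqref{equation:covariant-constant} reduces $\lambda < p$ to the classical Steenrod term and to non-equivariant quantum multiplication, but stalls at $\lambda = p$ when $p$-fold covers first contribute. To handle these I would adapt Voisin's technique from \cite{voisin}: realise the relevant moduli space of $p$-fold covers of $\mathbb{P}^1$ as a $\mathbb{Z}/p \times S^1$-equivariant family over a fixed locus, and compute the contribution to $QSt_p^\lambda$ as an equivariant Euler class of the obstruction bundle integrated against the equivariant fundamental class of the $(\mathbb{Z}/p \times S^1)$-fixed loci. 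The hard part will be setting up the obstruction theory so that the equivariant virtual count is well-defined and matches the concrete moduli count of Definition~\ref{definition:moduli-def}, i.e.\ verifying that the equivariant excess intersection on the Bott-Morse locus of multiple covers reproduces the abstract operation rather than a shift of it. Once the base case $\lambda = p$ is pinned down, iterating covariant constancy in the coefficients of $t^k \theta^{\epsilon} h^{\ell}$ propagates the answer to all higher $\lambda$ and thus determines the endomorphism in every equivariant degree.
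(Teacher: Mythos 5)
Note first that the survey does not prove this theorem: it is a cited result of J.~H.~Lee, and the paper only offers a one-paragraph description of the strategy in Section~\ref{subsec:jhlee-mcc}. What you can be checked against is that description, and two substantive mismatches stand out.

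The first is a genuine gap. You assert that the covariant-constancy algorithm (the procedure following \eqref{equation:covariant-constant} with $\omega=c_1$) resolves $\lambda<p$ in terms of classical Steenrod and non-equivariant quantum multiplication and ``stalls at $\lambda=p$ when $p$-fold covers first contribute.'' For $M=T^*\mathbb{P}^1$ this picture is wrong at the outset: $c_1(TM)=0$ (restricted to the zero section $T M|_{\mathbb{P}^1}\cong T\mathbb{P}^1\oplus T^*\mathbb{P}^1$ has trivial determinant), so the induction on Chern number is vacuous, every nonconstant sphere has Chern number $0$, and already degree-one covers of the zero section fall outside the transversely-cut-out regime the monotone algorithm relies on. There is no ``$\lambda<p$ is easy, $\lambda=p$ is hard'' dichotomy in this geometry. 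The $S^1$-equivariance is not a decoration, as your write-up treats it; it is the load-bearing structure, precisely because the $S^1$-equivariant first Chern class is nonzero (deformed by the equivariant parameter $h$), giving a nontrivial equivariant quantum connection in which covariant constancy actually carries information. As a side remark, this is also why the survey stresses that this is the first computation of quantum Steenrod contributions from spheres with Chern number divisible by $p$: for $T^*\mathbb{P}^1$ that condition holds for \emph{every} sphere, since the Chern number is always $0$.

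The second is a difference in technique rather than a gap, but worth flagging. You propose handling the multiple-cover contribution as an equivariant Euler class of an obstruction bundle against the fundamental class of the $(\mathbb{Z}/p\times S^1)$-fixed loci, i.e.\ an excess-intersection/virtual-class computation. The survey's description of Voisin's method --- and of Lee's adaptation --- is different: one introduces an \emph{atypical compactification} of the space of perturbed holomorphic spheres that factors each curve into a ``horizontal'' section of the line bundle and a ``vertical'' fibre sphere, and then \emph{proves regularity} of this compactification, so the contributions are honest transverse counts rather than virtual ones. That choice is what lets the output feed cleanly into the chain-level bordism arguments (the $\overline{Q_j}$ of Theorem~\ref{theorem:bordisms}) that define $QSt_p$ in the first place. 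Your proposal would need to build a separate bridge from the virtual count back to the concrete moduli count of Definition~\ref{definition:moduli-def}; you flag this yourself as ``the hard part,'' but it is worth realising that Lee's route avoids that bridge altogether.
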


The idea of this work, as understood by the author of this survey, is the following: one is considering the total space of a line bundle over $\mathbb{P}^1$. In \cite{voisin}, one uses an 'atypical' compactification of the space of pseudoholomorphic maps, which matches the standard count of curves on the open stratum, splitting a perturbed holomorphic sphere in the total space into a horizontal sphere (i.e. section of the bundle) and a vertical holomorphic sphere. After demonstrating regularity and so forth, this new compactification is used to calculate contributions of multiply covered curves to curve counts. In \cite{jae1}, this construction/compactification is combined with existing studies of quantum Steenrod operations, including extensions of properties such as covariant constantcy, to provide a way to count contributions arising from multiply covered curves in the quantum Steenrod power operations. 

\subsection{QSt and $p$-curvature for symplectic resolutions - J. H. Lee}
\label{subsec:jhlee-curv}

In \cite{jae3}, J. H. Lee relates the Steenrod power operations $Q \Sigma_b$ to the ``$p$-curvature" for symplectic resolutions. We note that in \cite{jae3}, the operations and the underlying quantum cohomology are different from the operations and cohomologies listed in this survey. In particular, they are taken to be equivariant with respect to a cocharacter of a torus action, acting on the underlying symplectic manifold. 

In brief, if one studies pseudoholomorphic maps $u:S \rightarrow M$, then so far in this survey we have studied equivariance with respect to an action on $S$, whereas in \cite{jae3} one upgrades these notions to include a Hamiltonian torus action $T$ on $M$. 

We briefly note that the $p$-curvature with $b \in H^*(M)$, $F_b$, for a mod-$p$ quantum connection $\nabla_b$, is $F_b := \nabla^p_b - t^{p-1} \nabla_b$. We note that $t \in H^2(B \mathbb{Z}/p; \mathbb{F}_p)$ is an equivariant parameter. Then the main theorem is as follows:

\begin{theorem*}[See \cite{jae3}, Theorem 1.2]

For almost all primes $p$, the $T$-equivariant quantum Steenrod operation and $p$-curvature with $b \in H^*(M)$ agree.
    
\end{theorem*}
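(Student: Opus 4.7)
The plan is to realise both the $T$-equivariant quantum Steenrod operation $Q\Sigma_b$ and the $p$-curvature $F_b = \nabla_b^p - t^{p-1}\nabla_b$ as covariantly constant endomorphisms of the $T$-equivariant mod-$p$ quantum $D$-module, and then to identify them by comparing their values at the classical limit $q = 0$. The two main tools are the covariant constancy relation \eqref{equation:covariant-constant} of \cite{covariant} and the classical fact that the $p$-curvature of a flat connection commutes with the connection.

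First I would verify $[\nabla_b, Q\Sigma_b] = 0$. This is precisely the content of \eqref{equation:covariant-constant}, whose proof in \cite{covariant} is a bordism argument using the partial localisation of Section \ref{sec:partial-loc} applied to the $\bZ/p$-equivariant homology of the space of domains in $\overline{\mathcal{M}}_{0,1+n}$ with $n = p+2$ pictured in Figure \ref{fig:constantcy}. I would upgrade every choice of auxiliary data (almost complex structure, Morse function, perturbations) to be $T$-equivariant; the partial localisation identity of Proposition \ref{proposition:partial-localisation} remains valid, and the chain-level bordism produces the covariant constancy identity unchanged, now in the $T$-equivariant theory. Second, the commutation $[\nabla_c, F_b] = 0$ for every $c$ is tautological in characteristic $p$: expanding $[\nabla_b^p, \nabla_c]$ using the derivation identity modulo $p$ shows that $\nabla_b^p - t^{p-1}\nabla_b$ is $\mathcal{O}$-linear and horizontal provided the connection is flat, which holds for the quantum connection by the WDVV equation.

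Third, I would compute the classical limit. At $q=0$, $Q\Sigma_b$ reduces by construction to multiplication by the classical $T$-equivariant Steenrod power $St(b)$ of the class $b$, while $\nabla_b$ reduces to $t\partial_b + b\cup$; the Frobenius identity in characteristic $p$ (expanding $(t\partial_b + b\cup)^p$ by repeated commutation) then yields $(t\partial_b + b\cup)^p - t^{p-1}(t\partial_b + b\cup) = St(b)\cup$. Hence the two endomorphisms agree on the fibre at $q=0$. Finally, I would invoke a rigidity statement for covariantly constant endomorphisms of a generically semisimple quantum $D$-module: such an endomorphism is determined by its value at any single fibre, so the agreement at $q=0$ propagates to the identity $Q\Sigma_b = F_b$ on the whole $D$-module.

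The main obstacle lies in controlling which primes must be excluded to obtain the ``almost all'' clause. The exceptional finite set of primes must absorb at least the following: primes where transversality of the $T$-equivariant moduli spaces breaks down; primes dividing the discriminant of the classical equivariant cohomology ring, where generic semisimplicity of the quantum cohomology of the symplectic resolution fails; and small primes for which the classical $p$-curvature/Steenrod identity acquires corrections. A secondary technical difficulty is keeping the interaction between the target torus equivariant parameters and the Frobenius in the classical computation organised cleanly, particularly when $M$ carries a nonabelian symmetry only partially accessible through the cocharacter.
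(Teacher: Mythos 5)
Your skeleton correctly identifies the two structural facts both operators share — covariant constancy and agreement of the classical ($q^0$) term — but the bridge you build between them and the desired equality is the weak point, and it is exactly there that Lee's actual argument does its real work.

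Your final step invokes ``rigidity of covariantly constant endomorphisms of a generically semisimple quantum $D$-module'' to propagate agreement from the fibre $q=0$ to the whole module. This is not available in the setting of the theorem. The quantum $D$-modules of symplectic resolutions ($T^*\mathbb{P}^1$ being the running example) are typically \emph{not} generically semisimple, and even where some semisimplicity holds, a covariantly constant endomorphism is \emph{not} determined by its restriction to a single fibre unless one also knows something like irreducibility of the monodromy representation — the commutant of $\nabla$ can easily contain nonzero flat endomorphisms vanishing at $q=0$. So the argument as stated has a genuine hole at the decisive step.

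Lee's proof goes by a different and more delicate route, which the gap above forces. Having established covariant constancy of both sides (your step one is indeed the content of \eqref{equation:covariant-constant}, suitably $T$-equivariant), one forms the difference $D := Q\Sigma_b - F_b$, which is a flat endomorphism, and then \emph{shows $D$ is nilpotent}. This is where representation-theoretic input enters: the crucial extra structure is the \emph{shift operator} associated to cocharacters of the torus, and the proof analyses in detail how both $Q\Sigma_b$ and $F_b$ interact with it. Compatibility with the shift operator constrains the eigenvalues of $D$ so severely that they must all vanish. Only then does one argue that the nilpotent difference is actually zero, for almost all $p$, by a consideration of the spectrum of $D$ viewed as a linear operator; this is also the point at which the exceptional finite set of primes genuinely arises, rather than from the heuristic list (transversality, discriminants, small-prime corrections) you offer. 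In short: your first three paragraphs are on the right track, but the heavy lifting in the theorem is the nilpotence-via-shift-operators step and the subsequent spectral argument, neither of which appears in your proposal, and the rigidity principle you substitute for them does not hold.
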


Broadly, the idea is to first study the difference between these operators, and demonstrate that this difference is nilpotent. This is what involves the representation theory, although a key part of this proof involves studying in detail how each of these two operators interacts with the ``shift operator" (see \cite[Section 2.4]{jae3}, although there are references provided to the ancestors of the ideas in this paper). The last step (showing that this difference in fact vanishes for most $p$) requires a consideration of the spectrum of this difference, viewed as a linear operator. 

\subsection{Relation to work on Lagrangian Floer cohomology - Z. Chen}

In \cite{zihong}, Z. Chen gives a first idea stage of the theoretical basis of quantum Steenrod operations, by relating them to operations on Hochschild Cohomology. By moving into the realm of Langrangian Floer cohomology, this firmly entrenches quantum Steenrod operations as an underlying structure across symplectic cohomology.

There are a lot of results in \cite{zihong}, but we would like to focus here on the first main result (chronologically within the paper), as follows: there is a relation between the quantum Steenrod $p$-th power operations, and a $\mathbb{Z}/p$-equivariant cap product on $\mathbb{Z}/p$-equivariant $HH^*$. This relation is achieved via the $\mathbb{Z}/p$-equivariant open-closed map, $OC^{\mathbb{Z}/p}$. The author is not sufficiently familiar with the material, at time of writing, to give any intuition regarding the proof. However, following is the main result for our purposes:

\begin{theorem*}[See \cite{zihong}, Theorem 1.3]
Let $F$ be the Fukaya category of $M$. Then the following relation holds (for any $b \in QH^*(M)$): 

$$Q \Sigma_b \circ OC^{\mathbb{Z}/p} (-) = OC^{\mathbb{Z}/p} \circ \left( CO(b) \cap^{\mathbb{Z}/p}- \right),$$ as maps from $HH^{\mathbb{Z}/p}_*(F) \rightarrow QH^{*+p |b| +n}(M)[[t,\theta]]$.
 
\end{theorem*}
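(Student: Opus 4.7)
The plan is to promote the well-known non-equivariant identity $b * OC(-) = OC(CO(b) \cap -)$ (expressing that $OC$ is a $QH^*(M)$-module map, once one identifies the $QH^*(M)$-action on $HH_*(F)$ with cap by $CO$) to its $\mathbb{Z}/p$-equivariant counterpart. The non-equivariant statement has a moduli-theoretic proof via a one-parameter family of Riemann surfaces: on one end, the class $b$ is inserted near the closed output of the $OC$-configuration (computing $b *$ applied to $OC$), while on the other end it is pulled through the surface and attached as an interior insertion on a disc bubble intersecting one of the boundary components (computing $OC$ applied to cap with $CO(b)$). I would begin by recalling this cobordism in enough detail to identify exactly where each marked point lives and how the degeneration takes place.

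Next I would introduce the equivariant setup following Section \ref{sec:sec-equivariant-quantum-operations}. The quantum Steenrod operation $Q\Sigma_b$ is built from a configuration with $p$ copies of $b$ arranged $\mathbb{Z}/p$-symmetrically around a marked point on a principal sphere, and an additional marked point carrying the free input; the whole moduli problem is parametrised by a finite-dimensional approximation to $E\mathbb{Z}/p$. To construct $OC^{\mathbb{Z}/p}$ on the right-hand side, one similarly parametrises a disc with a boundary input of $CO(b)$ over $E\mathbb{Z}/p$. Both $OC^{\mathbb{Z}/p}$ and $Q\Sigma_b$ can be viewed as counts of sections of a common family of moduli spaces fibred over $E\mathbb{Z}/p \times_{\mathbb{Z}/p} A$ for appropriate spaces of domains $A$, and the cyclic symmetry of the Hochschild complex allows the $\mathbb{Z}/p$-action to permute the $p$ copies of $b$ and simultaneously rotate the boundary punctures of the Hochschild input. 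The equivariant open--closed moduli space provides a common home in which both sides of the identity can be expressed as counts of rigid holomorphic curves.

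The core of the argument is to build a $\mathbb{Z}/p$-equivariant one-parameter family of moduli spaces interpolating the two configurations. Concretely, one writes down a family of domains parametrised by $[0,1] \times E\mathbb{Z}/p$ where at $t=0$ one has the $OC$-configuration followed by a $Q\Sigma_b$-configuration (a sphere bubble with $p$ copies of $b$ attached at the closed output of $OC$), and at $t=1$ the $p$ copies of $b$ have been moved along the surface to collide with (a cover of) the Hochschild input, reproducing the insertion of $CO(b)$. The $\mathbb{Z}/p$-action rotates the $p$ copies of $b$ simultaneously and acts on the parameter $E\mathbb{Z}/p$; by construction, the interpolating family is $\mathbb{Z}/p$-equivariant. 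Counting boundary points of the resulting $1$-dimensional parametrised moduli space, modulo $p$, yields the desired chain-level identity after passing to homology, following the same scheme as Theorem \ref{theorem:bordisms}.

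The main obstacle, I expect, is twofold. First, one must justify that the bubbling degeneration at $t=1$ produces exactly the equivariant $CO$ map: this requires arguing that the $p$-fold symmetric insertion of $b$ limits, equivariantly, to a single insertion of $CO(b)$ on the Hochschild chain, which is really the equivariant version of the unit/compatibility of $CO$ with the quantum cup product and is subtle because $CO$ itself ought to be equivariantly enhanced. Second, achieving transversality while preserving the $\mathbb{Z}/p$-symmetry of domain-dependent data is the familiar obstruction to equivariant constructions; this is handled by using the Borel construction, i.e.\ parametrising perturbation data by $E\mathbb{Z}/p$ as in Section \ref{sec:sec-equivariant-quantum-operations}, and verifying that the boundary contributions from the $E\mathbb{Z}/p$-direction cancel as in the closed-string case. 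Everything else—compactness, orientations over $\mathbb{F}_p$, and extracting the cap-product interpretation of the limiting configuration—is a parametrised version of the standard $OC/CO$ machinery.
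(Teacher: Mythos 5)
The survey paper does not actually prove this theorem. It is quoted verbatim from Chen's paper \cite{zihong}, and the survey author explicitly states immediately before the theorem: ``The author is not sufficiently familiar with the material, at time of writing, to give any intuition regarding the proof.'' The theorem appears in Section \ref{sec:recentwork} of the survey, which is a catalogue of results by other authors, and the surrounding text provides only notational glosses (what $HH^{\mathbb{Z}/p}$ and $CO(b) \cap^{\mathbb{Z}/p} -$ denote) rather than any argument. There is therefore nothing in this paper against which your proposal can be checked.

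That said, your proposal is a sensible high-level guess at the shape of such a proof: start from the non-equivariant module identity $b * OC(-) = OC(CO(b) \cap -)$, parametrise the interpolating one-parameter family of $OC$-type domains over a finite-dimensional approximation to $E\mathbb{Z}/p$ via the Borel construction (consistent with the methodology of Section \ref{sec:sec-equivariant-quantum-operations} and Theorem \ref{theorem:bordisms}), and extract the identity from the boundary of the resulting parametrised $1$-dimensional moduli space, with the $E\mathbb{Z}/p$-direction boundary contributions cancelling in families of size $p$. You have also correctly flagged the two real technical pressure points: (i) verifying that the degeneration end really computes an equivariant cap with $CO(b)$, which requires the closed--open map to interact correctly with the cyclic/$\mathbb{Z}/p$-structure on Hochschild chains; and (ii) equivariant transversality for the domain-dependent data. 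Whether this matches the mechanism Chen actually uses (for instance, whether the argument is run at the level of the equivariant open--closed map directly, or via an equivariant version of the $A_\infty$-bimodule structure, or via some algebraic reduction through the cyclic category) cannot be determined from the survey. You would need to consult \cite{zihong} itself to verify the details, and in particular to confirm that $\cap^{\mathbb{Z}/p}$ is defined so that the degeneration picture you describe is the one Chen's chain-level formula realises.
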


We provide no definition of the Fukaya category of $M$ in this survey. We will simply state in the above that $HH$ denotes the Hochschild homology, and $HH^{\mathbb{Z}/p}$ the $\mathbb{Z}/p$-equivariant Hochschild homology. Further, $\left( CO(b) \cap^{\mathbb{Z}/p}- \right)$ is a $\mathbb{Z}/p$-equivariant capping operation with the class $CO(b) \in HH_*(M)$, which is the closed-open map of $b$: see \cite[Equation (1.2)]{zihong}.

\subsection{Relation to quantum Kirwan operations - G. Xu}
\label{subsec:xu}

In \cite{xu}, G. Xu compares the quantum Steenrod operations $Q \Sigma_b$ on $QH^*(M // G)$ to the classical Steenrod operations $\Sigma_b$ on $H^*(BG)$, for a symplectic manifold $M$ with a Hamiltonian $G$-action (along with certain assumptions on $M$ and $G$). One does this comparison through the construction of ``equivariant quantum Kirwan maps", from $H^*(BG)$ to $QH^*(M // G) \times H^*(B \mathbb{Z}/p).$ The main usefulness of this equivalence is in computation, providing a new avenue to compute quantum Steenrod operations: the classical Steenrod operations are reasonably easy to compute, and the (equivariant) quantum Kirwan map can be calculated through the formalism of ``affine vortices", which we will not expand upon here.

The particular theorem is as follows (note that the hypotheses listed in the theorem are technical hypotheses we will not include here, but for a little context have titles such as ``regular quotient", ``equivariant convexity", ``contractible target" and ``equivariant montonicity"):

\begin{theorem*}[See \cite{xu}, Theorem B]
Under certain hypotheses, \cite[Hypotheses 1.1-1.4]{xu}, there is a ``$\mathbb{Z}/p$-equivariant quantum Kirwan map",
$$\kappa^{eq}: H^*(BG) \rightarrow QH^*(M // G) \times H^*(B \mathbb{Z}/p).$$ such that for all $x,b \in H^*(BG; \mathbb{F}_p)$, $$Q \Sigma_{\kappa(b)}(\kappa^{eq}(x)) = \kappa^{eq}(\Sigma_b(x)).$$
    
\end{theorem*}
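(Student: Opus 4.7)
The plan is to realise both sides of the identity as parametrised counts in a single $\mathbb{Z}/p$-equivariant moduli space and then read off the equality from a cobordism between two degenerations of that space. First I would recall the (non-equivariant) quantum Kirwan map of Woodward: $\kappa:H^*(BG)\to QH^*(M//G)$ is defined by counting affine vortices on $\mathbb{C}$, i.e.\ gauge-equivalence classes of pairs $(A,u)$ with $A$ a connection on a principal $G$-bundle and $u$ a $G$-equivariant map into $M$ satisfying the symplectic vortex equations and having finite Yang--Mills--Higgs energy, with evaluation at the origin producing a class in $H^*(BG)$ and asymptotic evaluation at infinity landing in $QH^*(M//G)$. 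Under the hypotheses listed in \cite{xu}, these moduli spaces admit compactifications analogous to those of $J$-holomorphic spheres, with sphere bubbling in $M$ and ``sphere-at-infinity'' bubbling in $M//G$.

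Next I would promote this to a $\mathbb{Z}/p$-equivariant construction following the formalism of Section \ref{sec:sec-equivariant-quantum-operations}. The key input is a $\mathbb{Z}/p$-action on a suitable space of vortex domains: namely, take the nodal configuration whose principal component is $\mathbb{C}$ carrying a single marked point at $0$, together with $p$ output ends permuted cyclically. Parametrising the vortex moduli spaces over a finite-dimensional model of $B\mathbb{Z}/p$ (e.g.\ the $E^i$ of Lemma \ref{lemma:lemma-1}) and mimicking Steps (A)--(C) of Section \ref{sec:eq-qu-op} yields $\kappa^{eq}:H^*(BG)\to QH^*(M//G)\otimes H^*(B\mathbb{Z}/p)$, whose $u^0\theta^0$-coefficient recovers the classical Kirwan map $\kappa$.

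For the intertwining identity, I would interpolate between two $\mathbb{Z}/p$-symmetric configurations in a single family of parametrised domains. At one end, place a principal vortex carrying $x$ at $0$, attach a nodal sphere whose last marked point carries $\kappa(b)$, and arrange the $p$ remaining marked points at roots of unity: the associated $\mathbb{Z}/p$-equivariant count is $Q\Sigma_{\kappa(b)}(\kappa^{eq}(x))$ by the definition of $Q\Sigma$ in Section \ref{subsec:examples} together with the multiplicativity-as-composition principle used in Figure \ref{fig:multiplicativityiscomposition}. At the other end, degenerate the bubble into $p$ flowlines that meet the principal vortex directly; by the compatibility of the non-equivariant Kirwan map with the classical Steenrod power on $H^*(BG)$, this configuration yields $\kappa^{eq}(\Sigma_b(x))$. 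The interior of the family produces a $\mathbb{Z}/p$-equivariant pseudocycle bordism exactly as in the proof of Theorem \ref{theorem:bordisms}, forcing the two endpoint counts to coincide modulo $p$.

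The main obstacle, as always in this story, is transversality and compactness for the parametrised equivariant vortex moduli spaces. The ``equivariant convexity'' and ``equivariant monotonicity'' hypotheses of \cite{xu} are precisely what is needed to rule out energy escape at infinity and to cut down the Novikov-positive bubbling contributions, but one must additionally check that the $\mathbb{Z}/p$-action on the moduli of affine vortices is compatible with a generic choice of $S^\infty$-dependent perturbation data, exactly as in the quantum Steenrod setup. A secondary technical difficulty is the analysis of $p$-fold covered configurations at the boundary of the interpolating family (both covers of vortices and covers of sphere bubbles), which one expects to contribute only to the higher-order Novikov terms and which the hypothesis $N>1$ on the minimal Chern number is designed to control.
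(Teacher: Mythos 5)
Your high-level strategy matches the survey's (brief) description of Xu's argument: both sides of the identity are realised as counts in glued vortex-plus-sphere moduli spaces parametrised over a finite-dimensional model of $B\mathbb{Z}/p$, and the identity comes from a one-dimensional family of parametrised domains whose boundary analysis gives a chain homotopy. The survey compresses this to a single sentence -- ``considering moduli spaces of holomorphic curves associated to the glued operations corresponding to $Q\Sigma_b\circ\kappa^{eq}$ and $\kappa^{eq}\circ\Sigma_b$, and taking resulting $1$-dimensional moduli spaces to define chain-homotopies'' -- and your interpolating family of domain configurations is a reasonable unpacking of that.

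Two cautions on the details you fill in, however. First, at the degenerate end of your family you invoke ``the compatibility of the non-equivariant Kirwan map with the classical Steenrod power on $H^*(BG)$'' as if it were an available lemma, but that compatibility is in substance what the theorem asserts; what should actually happen at that endpoint is that the $\mathbb{Z}/p$-symmetry of the degenerate domain forces the parametrised count to split into a purely topological Steenrod-type count in $H^*(BG)$ followed by a Kirwan-type vortex count, and this splitting is a nontrivial gluing/compactness statement that must be proved, not cited. Second, you appeal to a minimal-Chern-number hypothesis $N>1$ to control $p$-fold covered bubbles, but the survey's own list of Xu's hypotheses -- ``regular quotient'', ``equivariant convexity'', ``contractible target'', ``equivariant monotonicity'' -- does not include such a bound; the bubbling control in the affine vortex setting is handled by equivariant monotonicity together with the asymptotic convexity assumption, and asserting an extra $N>1$ condition misrepresents what the theorem requires.
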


To explain some of the notation, $\kappa$ is a non-equivariant quantum Kirwan map (defined in \cite[Theorem A]{xu}), $V$ is symplectic manifold with a Hamiltonian $K$-action and $V // K$ is the symplectic quotient. The map $Q \Sigma$ is the quantum Steenrod operation, as in \cite{covariant} and Section \ref{subsec:examples}, and $\Sigma_b$ is the associated classical Steenrod operations (contrast this with the notation in Section \ref{subsec:jhlee-curv} wherein this represents the quantum Steenrod operation).

The argument proceeds by considering moduli spaces of holomorphic curves associated to the glued operations corresponding to $Q \Sigma_b \circ \kappa^{eq}$ and $\kappa^{eq} \circ \Sigma_b$, and taking resulting $1$-dimensional moduli spaces to define chain-homotopies between the operations.

\subsection{Usage in proving the exponential-type conjecture - Z. Chen}
\label{subsec:zihong2}

Due to the recentness of the paper \cite{zihong2}, little will be said about this result in the survey. However, the author would like to note that Z. Chen has proved a conjecture (stated in \cite{kkp} and \cite{ggi}), which was originally proved in \cite{pomerleanoseidel} with more strict conditions, in a very succinct way using quantum Steenrod operations. In particular, the conjecture states that for any monotone symplectic manifold, the {\it quantum $t$-connection} of \cite{pomerleanoseidel} has a decomposition into connections with simple poles at $0$ and monodromy being roots of unity (i.e. has exponential type). 

This author's understanding of the proof is as follows: one reduces the problem (through nontrivial means that we will however not explore here) to calculating that some operation involving the $p$-curvature is nilpotent. Then, using ideas surveyed in Section \ref{subsec:jhlee-curv} of this article (with the idea for the extension to this case attributed to P. Seidel), one can reduce this exponential-type conjecture to proving that the sum of the $p$-curvature plus the quantum Steenrod operation is nilpotent. One then proves this nilpotence by demonstrating compatibility between the quantum Steenrod operations and the decomposition of cohomology via eigenvalues of the operator ``multiplication by $c_1$".

\subsection{Fixed point Floer homology power operations - P. Seidel, and then E. Shelukhin and J. Zhao}
\label{subsec:fpfloer}

In the work of \cite{seidel} and \cite{shelukhin-zhao}, a similar invariant to the quantum Steenrod power operation (respectively for $p=2$ and general $p$) is explored. We include it here, even though these are not operations on quantum cohomology, to give some context. The construction in \cite{seidel} is the precursor to the quantum Steenrod square, and the operations detailed in the two papers are related to the quantum Steenrod powers via an equivariant PSS map (see the next section, Section \ref{sec:sec-floer-invariants}). 

Given some symplectomorphism $\phi$, these papers (roughly) explore bounds on $\dim HF^*(\phi^p)^{\mathbb{Z}/p}$ with respect to $\dim HF^*(\phi)$, which should be viewed as some sort of localisation result. They do this through making action estimates, which are used to understand how these power operations (see \eqref{equation:pst})) act on filtered and local fixed point Floer homology. We will not cover these papers in more detail in this survey.

\section{$S^1$-equivariant quantum operations}
\label{sec:s1-equ}
We would be remiss not to mention that the ideas above make some amount of sense for $S^1$ (or tori). We will not go into great detail about what exactly we mean by ``some amount of sense", except by giving a couple of examples.

\subsection{$S^1$-equivariant Seidel map - Todd Liebenschutz-Jones}

Work has been done in this realm by Liebenschutz-Jones, considering $S^1$-equivariant quantum operations. It should be pointed out that, although the result in \cite[Theorem 1.7]{todd} was proven independently, it can be viewed as an $S^1$-version of the covariant constantcy condition of \cite{covariant} using (instead of Section \ref{sec:partial-loc}) the Section \ref{subsec:LbP}. This can be realised as a special case of the result in the following subsection. In particular, it can be seen as yet another instance of a relation arising from considering the $G$-equivariant homology of the space of domains.

\subsection{Pseudocycle localisation}
\label{subsec:LbP}
We give the intuition for a method to conduct localisation using pseudocycles, appearing in the paper by this author \cite{LbP}. 

Broadly, the idea is the following. Given some smooth manifold $X$ with a smooth $S^1$-action, the fixed point set $F$ is a union of even codimension submanifolds. Atiyah-Bott localisation \cite{atiyah-bott} is a relationship between $H^*_{S^1}(X)$ and $H^*_{S^1}(F) \cong H^*(F) \otimes H^*(B S^1) \cong H^*(F)[u]$ for $u$ representing the generator of $H^2(BS^1)$. For our purposes, we will need that the $S^1$-action is semifree (although this is not a requirement in \cite{atiyah-bott}).  

Quillen localisation \cite{quillen} is the equivalent of Atiyah-Bott localisation for $\mathbb{Z}/p$. Our interpretation of Atiyah-Bott localisation will be as Proposition \ref{proposition:partial-localisation} is to Quillen localisation. 

Given some closed $N \subset X$ such that $S^1$ acts on $N$, there is a homology class $[N_i] \in H_{\text{dim}(N) +2i}^{S^1}(X)$ corresponding to the pushforward of the fundamental class under: $$N \times_{S^1} S^{2i+1} \xhookrightarrow{} X \times_{S^1} S^{\infty}.$$ The statement is the following:

\begin{theorem}[LbP]
    \begin{equation} \label{equation:LbP} [X]_{i-1} = \sum_{\alpha : \text{codim}(F_{\alpha}) = 2} \pm [F_{\alpha}]_{i} \pm [B]_{i},\end{equation} where $\{ F_{\alpha} \}$ are the connected components of $F$ and $B_{i+1}$ is defined below (and the $\pm$ can be determined).
\end{theorem}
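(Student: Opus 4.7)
My plan is to mimic the proof of Proposition \ref{proposition:partial-localisation}, replacing the cellular model $\Delta_{2i}, \Delta_{2i+1}$ of $S^{\infty}$ with the analogous one coming from the filtration $S^{1} \subset S^{3} \subset \cdots \subset S^{\infty}$ compatible with the Hopf fibration $S^{\infty} \to \bC P^{\infty} = BS^{1}$. In this model there is a single $2k$-cell $D^{2k}$ downstairs, lifting to a $(2k{+}1)$-cell $\widetilde D^{2k+1}$ upstairs that can be realised as a $2$-disc transverse to the $S^{1}$-orbit direction; a chain-level representative for $[N]_{i}$ is the image of $[N] \otimes [\widetilde D^{2i+1}]$ in $C_{*}(N \times_{S^{1}} S^{\infty})$. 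The goal is to construct a $(\dim X + 2i - 1)$-chain $C$ whose boundary exactly realises the claimed relation \eqref{equation:LbP}.

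First I would use the semifree hypothesis to set up equivariant tubular neighbourhoods: each fixed component $F_{\alpha}$ has a normal bundle that is canonically a complex vector bundle (with $S^{1}$ acting by rotation in each complex line summand), so I can choose disjoint $S^{1}$-invariant closed disc-bundle neighbourhoods $N_{\alpha}$ of each $F_{\alpha}$. Setting $X^{\circ} := X \setminus \bigsqcup_{\alpha} \mathrm{int}(N_{\alpha})$, the $S^{1}$-action on $X^{\circ}$ is free, so $X^{\circ} \times_{S^{1}} S^{2i+1}$ is a smooth manifold with boundary.

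Next I would take $C := X \times_{S^{1}} \widetilde D^{2i+1}$ and analyse $\partial C$. Two geometrically distinct kinds of boundary arise. The first comes from $\partial \widetilde D^{2i+1}$, whose cellular attaching map collapses the free $S^{1}$-fibre and (pushed to the homotopy quotient) produces exactly the class $[X]_{i-1}$. The second comes from the points of $X$ at which the $S^{1}$-action degenerates: over a codim-$2$ component $F_{\alpha}$ with normal line bundle $L_{\alpha}$, an equivariant trivialisation identifies $N_{\alpha} \times_{S^{1}} \widetilde D^{2i+1}$ with a natural $2i$-disc bundle over $F_{\alpha}$ whose core (lying under the point-fixed arc of $\widetilde D^{2i+1}$) realises a chain for $\pm [F_{\alpha}]_{i}$, with sign determined by the complex orientation of $L_{\alpha}$. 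The internal boundaries $\partial N_{\alpha} \times_{S^{1}} \widetilde D^{2i+1}$ (where $S^{1}$ acts freely) cancel with the corresponding boundaries of $X^{\circ} \times_{S^{1}} \widetilde D^{2i+1}$.

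The hardest step will be the identification of the residual term $[B]_{i}$. The codim-$\ge 4$ fixed components contribute local pieces whose leading Atiyah--Bott order in the equivariant parameter $u$ differs from that of the codim-$2$ components, so they contribute chains at several filtration levels simultaneously; one must verify that at the single equivariant filtration level in question they assemble into a well-defined codim-$2$ pseudocycle in $X$ (the obvious candidate being the pseudocycle obtained by pushing down, along the $S^{1}$-quotient, the sphere-bundle boundaries over the higher-codim loci together with any omega-limit corrections) --- this is precisely the role the pseudocycle formalism of \cite{LbP} plays here, and is why the statement is phrased in that language rather than cellular chains alone. Once $[B]_{i}$ is identified and the signs are tracked (from the complex orientations on each $N_{\alpha}$ together with the orientation of $\widetilde D^{2i+1}$), summing the local contributions and taking the boundary gives $\partial C = [X]_{i-1} - \sum_{\alpha : \mathrm{codim}(F_{\alpha})=2} \pm [F_{\alpha}]_{i} \mp [B]_{i}$, which yields \eqref{equation:LbP} after passing to equivariant homology.
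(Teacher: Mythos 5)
There is a genuine gap, and in fact two distinct problems. First, a dimension count: the relation \eqref{equation:LbP} lives in degree $\dim X + 2i - 2$ (since $[X]_{i-1}$ pushes forward $X \times_{S^1} S^{2i-1}$), so a bordism realizing it must have dimension $\dim X + 2i - 1$. Your proposed chain $C := X \times_{S^1} \widetilde D^{2i+1}$ has dimension $\dim X + 2i$ whichever way one reads $\widetilde D^{2i+1}$: if it is $S^1$-invariant of dimension $2i+1$ then the diagonal quotient gives $\dim X + (2i+1) - 1$, while if it is a $2i$-dimensional fundamental domain (not $S^1$-invariant, so $\times_{S^1}$ is not even defined) the pushforward of $X \times \widetilde D$ to $X \times_{S^1} S^\infty$ is still $\dim X + 2i$. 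You are off by one because you have not yet used the $S^1$-direction inside $X$ itself. The paper's proof removes the dimension surplus by passing to the quotient $W/S^1$ of the \emph{free} locus $W := X \setminus TF$ and then choosing a section $s$ of the principal $S^1$-bundle $W \to W/S^1$ over the complement of a codimension-$2$ subset $B' \subset W/S^1$; the bordism is then $(W/S^1 \setminus TB') \times D^{2i}$, which has the correct dimension $(\dim X - 1) + 2i$.

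Second, and relatedly, your identification of the residual term $[B]_i$ is not right. In the paper $B$ has nothing to do with the fixed components of codimension $\ge 4$ (those are absorbed into the omega-limit set of the pseudocycle bordism: the sphere-bundle boundary of $TF_\beta$ with $\text{codim}\,F_\beta \ge 4$ becomes, after $/S^1$ and blow-down, a set of dimension strictly less than $\dim X - 2$). Rather, $B$ is the image of a tubular neighbourhood $TB'$ of the locus $B'$ over which the section $s$ does not extend, i.e.\ it records the Euler class (first Chern class) of the principal $S^1$-bundle $W \to W/S^1$. This is the essential geometric input --- trivializing the free part of the circle action away from a codimension-$2$ obstruction locus --- and it is entirely absent from your proposal. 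Your identification of the codimension-$2$ fixed components contributing $\pm[F_\alpha]_i$, and your use of semifreeness to get complex normal bundles and $S^1$-equivariant tubular neighbourhoods, are in the right spirit, but without the section argument neither the dimensions nor the residual term come out correctly.
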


The proof is broadly the following: consider $W = X \setminus TF$, where $TF$ is some tubular neighbourhood of $F$. Then $\pi: W \rightarrow W / S^1$ is a principal $S^1$-bundle. We know that this can be trivialised by removing some $B' \subset W / S^1$ of codimension $2$. Hence, if $TB'$ is a tubular neighbourhood of $B'$, then there is a section $$s: (W / S^1) \setminus TB' \rightarrow W \setminus \pi^{-1}(TB').$$ Finally, the map $$\left( (W / S^1) \setminus TB'\right) \times D^{2i} \rightarrow \left( W \setminus \pi^{-1}(TB') \right) \times D^{2i} \rightarrow X \times S^{\infty} \rightarrow X \times_{S^1} S^{\infty},$$ is (after removing some codimension $2$ pieces) a pseudocycle bordism demonstrating \eqref{equation:LbP}, where:

\begin{itemize}
    \item $B$ is the image of $TB'$,
    \item $D^{2i} \subset S^{2i+1} \subset S^{\infty}$ is a natural $2i$-disc that is the closure of a fundamental domain for the $S^1$-action on $S^{2i+1}$,
    \item the first map is $s \times id$,
    \item the second map is the product of the blow-down map (collapsing the fibres of the tubular neighbourhood boundaries) with inclusion,
    \item the third map is projection.
\end{itemize}

The power of this method is in the following setup: suppose we are given a universal equivariant moduli space $\mathcal{M}_{eq}$ (with elements of the form $(v,m,u)$ for $v \in S^{\infty}$, $m \in \overline{\mathcal{M}}_{0,k}$ for some $k$, and $u$ a perturbed pseudoholomorphic map with data dependent on $v$, and some evaluation conditions dependent on $m$). Then there is a projection $$\rho: \mathcal{M}_{eq} / S^1 \rightarrow S^{\infty} \times_{S^1} \overline{\mathcal{M}}_{0,k}.$$ We can apply localisation by pseudocycles to $S^{\infty} \times_{S^1} \overline{\mathcal{M}}_{0,k}$ and, assuming $\rho$ is sufficiently nice, pull back the relevant pseudocycles and the pseudocycle bordism between them under $\rho$. This allows us to compare equivariant symplectic invariants as if we were applying a localisation theorem to spaces of domains. Presumably, this method would work more broadly.

\appendix
\section{Mod-$p$ peudocycles}
\label{sec:AppA}
This will proceed much as in the case of standard arguments over $\bZ$ or $\bZ/2$-coefficients.

\begin{remark}
We will use the notation ``c-pseudocycles" rather than ``pseudocycles" because these objects are not necessarily as general as one would like. Any notion of a mod-$p$ pseudocycle should definitely include all mod-$p$ c-pseudocycles, but there may be a more general notion (the ``c" is indicative that we require the boundaries are the images of covering maps). Importantly, the mod-$p$ c-pseudocycles have the relevant important properties that we need to rigorously define things in this paper.
\end{remark}

\subsection{Mod-$p$ c-pseudocycles}

\subsubsection{The definition}
\label{sec:sec1}
The issue with defining mod-$p$ pseudocycles is that they should be based on manifolds with boundaries that ``vanish modulo $p$". This may require one to consider manifolds with corners, and the suchlike. To avoid this, we use a definition that works in our context.

Let $X$ be a smooth oriented $n$-dimensional manifold. We recall that a set $A \subset X$ is ``of dimension $\le l$" if there are a finite collection of maps, their union surjecting onto $A$, with the domains being smooth manifolds of dimension $\le l$.

A $k$ mod-$p$ c-pseudocycle is defined to be a map $f: M \rightarrow X$ such that $M$ is a smooth oriented $k$-dimensional manifold with boundary, $\partial M  = D$, and the omega-limit-set of $f$ is in codimension $2$. Further, either $D = \emptyset$ or:
\begin{enumerate}
  \item there is some smooth $\overline{D} \supset D$ such that $\overline{D} \setminus D$ is of dimension $\le k-2$, in addition to an extension of $f|_D$ to a $(k-1)$-pseudocycle $\overline{f}: \overline{D} \rightarrow X$, with $\overline{f}(\overline{D} \setminus D)$ being contained in the union of the images of some $(k-2)$-pseudocycles,
  \item there is some $\check{D}$ and a $p$-to-$1$ smooth covering map $\overline{D} \rightarrow \check{D}$, such that $f$ descends to a pseudocycle $\check{f} : \check{D} \rightarrow X$ (so i.e. $\check{f} \circ c = \overline{f}$).
  \end{enumerate}
  
\subsubsection{Mod-$p$ c-bordisms}
\label{sec:sec2}
Let $f^i:M^i \rightarrow X$ be $k$ mod-$p$ c-pseudocycles for $i=0,1$. A {\it $(k+1)$ mod-$p$ c-bordism} between $f^0$ and $f^1$ is a $k+1$-dimensional manifold $B$ with boundary $$(M^0 \sqcup (- M^1) ) \cup E,$$ and codimension $2$ corners $$(M^0 \cap E) \sqcup ((- M^1) \cap E),$$ along with a map $h:B \rightarrow X$. Further,

\begin{itemize}
    \item $M^i \cap E = \partial M^i =: D^i$ for $i=0,1$ (with notation as in property $(2)$ of Section \ref{sec:sec1}),
     \item $h|_{M^i} = f^i$ for $i=0,1$,
     \item the omega-limit-set of $h$ is in codimension at least $2$.
 \end{itemize}

 Further, either $E = \emptyset$ or the following holds (taking notation from Pseudocycle properties $(1)$ and $(2)$ in Section \ref{sec:sec1}): 
 \begin{enumerate}
     \item $E$ extends to some smooth $\overline{E}$ such that $\overline{E} \setminus E$ is in dimension $\le k-1$, and there is some $\overline{h}: \overline{E} \rightarrow X$ that is a bordism between the $\overline{D^0}$ and $\overline{D^1}$, satisfying that $\overline{h}(\overline{E} \setminus E)$ can be covered by a union of mod-$p$ c-bordisms of dimension $\le k-1$,
     \item this $\overline{E}$ is a smooth $p$-to-$1$ covering $d: \overline{E} \rightarrow \check{E}$, and there is a $\check{h}: \check{E} \rightarrow X$ so that $\check{h}$ is a bordism between the covering bases $\check{D^i}$ of the covering spaces $\overline{D^i}$, with $\overline{h} = \check{h} \circ d$.
 \end{enumerate}

We can form an obvious equivalence relation e.g. by saying $f^0 \sim f^1$ if there is some $f^t$ for $t = \{0, 1/n, \dots, (n-1)/n, 1 \}$ such that $f^{i/n} \sim f^{(i+1)/n}$ for $i=0,\dots, n-1$. Alternatively, one can combine these $n-1$ pseudocycle c-bordisms into a single one through gluing and smoothing, to obtain a group like $\mathcal{H}_*(X)$ as in \cite{zinger}.

 \subsubsection{Intersections}
 One needs to show that all the stuff you expect to hold with respect to bordisms does indeed hold. 

 In particular, given two transversely intersecting mod-$p$ c-pseudocycles $e,f$ of complimentary dimension, there is a well-defined intersection number $e \cdot f \in \mathbb{Z}/p$. The key point is that, given transversality and the fact that the boundaries are almost pseudocycles themselves, when counting $e \cdot f$ the only possible intersection points comes from the interiors of the manifolds. We know that the number of intersection points is dimension $0$ (e.g. preimage theorem). To see that in fact the set of such points is compact, this is identical to classical pseudocycles (i.e. the omega limit set is in codimension $2$). 

 Further, if $f^0: M^0 \rightarrow X$ and $f^1:M^1 \rightarrow X$ are mod-$p$ bordant via $h: B \rightarrow X$, then $e \cdot f^0  = e \cdot f^1.$ To see why this is, observe that $e \cap h$ defines a $1$-dimensional smooth manifold with boundary (assuming transverse intersections). In particular, suppose $e:N \rightarrow X$. Let $N^o$ be the interior of $N$. If one considers $e|_{N^o}$ (the restriction of $e$ to the open stratum) intersected with $h|_{B^o}$, this defines a $1$-dimensional smooth manifold, with boundary corresponding to $N^o \cap \partial B \cup \partial N \cap B^o$. As with standard theory, the signed count of the endpoints of this moduli space vanishes, hence vanishes modulo $p$.

 Assuming that the pseudocycle $e|_{\overline{\partial N}}$ intersects transversely with the bordism $h|_{\overline{\partial B}}$, this intersection is trivial (for dimension reasons). Hence, if one compactifies the $1$-manifold of $e|_{N^o} \cap h|_{B^o}$ then one obtains elements of $e|_{\partial N} \cap h|_{B^o}$ or $e|_{ N^o} \cap h|_{\partial B} $.

 Consider first $e|_{\partial N} \cap h|_{B^o}$. By the assumptions in Section \ref{sec:sec1}, given generic choices, for degree reasons $h|_{B^o}$ intersects trivially with $e|_{\overline{\partial N} \setminus \partial N}$. Hence $e|_{\partial N} \cap h|_{B^o} = e|_{\overline{\partial N}} \cap h|_{B^o}$. Recall that $e|_{\overline{\partial N}} = \check{e} \circ c$, where $c: \overline{\partial N} \rightarrow \check{\partial N}$, as exists because $e$ is a mod-$p$ c-pseudocycle. Hence $e|_{\overline{\partial N}} \cap h|_{B^o} = \check{e} \circ c \cap h|_{B^o}$, which is divisible by $p$ hence vanishes modulo $p$. 

 Thus, modulo $p$ it is sufficient to state that the sum of the remaining endpoints, those corresponding to $N^o \cap \partial B$, vanish modulo $p$. We know that $\partial B = M^0 \cup (-M^1) \cup E$. Hence, $e \cdot f^0 - e \cdot f^1 + e \cap h|_E$ vanishes modulo $p$. To see that $e \cap h|_E$ vanishes modulo $p$, we notice that this equals $e \cap \overline{h} = e \cap \check{h} \circ d$, which vanishes modulo $p$. Hence $e \cdot f^0 = e \cdot f^1$ modulo $p$, as required.

 \begin{remark}
 The important thing to note here is that there are no corners, i.e. intersections of some boundary of $B$ with the boundary of $e$. Hence, the intersection of $h$ and $e$ is some $1$-dimensional manifold with boundary, and these boundaries consist of $e \cdot f^0, - e \cdot f^1$ and points that come in families of order $p$ (through the existence of a $p$-to-$1$ covering map).
 \end{remark}

 \subsubsection{A note on definitions and the intuition behind them}
 Let us first put here why we use the definition above: suppose one takes a free chain complex over $\bZ$, called $(C,d)$. Then $a \in C$ is closed in the $\bF_p$ homology $H_*(C,\bF_p)$ if and only if $da = p b \in C$ for some $b \in C$. Further, $p \cdot db = d(pb) = d^2 a = 0 \in C$, so $db = 0 \in C$. In particular, we have {\bf on the chain level}, that a closed mod-$p$ chain $a$ satisfies that $da$ is a $p$-fold cover of some closed (in characteristic $0$) chain $b$ (in our circumstances, closed characteristic $0$ chains are represented by genuine pseudocycles, hence our condition on $D$ in Section \ref{sec:sec1}). This provides a justification for the definition of mod-$p$ c-pseudocycles.
 
Suppose instead we attempt to define a ``weak mod-$p$ c-pseudocycle", which didn't need to have its boundary being a $p$-fold covering of some pseudocycle. Instead, suppose its boundary is only bordant (not necessarily equal) to a $p$-fold cover. Relating to the above, this would correspond to some $a \in C$ such that $da = b' \in C$ and $b' = pb + dc$. But if we want that $a$ is closed over $\bF_p$, then over characteristic $0$, we obtain that $p$ divides $b'$, so $p$ divides $dc$, and $da = p(b+ c')$, where $pc' = dc$: hence, this reduces to the case that we have already seen.

Given the above, let us move on to discuss mod-$p$ c-bordisms in Section \ref{sec:sec2}.  

 Unlike mod-$p$ c-pseudocycles, we can reinterpret mod-$p$ c-bordisms in the following way: suppose that $da = p b$ and $d a' = p b'$. To prove that $a \sim a'$ in mod-$p$ homology, one finds $c$ such that $a - a' = dc$ mod-$p$. This is equivalent to saying that $a - a' = dc + p e \in C$ for some $e \in C$. But then $p de = p b - p b' \in C$. In particular, $de = b - b'$. So $e$ here is a ``bordism" between the ``primitive boundaries" of $a$ and $a'$. But more generally, if $d c' =  p e - E$ then $d(c+c') = a - a' - E$. So remembering $c+c'$ (the weak bordism), and $c'$ (the link between the weak bordism and the strong bordism), recovers $c$.

 \subsubsection{Morse mod-$p$ c-pseudocycles}
 Firstly, we fix our closed smooth manifold $X$, with a Morse function $f:X \rightarrow \bR$. Given some $a \in CM^*(M,f; \bF_p)$ with $d_p a=0$ (we use $d_p$ as the differential over $\bF_p$-coefficients), we will construct a mod-$p$ c-pseudocycle associated to $a$ in $M$. 

 In general, given some closed element $a \in CM^*(M^{\times p},f; \bF_p)$, we observe that (lifting up to $\mathbb{Z}$) $d a = p b$ for some $b \in C^*(M^{\times p},f;\mathbb{Z})$. Further, observe that $db = 0$. From $a$ we can form a mod-$p$ c-pseudocycle as follows. We proceed as in \cite{schwarzmorsesingiso}, taking the unstable manifolds of the critical points that appear in $a$, one can gluing their faces in pairs (with opposite orientation) except we will have left over $p$ boundaries corresponding to the unstable manifolds arising from critical point in $b$. This glued object will be the definition of $M$ from the first section above, and $f$ will be the evaluation map. Suppose that we let $b = \sum_{j=1}^k b_j$ for $b_j \in \text{crit}(f)^{\otimes p}$. Then the edges of $M$ correspond to having $p$ copies $W_{j,1}, \dots, W_{j,p} = W(b_j,f)$, where $W(b_j,f)$ is the product of the stable manifolds of the critical points arising in $b_j$. Then let $D_i$ be obtained by taking the union of the $W_{b_1,i}, \dots, W_{b_k,i}$. Because $b$ is closed over $\mathbb{Z}$, if one compactifies each of the $W_{j,1}, \dots, W_{j,p}$ (with objects of dimension at most $|a|-2 = |b|-1$), for each $j$, gluing along respective edges, one obtains $p$ copies of the Morse pseudocycle associated to $b$. The covering map then just collapses these $p$ copies into one. This demonstrates that the construction of Schwarz \cite{schwarz1999equivalences} does indeed yield a mod-$p$ c-pseudocycle, with this glued object being the domain of the evaluation map. A similar argument demonstrates that a mod-$p$ Morse boundary between $a$ and $a'$ yields a mod-$p$ c-bordism. 

 We thus from the above deduce the following lemma:

 \begin{lemma}
 \label{lemma:mod-p-pseudos}
 Given a closed smooth manifold $M$, every $x \in HM_*(M,f;\bF_p)$ is represented by a unique (up to mod-$p$ c-bordism) mod-$p$ c-pseudocycle.
 \end{lemma}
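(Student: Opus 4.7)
The plan is to apply the Schwarz construction of Morse pseudocycles from \cite{schwarz1999equivalences} directly, together with the algebraic observation already worked out in the discussion preceding the lemma. Given a representative Morse cycle $a \in CM_*(M,f;\bF_p)$ of $x$, lift it to $\tilde a \in CM_*(M,f;\bZ)$. Since Morse chains are free and $a$ is closed mod $p$, we have $d\tilde a = pb$ for some $b \in CM_*(M,f;\bZ)$; the identity $p \cdot db = d^2 \tilde a = 0$ combined with torsion-freeness forces $db = 0$ in $\bZ$. Thus $b$ represents a genuine $\bZ$-cycle, and this is the algebraic data that drives the construction.

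For existence, apply Schwarz's construction to $\tilde a$: form the disjoint union (with coefficients) of closures of unstable manifolds $\overline{W^u(x_i)}$ for the critical points $x_i$ appearing in $\tilde a$, and glue codimension-$1$ boundary faces in opposite-orientation pairs according to the matched broken trajectories. Over $\bZ$, the only strata that survive this glueing are those contributing to $d\tilde a = pb$; by the integrality of $b$ they appear in groups of $p$ identical copies, one family for each critical point of $b$. Call the result $N$ with evaluation map $\varphi: N \to M$. Its residual boundary $D$ is precisely $p$ copies of the open Morse pseudocycle of $b$. Compactifying each copy by adjoining closures of lower-dimensional unstable manifolds gives $\overline D \supset D$ with $\overline D \setminus D$ of dimension $\leq \dim a - 2$, and the extension $\overline{\varphi|_D}$ is a genuine $\bZ$-pseudocycle for $b$, verifying condition (1). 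The tautological $p$-to-$1$ quotient $\overline D \to \check D$, together with the Schwarz pseudocycle $\check\varphi$ for $[b] \in H_*(M;\bZ)$, verifies condition (2).

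For uniqueness, if $a,a'$ both represent $x$, lift to $\tilde a, \tilde{a'}$ and choose Morse chains $c, e$ over $\bZ$ with $\tilde a - \tilde{a'} = dc + pe$ (possible because $a-a'$ is $\bF_p$-exact and $CM_*$ is $\bZ$-free). The recipe spelled out immediately before the lemma now reads geometrically: build a parametrised Schwarz bordism $B$ from $c$ (a cobordism between the Schwarz pseudocycles built from $\tilde a$ and $\tilde{a'}$), and a Schwarz bordism $\overline E$ from $e$ (a bordism between the covered residual boundaries), then glue them along their common codimension-$2$ corner $\partial D^0 \sqcup \partial D^1$. The resulting space with evaluation map $h$ is a mod-$p$ c-bordism in the sense of Section \ref{sec:sec2}: the $p$-fold cover $\overline E \to \check E$ is furnished by the $pe$ summand, which precisely measures the failure of $c$ to be a genuine $\bZ$-bordism.

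The principal obstacle is bookkeeping at the codimension-$2$ corners: one must verify that the $p$-fold covering $\overline E \to \check E$ restricts, on each $\partial D^i \subset \overline E$, to the covering $\overline{D^i} \to \check{D^i}$ that comes from the pseudocycles themselves, and that all orientations are compatible under this identification. This is exactly the geometric content of the chain-level identity $d(c+e) = \tilde a - \tilde{a'} \pmod{p\text{-fold covers}}$ unpacked in the note on definitions above; once it is in place, the remainder of the argument is a mod-$p$ translation of the $\bZ$-coefficient case of \cite{schwarz1999equivalences} with no new analytic input.
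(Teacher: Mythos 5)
Your proposal matches the paper's own argument essentially step for step: lift to $\bZ$-coefficients to obtain $d\tilde a = pb$ with $db=0$, apply the Schwarz unstable-manifold gluing to produce the domain whose residual boundary is $p$ copies of the Morse pseudocycle of $b$, and read off the $p$-to-$1$ cover as the collapsing of those copies. Your treatment of uniqueness is slightly more explicit than the paper's (which just says "a similar argument demonstrates..."), but the algebraic decomposition $\tilde a - \tilde{a'} = dc + pe$ you use is exactly the one the paper develops in the "note on definitions" paragraph immediately preceding the lemma, so you have taken the same route.
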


 \begin{corollary}
 \label{corollary:corollary-2}
 Using the $EG$ from Lemma \ref{lemma:lemma-1}, every $X \in H_*(BG; \bF_p)$ may be represented by some mod-$p$ c-pseudocycle (up to mod-$p$ c-bordism).
 \end{corollary}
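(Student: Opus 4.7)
The plan is to reduce to Lemma \ref{lemma:mod-p-pseudos} via the finite-dimensional filtration of $BG$ provided by Lemma \ref{lemma:lemma-1}.

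First, I would invoke Lemma \ref{lemma:lemma-1} to obtain a filtration $B^0 \subset B^1 \subset \cdots \subset BG$ by finite-dimensional smooth closed submanifolds, together with the associated exhaustion $EG = \bigcup_i E^i$ by $G$-invariant finite-dimensional smooth closed submanifolds. Given $X \in H_*(BG;\bF_p)$, the fact that $BG$ is realised here as a CW-complex built from finite cellular pieces of the $B^i$ (as in Corollary \ref{cor:corollary-1}) means there is some index $i$ and a class $X^i \in H_*(B^i;\bF_p)$ whose image under the inclusion $B^i \hookrightarrow BG$ is $X$. Thus the problem is reduced to representing $X^i$ by a mod-$p$ c-pseudocycle in the closed smooth manifold $B^i$, and then pushing forward via the smooth embedding $B^i \hookrightarrow BG$.

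Next, I would apply Lemma \ref{lemma:mod-p-pseudos} to the closed smooth manifold $B^i$: pick any Morse function $f^i : B^i \to \bR$ and a cellular representative of $X^i$ in the Morse complex $CM_*(B^i, f^i;\bF_p)$. Lemma \ref{lemma:mod-p-pseudos} produces a mod-$p$ c-pseudocycle $g : M \to B^i$ representing $X^i$, and the composition with the smooth inclusion $\iota^i : B^i \hookrightarrow BG$ is again a mod-$p$ c-pseudocycle, since the defining properties (omega-limit set in codimension at least $2$, boundary admitting a $p$-fold covering descent to a genuine pseudocycle) are preserved under composition with a smooth embedding.

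For well-definedness up to mod-$p$ c-bordism, suppose $X^i \in H_*(B^i;\bF_p)$ and $X^j \in H_*(B^j;\bF_p)$ both push forward to $X$. Taking $k \ge \max(i,j)$ large enough, the two push-forwards to $B^k$ agree, so the corresponding Morse chains differ by a Morse mod-$p$ boundary in $B^k$. The argument at the end of Section \ref{sec:sec2}, which turns a mod-$p$ Morse boundary into a mod-$p$ c-bordism by gluing unstable manifolds in pairs while leaving $p$ copies of the secondary boundary to form the covering, applies verbatim inside $B^k$, and then composes with $B^k \hookrightarrow BG$ to give a mod-$p$ c-bordism in $BG$.

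The only potential obstacle is checking that ``mod-$p$ c-pseudocycle'' is a sensible notion when the target is the infinite-dimensional CW-complex $BG$ rather than a closed finite-dimensional manifold. Since the definition in Section \ref{sec:sec1} only places requirements on the \emph{domain} $M$ and on the dimension of the omega-limit set (which is measured using the finitely many smooth manifolds that cover it), and since our pseudocycles factor through some finite $B^k$, all such conditions are automatically inherited from the finite-dimensional setting. No genuinely new analysis is needed beyond what is already packaged into Lemma \ref{lemma:lemma-1} and Lemma \ref{lemma:mod-p-pseudos}.
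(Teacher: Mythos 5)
Your proposal is correct and matches the paper's approach, which is a one-line combination of Lemma \ref{lemma:mod-p-pseudos} and Corollary \ref{cor:corollary-1}: you reduce to a finite-dimensional closed smooth stage $B^i$ of the filtration (via Lemma \ref{lemma:lemma-1} and Corollary \ref{cor:corollary-1}), apply the Morse-theoretic Lemma \ref{lemma:mod-p-pseudos} there, and push forward along the inclusion. The extra care you take about well-definedness (stabilising to a common $B^k$) and about what the definition of a mod-$p$ c-pseudocycle means when the target is the infinite-dimensional $BG$ is exactly the implicit content the paper's terse proof leaves to the reader.
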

 \begin{proof}
 Combine Lemma \ref{lemma:mod-p-pseudos} and Corollary \ref{cor:corollary-1}.
 \end{proof}

 \begin{remark}

 Further, every dimension $n-k$ mod-$p$ c-pseudocycle $h$ yields an element of $HM^k(M,f;\bF_p)$, defining $\Psi_h: HM_k(M,f;\bF_p) \rightarrow \bF_p$ by $\Psi_h(a) = h \cdot g_a$. This is well defined up to bordism, by Section \ref{sec:sec2}.

 \end{remark}
 
 \begin{Question}
 Are these the natural definition of a pseudocycle in characteristic $p$? Is the vector space generated by all mod-$p$ $c$-pseudocycles up to mod-$p$ $c$-pseudocycle bordism isomorphic to the singular cohomology with $\mathbb{F}_p$-coefficients.
 \end{Question}
 
 \section{Additivity}
 \label{sec:additivity}
Given $G \le \text{Sym}(n)$, we can define some sort of operation $Q$ with inputs corresponding to orbits of the $G$-action on $\{1,\dots,n \}$ (indexed by $I$), with the space of domains determined by $A \righttoleftarrow G$. However, ideally we would like it to be additive in each of its inputs. It is definitely  additive in $C$, but the question of whether it is additive in the $I$ different tensor powers of $QH^*(M)$ is much more tricky. 

In general, it is nonobvious whether this $Q$ will be multilinear in its factors. To start with, we will prove a proposition that highly simplifies our case:

\begin{proposition}
\label{proposition:coprime-index}
If $H \le G \le \text{Sym}(n)$ and $|G:H|$ is coprime to $p$, with the $G$-action having orbits $O_1,\dots,O_I$ and the $H$-action having orbits $P_1,\dots,P_J$ of $\{1,\dots,n\}$; then if one can define additive operations with $H$, one can also define additive operations with $G$.
\end{proposition}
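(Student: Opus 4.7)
The plan is to use the transfer homomorphism, which exists precisely because $|G:H|$ is coprime to $p$. The natural covering $\pi: EG \times_H A \twoheadrightarrow EG \times_G A$ has finite degree $|G:H|$ invertible in $\mathbb{F}_p$, so the composition of transfer with pullback is multiplication by $|G:H|$. This yields surjectivity of $\pi_*: H_*^H(A;\mathbb{F}_p) \to H_*^G(A;\mathbb{F}_p)$ and (split) injectivity of $\pi^*: H^*_G(A;\mathbb{F}_p) \to H^*_H(A;\mathbb{F}_p)$. This is the main technical engine: any $G$-equivariant structural feature should be detectable at the $H$-level.

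First, I would establish a chain-level analogue of Point \ref{point:point1}. Given a $G$-equivariant operation indexed by $X \in H_*^G(A)$, use surjectivity of $\pi_*$ to pick a lift $\tilde{X} \in H_*^H(A)$ with $\pi_* \tilde{X} = X$, and identify $Q^G(X \otimes -)$ with $Q^H(\tilde{X} \otimes -)$. The subtle point here is the identification of inputs: each $G$-orbit $O_i$ decomposes as a disjoint union $P_{j_1} \sqcup \dots \sqcup P_{j_{k_i}}$ of $H$-orbits, and the $i$-th $G$-input $x_i$ corresponds to the diagonal assignment $y_{j_1} = \dots = y_{j_{k_i}} = x_i$ in the $H$-operation. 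Next, I would translate the hypothesis that ``additive operations can be defined for $H$'' into the concrete form from Section \ref{subsec:additivity}: the existence of a class $t_H \in H^*(BH;\mathbb{F}_p)$ whose cup-product action on $H^*_H(A;\mathbb{F}_p)$ is injective. The corresponding $t_G \in H^*(BG;\mathbb{F}_p)$ would be produced as a cohomological norm $\prod_{g \in G/H} g \cdot t_H$ over coset representatives, or directly as a class whose pullback is a power of $t_H$; injectivity of $t_G \cup$ on $H^*_G(A;\mathbb{F}_p)$ then follows from the splitting of $\pi^*$ together with the hypothesis on $t_H$.

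The main obstacle I anticipate is the diagonal restriction: the $G$-input $x_i$ replaces multiple $H$-inputs simultaneously, so even if $Q^H$ is multi-additive, its diagonal restriction produces non-linear cross-terms. The crucial step will be to argue that these cross-terms are annihilated after passing to the reduced operation $H^*_{G-\mathrm{red}, t_G}(A)$ of Section \ref{subsec:additivity}. Concretely, I would show that the kernel of $t_G \cup$ on $H^*_G(A)$ contains precisely the obstructions arising from the non-additivity of the power map $\mathrm{pow}_G$ restricted from $\mathrm{pow}_H$, by using the compatibility $\pi^*(t_G) = \mathrm{norm}_H^G(t_H)$ (or an analogous formula) to translate vanishing on the $H$-side to vanishing on the $G$-side. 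This bootstrapping — rather than the transfer argument itself — is where the real technical content sits, and it is essential that $|G:H|$ is invertible mod $p$ so that no $p$-torsion obstruction is introduced when descending the norm through $\pi^*$.
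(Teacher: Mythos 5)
Your central device — the transfer homomorphism, giving surjectivity of $\pi_*: H_*^H \to H_*^G$ and split injectivity of $\pi^*$ — is exactly the engine of the paper's proof: the paper writes the transfer as $i_*(x) = \sum_{g_i} g_i x$ over coset representatives, observes $\pi_* \circ i_* = |G:H|\cdot\mathrm{id}$, and thereby lifts any $C \in H_*(BG;\bZ/p)$ to $H_*(BH;\bZ/p)$. However, after that the two proofs diverge sharply. The paper stays entirely at the level of moduli spaces: it assembles a commutative diagram whose lower composite runs through $\mathrm{pow}_H$ and whose upper composite runs through $\mathrm{pow}_G$, with the triangle $Q_j(\pi_*C')(a) = Q_j(|G:H|C')(\pi^*a)$ for $a \in \mathrm{Im}(\mathrm{pow}_G)$ commuting simply because \emph{the same auxiliary data} may be used to cut out both moduli spaces; additivity of the lower route is then read off directly, without any cohomology class $t$, any norm, or any reduced operations. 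Your proof instead re-encodes the hypothesis via Assumption \ref{assumption:shift-assumption} (the existence of a faithfully-acting $t_H$), produces $t_G$ by an Evens-type norm, and then quotients to a reduced operation. That route is closer in spirit to the later Lemma \ref{lemma:subgroup-passes-to-whole-group} and Corollary \ref{corollary:coprime-index} in the appendix — but note the paper there uses the additive transfer $i^*(t)$ as the preimage, not a multiplicative norm, and neither the lemma nor the corollary invokes reduced operations. More seriously, if you pass to $H^*_{G\text{-red},t_G}$ you are no longer proving the statement as given: the reduced operation may lose information, so showing it is additive does not by itself establish that \emph{additive operations can be defined with $G$} in the sense of Section \ref{subsec:additivity}.

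On the positive side, you have correctly flagged a real subtlety that the paper's sketch proof glosses over: the map $\Phi$ on the left of the paper's diagram is a diagonal (each $G$-input $x_i$ is copied across every $H$-orbit $P_j \subset O_i$), so precomposing a multi-additive $\bar{Q}^H_j$ with $\Phi$ is not automatically additive — the ``cross-terms'' you describe are precisely the failure of linearity of the diagonal. The paper asserts ``additivity of the lower route'' without making explicit why these cross-terms vanish; that argument works trivially only when $I=J$ (so $\Phi$ is a bijection of tensor factors), which is indeed the case in the motivating example after the proposition (both $\mathrm{Sym}(p^2)$ and $\bZ/p \wr \bZ/p$ act transitively, $I=J=1$). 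For $I<J$ the cancellation ought to follow from the $N_G(H)/H$-symmetry of the transferred class $i_*(C)$ together with a $p$-divisibility of the orbit sizes of the mixed tensors, but this is not spelled out in the paper and would have been worth making explicit — your instinct that this is ``where the real technical content sits'' is sound, even if the reduced-operation workaround is not the right repair.
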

\begin{proof}[Sketch proof]
Observe that there is a map $\pi_*:H_*(BH;\bZ/p) \rightarrow H_*(BG;\bZ/p)$ by quotienting. As each $P_j$ is contained in some $O_i$, there is an induced map $$ \bigotimes_{i=1}^I H^*(M) \rightarrow \bigotimes_{j=1}^J H^*(M),$$ contrapositive with respect to the map $J \mapsto I$ defined by $j \mapsto i$ if $P_j \subset O_i$. Then the below map commutes for any $C$: 
\begin{equation}\label{equivpssisom}
\xymatrix{
\bigotimes_{i=1}^I H^*(M)
\ar@{->}^-{\text{pow}_G}[r]
\ar@{->}^-{\Phi}[dd]
&
H^*_{G}(C^*(M)^{\otimes n})
\ar@{->}^-{Q_j(\pi_* C')}[dr]
\ar@{->}_-{\pi^*}[dd]
&
\\ 
&
&
QH^*(M)
\\
\bigotimes_{j=1}^J H^*(M)
\ar@{->}^-{\text{pow}_{H}}[r]
&
H^*_{H}(C^*(M)^{\otimes n})
\ar@{->}_-{Q_j(|G:H|C)}[ur]
&
}
\end{equation}

Firstly, the square commutes trivially. 

The triangle commutes because $$Q_j(\pi_*(C'))(a) = Q_j(|G:H|C')(\pi^* a),$$ for any choice of $a \in \text{Im}(\text{pow}_G)$. To see this, observe that one may choose exactly the same auxiliary data to define the moduli spaces used in both sides of the equality. 

By commutativity of the diagram, plus additivity of the lower route, we observe that the upper route must also be additive. 

Further, by coprimality of $p$ to $|G:H|$, there is a lifting map $$i_* : H_*(BG;\bZ/p) \rightarrow H_*(BH;\bZ/p), \quad i_*(x) = \sum_{g_i} g_i x,$$ where $\{ g_i H \}$ is some choice of cosets of $H$ in $G$. As, $\pi_* \circ i_* = |G:H| \cdot \text{id}$, so $\tfrac{1}{|G:H|} \pi_*$ is surjective. In particular, any element $C$ of $H_*(BG;\bZ/p)$ may be lifted to $C' = i_*(C) \in H_*(BH;\bZ/p)$, such that $|G:H| C = \pi_*(C')$, so additivity holds for all such $C$.
\end{proof}

The above proposition actually simplifies our situation very well. Suppose we are working with $\bF_p$-coefficients. Then our group $G$ will contain a Sylow $p$-subgroup of order $p^j$, such that $|G|/p^j$ is coprime to $p$. In particular, Proposition \ref{proposition:coprime-index} allows us to descend to that Sylow $p$-subgroup. Henceforth in this section, we only need to consider the case where $G$ is a $p$-group.

\begin{example}
The group $\bZ/p \int \bZ/p \le \text{Sym}(p^2)$, and $p$ divides both groups to order $p+1$, hence the index is coprime to $p$. Further, both groups act transitively. Hence, in order to define additive $\bZ/p$-operations with $G=\text{Sym}(p^2)$, one only needs to consider the wreath product $\bZ/p \int \bZ/p$ (and this has additive operations via Theorem \ref{theorem:additive-covered-detective}). It is in this way that we can prove a version of the Adem Relation for quantum Steenrod $p$-th powers for all primes $p$ (see \cite[Theorem *]{wilkins18}).
\end{example}

In order to ensure additivity, we will proceed by the following, fairly standard, method: first, we recall from Theorem \ref{theorem:main-theorem} that we may extend $Q_j(C)$ so that it is actually defined on the equivariant chain complex,
$$Q_j(C): (C^*(M^{\times n}) \otimes C^*(EG))^G \rightarrow C^*(M).$$

It is worth mentioning here that, by definition, if $t \in C^*(EG)$ then \begin{equation} \label{eqn:pi-psi} \pi^* \Psi(t) = \sum_{g \in G} g \cdot t,\end{equation} recalling $\Psi$ from Section \ref{sec:stepB}.

Consider the following sufficient (but not necessarily necessary) condition on a pair $A \righttoleftarrow G$:

\begin{assumption}[Additivity Assumption on $G$]
\label{assumption:additivity-assumption}

Assume that for each closed $C \in C_i(EG \times_G A)$, there is some $q \ge 0$, and closed $T_C \in C_{i+q}(EG \times_G A)$, and $\tilde{t}_C \in C^q(EG)$ such that $d \Psi(\tilde{t}_C) = 0$, such that for each $j$:
\begin{enumerate}
    \item $Q_j(C)(\underline{x} \otimes 1) = Q_j(T_C)(\underline{x} \otimes \tilde{t}_C)$ for all $\underline{x}$,
    \item $d \Psi( \tilde{t}_C ) = 0$.
\end{enumerate}
\end{assumption}

\begin{remark}
We observe that, using the coherence equation \eqref{equation:strapping}, the given assumption amounts to the idea that given any $C \in C_i(EG \times_G A)$, there is some $T_C$ and $\tilde{t}_c$ such that $C = T_C \cap \Psi(\tilde{t}_C)$.  
\end{remark}

\begin{theorem}
Given Assumption \ref{assumption:additivity-assumption}, the map $Q$ is multilinear on the level of cohomology.
\end{theorem}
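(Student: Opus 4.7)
The plan is to prove the stronger chain-level statement that $Q_j(C)(\Delta) = 0$ modulo $p$, where $\Delta$ is the cochain-level additivity defect; this immediately gives multilinearity of each $\overline{Q_j}(C)$ and hence of $Q$. By Proposition \ref{proposition:coprime-index}, one may assume $G$ is a $p$-group. Fix cochain representatives of the inputs and test linearity in (say) the first factor, forming
\[
\Delta := \text{pow}_G(\ldots, x+y, \ldots) - \text{pow}_G(\ldots, x, \ldots) - \text{pow}_G(\ldots, y, \ldots),
\]
a $G$-invariant cochain in $C^*(M^{\times n})$ that splits as a sum of $G$-orbit sums $\sigma_\mathcal{O} = \sum_{h \in G/\mathrm{Stab}(\alpha_\mathcal{O})} h \alpha_\mathcal{O}$ of mixed tensor representatives $\alpha_\mathcal{O}$. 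Since no such $\alpha_\mathcal{O}$ is $G$-fixed and $G$ is a $p$-group, each orbit size $|G/\mathrm{Stab}(\alpha_\mathcal{O})|$ is divisible by $p$.

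Next, by Assumption \ref{assumption:additivity-assumption} combined with the strapping equation \eqref{equation:strapping}, rewrite $Q_j(C)(\sigma_\mathcal{O}) = Q_j^{ex}(T_C)(\sigma_\mathcal{O} \otimes \tilde{t}_C)$; condition (2) of the assumption ensures $\Psi(\tilde{t}_C)$ is closed, so $T_C \cap \Psi(\tilde{t}_C)$ is a well-defined closed chain. The critical structural input is that $Q_j^{ex}(T_C)$ is $G$-equivariant in each of its two tensor factors separately. For the $EG$-factor, $Q_j^{ex}(T_C)(g\tilde{t}_C \otimes \underline{x}) = Q_j^{ex}(T_C)(\tilde{t}_C \otimes \underline{x})$ because $\Psi(g\tilde{t}_C) = \Psi(\tilde{t}_C)$ (any two cells of $EG$ in the same $G$-orbit project to a common cell of $BG$), combined with \eqref{equation:strapping}. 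For the $M^{\times n}$-factor, $Q_j^{ex}(T_C)(\tilde{t}_C \otimes g\underline{x}) = Q_j^{ex}(T_C)(\tilde{t}_C \otimes \underline{x})$ follows from the $G$-invariance of $T_C \cap \Psi(\tilde{t}_C)$ as a chain on $EG \times_G A$, using the moduli space bijection $(v, m, u) \mapsto (gv, g^{-1}m, u)$ analyzed at the end of Section \ref{sec:stepA}.

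Combining the two equivariances, $Q_j^{ex}(T_C)(\tilde{t}_C \otimes h\alpha_\mathcal{O})$ is independent of $h$, so
\[
Q_j(C)(\sigma_\mathcal{O}) = \sum_h Q_j^{ex}(T_C)(\tilde{t}_C \otimes h\alpha_\mathcal{O}) = |G/\mathrm{Stab}(\alpha_\mathcal{O})| \cdot Q_j^{ex}(T_C)(\tilde{t}_C \otimes \alpha_\mathcal{O}) \equiv 0 \pmod{p}.
\]
Summing over orbits gives $Q_j(C)(\Delta) = 0$ at the chain level, hence $\overline{Q_j}(C)$ is additive on cohomology; the same argument in each slot yields the claimed multilinearity of $Q$. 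The main obstacle is the careful justification of the two equivariance properties of $Q_j^{ex}(T_C)$: the $EG$-side version is the less obvious one, since it is not literal $G$-invariance of the input but rather an artefact of $\Psi$ collapsing $G$-orbits of cells, and it relies essentially on the strapping equation for its validity; the $M^{\times n}$-side version is a change-of-lift argument that must be reconciled with the sign/orientation conventions from \eqref{equation:convention-for-g-action}. Once both are in hand, the vanishing is an automatic consequence of orbit-size divisibility for $p$-groups.
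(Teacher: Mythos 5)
Your approach is genuinely different from the paper's, and it contains two gaps that the paper's route is specifically designed to avoid.

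First, the reduction ``by Proposition~\ref{proposition:coprime-index}, one may assume $G$ is a $p$-group'' is not available here. That proposition transfers additivity \emph{from} a coprime-index subgroup $H$ \emph{up to} $G$: its hypothesis is that the $H$-operations are already known to be additive. In the theorem you are proving, Assumption~\ref{assumption:additivity-assumption} is imposed on $G$, and it is nowhere shown to descend to a Sylow $p$-subgroup; so you cannot apply the theorem to the Sylow subgroup to get the input needed for the transfer. The paper's proof does \emph{not} make this reduction -- it only reduces to the transitive case via Section~\ref{subsubsec:splitting-orbits}, which is a genuine ``without loss of generality'' because orbits can be treated independently. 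Without the $p$-group hypothesis, your orbit-size step fails: $|G/\mathrm{Stab}(\alpha_{\mathcal{O}})|$ divides $|G|$ but need not be divisible by $p$ (take $G=\mathbb{Z}/6$, $p=2$, and $\epsilon=(0,1,0,1,0,1)$ with orbit size $3$).

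Second, the ``second equivariance'' $Q_j^{ex}(T_C)(\tilde{t}_C\otimes g\underline{x}) = Q_j^{ex}(T_C)(\tilde{t}_C\otimes\underline{x})$ is the load-bearing step of your argument, and it is only justified by pointing to the bijection $(v,m,u)\mapsto(gv,g^{-1}m,u)$ from the end of Step~A. But that bijection is set up there as a \emph{boundary} analysis -- its purpose is to show that when all inputs $x_i$ within an orbit are equal, the boundary contributions from $\partial B^{wc}$ occur in families of size divisible by $p$. It does not, as written, give a chain-level identification $\mathcal{M}_j(B;\underline{x})\cong\mathcal{M}_j(B;g\underline{x})$ for arbitrary (non-symmetric) inputs $\underline{x}$. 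The paper's proof deliberately routes around this: after moving the $G$-sum from $\underline{x}$ onto the $EG$-factor (the diagonal equivariance, which one could also worry about but which is what the paper asserts in the key ``observe'' step), it uses that $\pi^*\Psi(\tilde{t}_C)=\sum_g g\tilde{t}_C$ is a \emph{closed} cochain on the contractible $EG$, hence exact, and then invokes Theorem~\ref{theorem:main-theorem} (chain-map property) to conclude that $Q_j(T_C)(\underline{x}\otimes\pi^*\Psi(\tilde{t}_C))$ is exact. This gives vanishing on cohomology without ever requiring the chain-level count to vanish, and without needing $G$ to be a $p$-group; the only use of $p\mid |G|$ is through the mod-$p$ setup, not through orbit-size divisibility. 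If you can prove your second equivariance rigorously at the chain level, your argument would give a stronger conclusion (chain-level vanishing); but as stated, the justification is incomplete, and in any case the $p$-group reduction needs to be removed or independently justified.
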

\begin{proof}
Without loss of generality, assume that $G$ is transitive (by Section \ref{subsubsec:splitting-orbits} we may treat each orbit separately). It suffices to show that for any closed $a,b \in C^*(M)$, then on cohomology $Q_j(C)((a+b)^{\otimes n} - a^{\otimes n} - b^{\otimes n})$ vanishes for all $j \ge 0$, and $C \in H_*(EG \times A)$. Observe (by abusing notation) that with the extended definition $Q_j(C)((a+b)^{\otimes n} - a^{\otimes n} - b^{\otimes n}) = Q_j(C)(((a+b)^{\otimes n} - a^{\otimes n} - b^{\otimes n}) \otimes 1),$ the latter of which by the assumption equals $Q_j(T_C)(\left((a+b)^{\otimes n} - a^{\otimes n} - b^{\otimes n}\right) \otimes \tilde{t}_C)$. Notice also that $$(a+b)^{\otimes n} - a^{\otimes n} - b^{\otimes n} = \sum_{g \in S_n} \sum_{j = 1}^{n-1} g \cdot a^{\otimes j} \otimes b^{\otimes n-j} = \sum_{g \in G} \sum_{g_i} \sum_{j = 1}^{n-1} g \cdot g_i a^{\otimes j} \otimes b^{\otimes n-j},$$ where the $G g_i$ are the right cosets of $G$ in $S_n$. We will call $\underline{x} := \sum_{g_i} \sum_{j = 1}^{n-1} g_i a^{\otimes j} \otimes b^{\otimes n-j} $.

Next, observe that $$Q_j(T_C)\left(\sum_{g \in G} g \cdot\underline{x} \otimes t_C\right) = Q_j(T_C)\left(\underline{x}  \otimes \sum_{g \in G} t_C\right) = Q_j(T_C)\left(\underline{x} \otimes \pi^* \Psi(t_C)\right).$$ Notice that $d \Psi(t_C) = 0$ hence $d \pi^* \Psi(t_C) = 0$, hence by acyclicity $\pi^* \Psi(t_C) = d m$ for some $m \in C^*(EG)$. By closedness of $a,b$  $$Q_j(T_C)((\underline{x} \otimes \pi^* \Psi(t_C)) = Q_j(T_C)(d_{eq}(\underline{x} \otimes m)).$$ By Theorem \ref{theorem:main-theorem}, the $Q_j(T_C)$ are chain maps, hence $Q_j(T_C)(d_{eq}(\underline{x} \otimes b))$ is exact.
\end{proof}

In practise, this assumption is sufficient to prove that additive operations are possible for $\bZ/2$:

\begin{example}
Suppose that $n=2$, $p=2$ and $G=\bZ/2 = \langle \sigma \rangle$. Then there is a cellular decomposition of $EG$ into discs of the form $D^{i,\pm}$ of dimension $i$, where $d D^{i,\pm} = D^{i-1,+} + D^{i-1,-}$ (neglecting signs as we work with $\bF_p$-coefficients). In this case, we can define for our additivity assumption that $Q_j(D^{i,\pm})(\underline{x} \otimes 1) = Q_j(D^{i+1,\pm})(\underline{x} \otimes (D^{1,+})^*)$. Noticing that $\Psi((D^{1,+})^*))$ has vanishing boundary, then if $A$ is fixed by $G$, this does indeed satisfy the additivity assumption and the coherence equations, hence is additive. 

Notice that the additivity assumption in this case is the requirement that $Q_j$ is ``invariant under multiplication by the degree-$1$ generator of $H^1(\bR P^{\infty}) = H^1(B \bZ /2)$".
\end{example}

For our purposes, the additivity assumption is too general to work with. However, by looking at the previous example, we see that if we assume we can make a global choice of $\tilde{t}_C$ (i.e. independent of $C$) then that is enough to prove additivity. 

\begin{assumption}[Shift Assumption]
\label{assumption:shift-assumption}
We assume that $G \subset \text{Sym}(n)$ is a group, with some $t \in H^*(BG)$ such that multiplication by $t$ is faithful on $H^*_G(A; \bF_p)$. Further, $t$ has some representative $\tilde{t} \in C^*(EG)$ (consisting of lifted cocells, and representative in the sense that $t = [\Psi(\tilde{t})]$).
\end{assumption}

Observe that there is a natural map $H^*(BG) \rightarrow H^*_G(A)$ using the associated Serre fibration, and we call the image of $t$ under this map $t_A$. We will call the faithful map $$H^*_G(A) \rightarrow H^*_G(A), \quad x \mapsto x \cup t_A,$$ by the name of the ``shift map". Notice in particular that the above assumption is independent of $n$.

We now show that Assumption \ref{assumption:shift-assumption} is indeed a stronger assumption, hence implies that additive operations can be defined.

\begin{proposition}
Assumption \ref{assumption:shift-assumption} implies Assumption \ref{assumption:additivity-assumption}.
\end{proposition}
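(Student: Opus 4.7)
The plan is to take $\tilde{t}_C := \tilde{t}$ \emph{uniformly} (independent of $C$), with $q := |t|$, and then to use finite-dimensional Kronecker duality between cup and cap products to exhibit the required closed $T_C$.

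First I would dispense with condition (2) of Assumption \ref{assumption:additivity-assumption}: since $t = [\Psi(\tilde{t})] \in H^q(BG)$ is an honest cohomology class, $\Psi(\tilde{t})$ is a cocycle on $BG$, so $d \Psi(\tilde{t}_C) = 0$ holds for free. For condition (1), the coherence equation \eqref{equation:strapping} rewrites the right-hand side as
\[
Q_j(T_C)(\underline{x} \otimes \tilde{t}) = Q_j(T_C \cap \Psi(\tilde{t}))(\underline{x} \otimes 1),
\]
so, by additivity of $Q_j$ in its first argument (Definition \ref{definition:moduli-def}), it suffices to find a closed $T_C \in C_{i+q}(EG \times_G A)$ for which $C - T_C \cap \Psi(\tilde{t})$ is a boundary in $C_*(EG \times_G A)$.

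Next I would invoke duality. The required existence amounts to surjectivity, in degree $i$, of the cap product $\cap\, t_A : H_{i+q}^G(A; \bF_p) \to H_i^G(A; \bF_p)$. By Corollary \ref{corollary:corollary-to-lemma-1}, the homotopy quotient $EG \times_G A$ admits a CW filtration in which each fixed dimension contains only finitely many cells (since $EG$ has finitely many cells per dimension and $A$ is a finite cell complex). Consequently $H^*_G(A; \bF_p)$ and $H^G_*(A; \bF_p)$ are degreewise finite-dimensional and mutually $\bF_p$-dual under the Kronecker pairing, with respect to which
\[
\langle \alpha , c \cap t_A \rangle = \langle \alpha \cup t_A , c \rangle.
\]
Injectivity of $\cup\, t_A$ -- precisely the Shift Assumption -- therefore transports to surjectivity of $\cap\, t_A$.

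Finally, to descend from this homology statement back to the chain-level condition, I would pick closed $T_C$ with $[C] = [T_C \cap \Psi(\tilde{t})]$ and write $C - T_C \cap \Psi(\tilde{t}) = dD$ for some $D \in C_{i+1}(EG \times_G A)$. The chain-map identity $d \circ Q_j(D) = Q_j(D) \circ d + Q_j(dD)$ from the proof of Theorem \ref{theorem:main-theorem} then shows that $Q_j(C)(\underline{x} \otimes 1)$ and $Q_j(T_C \cap \Psi(\tilde{t}))(\underline{x} \otimes 1)$ differ by the exact cochain $d(Q_j(D)(\underline{x} \otimes 1))$ whenever $\underline{x}$ is closed. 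Thus condition (1) of Assumption \ref{assumption:additivity-assumption} holds in precisely the cohomological sense needed to feed closed inputs such as $(a+b)^{\otimes n} - a^{\otimes n} - b^{\otimes n}$ through the preceding additivity theorem. The hard part will be the Kronecker duality step: one must check carefully that the cell-count of Corollary \ref{corollary:corollary-to-lemma-1} genuinely gives degreewise finiteness of $H^*_G(A; \bF_p)$ so that injectivity of $\cup\, t_A$ dualises to surjectivity of $\cap\, t_A$; everything else is a formal consequence of \eqref{equation:strapping} and the chain-map property built into Theorem \ref{theorem:main-theorem}.
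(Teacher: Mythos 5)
Your proposal is correct and is essentially the same route as the paper's, just spelled out in more detail: both proofs hinge on the coherence equation \eqref{equation:strapping} plus the Kronecker duality between $\cup\,t_A$ and $\cap\,t_A$. The paper compresses this to ``we choose $C'$ from the dual of $C \cup t$,'' which is exactly the transpose argument you make explicit. You are also more careful than the paper about the ambient group (the paper writes $H_*(BG)$ where $H_*^G(A)$ is meant) and about what precisely is being matched on the chain level versus the cohomology level.

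One point worth noting: the ``hard part'' you flag at the end is actually a non-issue. Over a field $\bF_p$, universal coefficients gives $H^i_G(A;\bF_p) \cong \operatorname{Hom}(H_i^G(A;\bF_p),\bF_p)$ with no finiteness hypothesis, and under this identification $\cup\, t_A$ is the transpose of $\cap\, t_A$. For any linear map $g$ of $\bF_p$-vector spaces, $g^*$ injective is equivalent to $g$ surjective unconditionally, so injectivity of $\cup\,t_A$ gives surjectivity of $\cap\,t_A$ without any degreewise finite-dimensionality check. (That finiteness does hold by Corollary \ref{corollary:corollary-to-lemma-1}, but invoking it is a detour.) So your proof closes cleanly; you can delete the caveat.
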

\begin{proof}
We recall by definition that on the level of chains: $$Q_j(C')(\underline{x} \otimes \tilde{t}) := Q_j(C' \cap \Psi(\tilde{t}))(\underline{x} \otimes 1).$$ Further, every element $[C] \in H_*(B G)$ can be obtained as $[C'] \cap \Psi(\tilde{t})$ for some $C' \in H_*(BG)$ (this is because the cup product with $t$ has no kernel, and we choose $C'$ from the dual of $C \cup t$). Note also that by assumption $dt = d\Psi(\tilde{t}) =0$.

Hence, this choice is sufficient to ensure that Assumption \ref{assumption:additivity-assumption} holds.
\end{proof}

In essence, Assumption \ref{assumption:shift-assumption} is just the case of Assumption \ref{assumption:additivity-assumption} where there is some $t$ that works independently of $C$ (and such that $T_C$ is always determined by cap product of $C$ with this $t$).


The following lemma now follows:

\begin{lemma}
\label{lemma:product-of-indicators}
If $t,t'$ both demonstrate that $A \righttoleftarrow G$ satisfies Assumption \ref{assumption:shift-assumption}, then so too does $t \cup t'$.
\end{lemma}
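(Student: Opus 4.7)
The plan is to verify both clauses of Assumption~\ref{assumption:shift-assumption} for $t \cup t'$: first, that cup product with $t \cup t'$ is faithful on $H^*_G(A; \bF_p)$; second, that there exists a lifted-cocell representative $\widetilde{t \cup t'} \in C^*(EG)$ with $[\Psi(\widetilde{t \cup t'})] = t \cup t'$.

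The faithfulness clause follows from associativity and composition of injective maps. For any $x \in H^*_G(A; \bF_p)$, associativity of the cup product gives $(t \cup t') \cup x = t \cup (t' \cup x)$; if this vanishes, faithfulness of cup product with $t$ forces $t' \cup x = 0$, and then faithfulness of cup product with $t'$ forces $x = 0$. Hence cup product with $t \cup t'$ is injective on $H^*_G(A; \bF_p)$.

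For the representative clause, let $\tilde t, \tilde t' \in C^*(EG)$ be the given lifted-cocell representatives of $t$ and $t'$, so that $\alpha := \Psi(\tilde t)$ and $\alpha' := \Psi(\tilde t')$ are cocycles in $C^*(BG)$ representing $t$ and $t'$ respectively. The cellular cup product $\alpha \cup \alpha' \in C^*(BG)$, defined via any cellular approximation of the diagonal $BG \to BG \times BG$, is then a cocycle representing $t \cup t'$. To lift this through $\Psi$, observe that $\Psi$ is surjective on cellular cochains: every dual $[D]^* \in C^*(BG)$ of a cell $[D]$ in $BG$ equals $\Psi(D^*)$ for any choice of lift $D$ of $[D]$ to $EG$. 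Picking such a lift for each summand of $\alpha \cup \alpha'$ yields a lifted-cocell cochain $\widetilde{t \cup t'} \in C^*(EG)$ with $\Psi(\widetilde{t \cup t'}) = \alpha \cup \alpha'$, as required. The only potential subtlety---the well-definedness of the cellular cup product on $C^*(BG)$---is a standard result from algebraic topology; an alternative route that takes $\widetilde{t \cup t'} := \tilde t \cup \tilde t'$ directly in $C^*(EG)$ would require a separate compatibility check between $\Psi$ and cup products, which the surjectivity argument above neatly sidesteps.
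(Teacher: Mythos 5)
Your proof is correct, and on the second clause it takes a genuinely different route from the paper's. The paper computes the cup product $\tilde t \cup \tilde t'$ directly in $C^*(EG)$ and asserts that ``if one considers the chain-level definition of the simplicial cup product, $t \cup t' = [\Psi(\tilde t \cup \tilde t')]$''---that is, it implicitly relies on a compatibility between $\Psi$ and the cellular cup product, which is the exact point you flag as the subtlety worth avoiding. Your alternative---push $\tilde t, \tilde t'$ down via $\Psi$ to cocycles on $BG$, form the cup product there, then use surjectivity of $\Psi$ on the dual basis of cocells to pick a lift back to $C^*(EG)$---is logically cleaner, since what Assumption~\ref{assumption:shift-assumption} actually asks for is \emph{some} lifted-cocell chain whose image under $\Psi$ is a cocycle representing $t \cup t'$, and surjectivity of $\Psi$ delivers that without any multiplicativity claim. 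The trade-off is that the paper's route, if the compatibility of $\Psi$ with cup products does hold, gives a concrete and canonical representative $\tilde t \cup \tilde t'$ rather than an arbitrary lift; yours is safer but less explicit. On the first clause (faithfulness), your associativity argument is the same one the paper compresses to ``immediate.''
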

\begin{proof}
Faithfulness of the cup product is immediate. Further, $t$ is represented by $\tilde{t}$ and $t'$ by $\tilde{t'}$ as in Assumption \ref{assumption:shift-assumption}. Note further that if one considers the chain-level definition of the simplicial cup product, that $t \cup t' = \Psi(\tilde{t} \cup \tilde{t'})$ and is closed. 
\end{proof}

\begin{example}
\label{example:z-p-additive}
Assumption \ref{assumption:shift-assumption} holds for $G = \bZ/p \subset \text{Sym}(p)$ for prime $p$, where we use $\tilde{t} \in C^2(B \bZ/p; \bF_p)$ where $\tilde{t}$ represents any generator of $H^2(B\bZ/p;\bF_p)$, and where $A$ is fixed by $\bZ/p$. Note that we do not in fact always need this assumption that $A$ is fixed: it was proven by D. Kim in \cite{kim} that one can also consider the e.g. case where $A = \overline{\mathcal{M}}_{0,1+p}$ or $A = \overline{\mathcal{M}}_{0,p}$. 
\end{example}

We now observe a way of passing Assumption \ref{assumption:shift-assumption} upwards from a subgroup. Suppose that $H \le G$. We may use $EH = EG$. In particular:

\begin{lemma}
\label{lemma:subgroup-passes-to-whole-group}
Suppose that $H \le G$, with $H$ satisfying Assumption \ref{assumption:shift-assumption} for some $A$. Then there is the map induced by the quotient map on topological spaces $\pi^*: H^*(BG;\bF_p) \rightarrow H^*(BH;\bF_p)$. Let $t \in H^*(BH;\bF_p)$ be as from Assumption \ref{assumption:shift-assumption}. If $t$ has a preimage $\hat{t} \in H^*(BG;\bF_p)$ under $\pi^*$, such that $\hat{t}$ that acts faithfully by the cup product on cohomology, then $A \righttoleftarrow G$ satisfies Assumption \ref{assumption:shift-assumption} and it is demonstrated by $\hat{t}$.
\end{lemma}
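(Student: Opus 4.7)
The plan is to verify the two clauses of Assumption \ref{assumption:shift-assumption} for the pair $(G, \hat{t})$. The first clause, that $\hat{t} \cup -$ act faithfully on $H^*_G(A; \bF_p)$, is immediate from the hypothesis. So the task reduces to constructing a cochain $\tilde{\hat{t}} \in C^*(EG)$, built from lifted cocells, such that $\Psi_G(\tilde{\hat{t}})$ is closed and represents $\hat{t}$ in $H^*(BG; \bF_p)$.

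First I would fix the $G$-invariant cellular decomposition of $EG$ supplied by Lemma \ref{lemma:lemma-1} and the discussion in Section \ref{sec:stepB}. Each cell $D \subset BG$ then has exactly $|G|$ lifts to $EG$, and by its defining dual-basis property, $\Psi_G$ sends the dual of any such lift $\tilde{D}^*$ to the dual cocell $D^*$. Next, pick a cellular cocycle representative $\tau = \sum_i a_i D_i^* \in C^*_{\mathrm{cell}}(BG; \bF_p)$ of $\hat{t}$; such a $\tau$ exists because cellular cohomology computes $H^*(BG; \bF_p)$ (and one can work in a sufficiently large finite skeleton $B^N$ for any fixed class). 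For each $i$, choose arbitrarily a lift $\tilde{D}_i \subset EG$, and set
\[ \tilde{\hat{t}} := \sum_i a_i \tilde{D}_i^* \in C^*(EG). \]
This is by construction a sum of lifted cocells, and $\Psi_G(\tilde{\hat{t}}) = \sum_i a_i \Psi_G(\tilde{D}_i^*) = \sum_i a_i D_i^* = \tau$. Since $\tau$ is closed and represents $\hat{t}$, we get both $d \Psi_G(\tilde{\hat{t}}) = 0$ and $[\Psi_G(\tilde{\hat{t}})] = \hat{t}$, as required.

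The main conceptual point, and the reason this reduction is essentially tautological, is that $\Psi$ was defined precisely so that lifted cocells in $EG$ pair with $G$-cells in $BG$. The hypothesis $\pi^* \hat{t} = t$ and the chain-level witness $\tilde{t}$ for $t$ on the $H$-side do not enter the argument itself; they only serve in the statement to guarantee the existence of a class $\hat{t} \in H^*(BG; \bF_p)$ with the required faithfulness property. I do not anticipate any significant obstacle. The only subtlety is to use a cellular decomposition of $BG$ that is coherent with the $G$-quotient structure on $EG$ and for which the dual-basis interpretation of $\Psi_G$ holds on the nose, both of which are supplied by Lemma \ref{lemma:lemma-1} and the setup of Section \ref{sec:stepB}.
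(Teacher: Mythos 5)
Your proof is correct, and it takes a genuinely different route from the paper's (one-line) sketch. The paper's sketch asserts that one may take \emph{the same} $\tilde{t}\in C^*(EG)=C^*(EH)$ to represent both $t$ (via $\Psi_H$) and $\hat{t}$ (via $\Psi_G$). Taken literally, this fails: e.g.\ with $H=\bZ/2\le G=\bZ/4$, $p=2$, using the standard $\bZ/4$-equivariant cell decomposition of $E G = S^\infty$ with cells $g^k\Delta_i$, the cocycles on $BH$ in degree $2$ are spanned by $[\Delta_2]_H^*+[g\Delta_2]_H^*$ while those on $BG$ are spanned by $[\Delta_2]_G^*$; if $\tilde{t}=\sum_k a_k(g^k\Delta_2)^*$ then $\Psi_H(\tilde{t})$ is a cocycle representing the generator of $H^2(BH;\bF_2)$ forces $a_0+a_2=a_1+a_3=1$, whereas $\Psi_G(\tilde{t})=[\Delta_2]_G^*$ forces $\sum_k a_k=1$; these are incompatible mod $2$. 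Your construction sidesteps this by building a fresh $\tilde{\hat t}$: choose a cellular cocycle $\tau$ for $\hat{t}$ in $C^*_{\mathrm{cell}}(BG)$ (possible by working in a finite skeleton) and lift cocell-by-cocell; by the defining dual-basis property of $\Psi_G$ this gives $\Psi_G(\tilde{\hat t})=\tau$ exactly, hence closed and representing $\hat t$. This is both correct and more informative: it shows the chain-representative clause of Assumption \ref{assumption:shift-assumption} is automatic for \emph{every} class in $H^*(BG;\bF_p)$, so the lemma reduces entirely to the faithfulness hypothesis, and the hypothesis $\pi^*\hat{t}=t$ (together with the $H$-side representative) plays no logical role in the proof --- it only guides where to look for a candidate $\hat t$. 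You correctly and explicitly flag this last point, which is a clarification the paper's sketch does not make.
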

\begin{proof}[Sketch proof]
We observe that we may use the same $\tilde{t} \in C^*(EG)$ to represent both $t_{\pi^* x}$ and $\hat{t}$. The result is then immediate.
\end{proof}

\begin{corollary}
\label{corollary:coprime-index}
If $H \le G$ is of index coprime to $p$, then if $A \righttoleftarrow H$ satisfies Assumption \ref{assumption:shift-assumption} then so too does $A \righttoleftarrow G$.
\end{corollary}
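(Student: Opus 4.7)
The plan is to invoke Lemma \ref{lemma:subgroup-passes-to-whole-group}: I will produce some $\hat{t}\in H^*(BG;\bF_p)$ acting faithfully on $H^*_G(A;\bF_p)$ by cup product. The coprime-index hypothesis supplies a transfer retraction to every restriction map, which is the main leverage.

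First I would record the dual of the transfer argument from the proof of Proposition \ref{proposition:coprime-index}. Since $[G:H]$ is coprime to $p$, both $\pi^*:H^*(BG;\bF_p)\hookrightarrow H^*(BH;\bF_p)$ and $\pi^*_A:H^*_G(A;\bF_p)\hookrightarrow H^*_H(A;\bF_p)$ are injective, with left inverse $\tfrac{1}{[G:H]}\pi_*$ given by the transfer. Consequently, to check that a candidate $\hat t$ acts faithfully on $H^*_G(A;\bF_p)$ it suffices to show that $\pi^*(\hat t)$ acts faithfully on $H^*_H(A;\bF_p)$: for any nonzero $x\in H^*_G(A;\bF_p)$ one has $\pi^*_A(\hat t\cup x)=\pi^*(\hat t)\cup\pi^*_A(x)\neq 0$, and therefore $\hat t\cup x\neq 0$.

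Second, I would construct $\hat t$ as a norm. In the case where $H\trianglelefteq G$, the Hochschild--Serre spectral sequence for $BH\to BG\to B(G/H)$ with $\bF_p$-coefficients collapses (using that $H^*(B(G/H);\bF_p)$ is concentrated in degree zero when $p\nmid[G:H]$), yielding $H^*(BG;\bF_p)\cong H^*(BH;\bF_p)^{G/H}$. Set
$$\hat t \;:=\; \prod_{g\in G/H} g\cdot t \;\in\; H^*(BH;\bF_p)^{G/H}\cong H^*(BG;\bF_p).$$
The $G/H$-action on $H^*_H(A;\bF_p)$ (inherited from the $G$-action on $A$) intertwines the cup-product action of $H^*(BH;\bF_p)$, so each translate $g\cdot t$ remains a non-zero-divisor on $H^*_H(A;\bF_p)$, and hence so does the product. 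A compatible chain-level representative $\tilde{\hat t}\in C^*(EG)$ is then supplied by the cellular models of Lemma \ref{lemma:lemma-1}, completing the verification of Assumption \ref{assumption:shift-assumption}.

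The main obstacle is the non-normal case. Direct normalisation fails because $G/H$ is not a group, and reductions via the core $K=\bigcap_g gHg^{-1}$ do not preserve coprimality of the index ($[G:K]$ may pick up factors of $p$ from $[G:H]!$). My fallback plan is to work with the Mackey double-coset formula
$$\pi^*\pi_*(s)\;=\;\sum_{HgH\in H\backslash G/H}\mathrm{cor}^H_{H\cap{}^{g}H}\,\mathrm{res}^{{}^{g}H}_{H\cap{}^{g}H}\,c_g(s)$$
applied to a sufficiently high power $s=t^N$, taking $\hat t:=\pi_*(s)\in H^*(BG;\bF_p)$; one then needs to verify that the right-hand side is still a non-zero-divisor on $H^*_H(A;\bF_p)$. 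Pinning down such an $N$ -- equivalently, controlling the interference between the diagonal Mackey term $t^N$ and the off-diagonal conjugate contributions -- is the technical heart of the argument and is where I expect most of the work to sit.
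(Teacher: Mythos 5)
Your plan bifurcates into a normal and a non-normal case, and you explicitly leave the non-normal case unfinished, flagging the Mackey double-coset bookkeeping as the technical heart and stating that ``pinning down such an $N$ \dots is where I expect most of the work to sit.'' That is a genuine gap: as written the proposal does not prove the corollary, and the norm construction $\hat t=\prod_{g\in G/H}g\cdot t$ together with the Hochschild--Serre collapse $H^*(BG;\bF_p)\cong H^*(BH;\bF_p)^{G/H}$ uses normality of $H$ in a way you do not know how to remove. In addition, even in the normal case you still owe a verification of the chain-level part of Assumption~\ref{assumption:shift-assumption} -- you need a cocycle $\tilde{\hat t}\in C^*(EG)$, not just a class in $H^*(BG;\bF_p)$, and the appeal to ``the cellular models of Lemma~\ref{lemma:lemma-1}'' does not by itself produce one compatible with the lifted-cocell requirement and with $\hat t=[\Psi(\tilde{\hat t})]$.

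The paper's own proof is a one-liner and does not make the normal/non-normal distinction at all: it takes $\hat t$ to be the image of $t$ under the transfer map dual to the $i_*$ constructed in Proposition~\ref{proposition:coprime-index} (the wrong-way map coming from the $|G:H|$-fold covering $BH\to BG$, which exists because $|G:H|$ is prime to $p$), and then feeds that $\hat t$ into Lemma~\ref{lemma:subgroup-passes-to-whole-group}. This is strictly simpler than what you set up -- no spectral sequence, no Evens norm, no Mackey formula -- and it is coset-representative-level explicit, which is exactly what makes the chain-level side of Assumption~\ref{assumption:shift-assumption} tractable. Your opening observation that ``the coprime-index hypothesis supplies a transfer retraction'' is the right instinct; the intended construction is to use that transfer directly as $\hat t$, rather than turning to the multiplicative norm. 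If you want to salvage your route, you would still have to supply the non-normal argument and the chain-level representative, at which point the approach is considerably heavier than the one the paper sketches.
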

\begin{proof}
The preimage of the defining $t$ is $i^*(t)$ as in Proposition \ref{proposition:coprime-index}.
\end{proof}

The lemma will allow us to prove the following:

\begin{lemma}
\label{lemma:general-n-cyclic-groups}
Assumption \ref{assumption:shift-assumption} holds for any finite Abelian group $G$, with $A = \{ pt \}$. Further, it is demonstrated by any non-nilpotent element $t \in H^{*}(BG;\bF_p)$. \end{lemma}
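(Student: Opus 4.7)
The plan is to first reduce to the case of an Abelian $p$-group, then analyze the resulting ring structure of $H^*(BG;\bF_p)$ explicitly, and finally produce the cochain-level representative from a standard product cellular model.

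First I would decompose $G = G_p \times G_{p'}$ with $G_p$ the Sylow $p$-subgroup. Since $|G:G_p|$ is coprime to $p$, Corollary \ref{corollary:coprime-index} reduces the verification of Assumption \ref{assumption:shift-assumption} to the case of $G_p$; equivalently, since $H^*(BG_{p'};\bF_p)$ is concentrated in degree $0$, Künneth gives $H^*(BG;\bF_p) \cong H^*(BG_p;\bF_p)$ as rings, and non-nilpotent elements correspond under this identification.

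Next I would write $G_p \cong \prod_{i=1}^r \bZ/p^{k_i}$ and apply Künneth to obtain a tensor product decomposition of $H^*(BG_p;\bF_p)$, where each factor $H^*(B\bZ/p^{k_i};\bF_p)$ is a polynomial algebra (possibly) tensored with an exterior algebra on a single odd-degree generator (with the degenerate purely polynomial case $p=2, k_i=1$). Collecting these, one can write
$$H^*(BG_p;\bF_p) \cong \bF_p[u_1,\dots,u_r] \otimes \Lambda(e_{j_1},\dots,e_{j_s}),$$
where the $u_i$ are the polynomial generators and the $e_{j_k}$ the remaining exterior ones. Expand $t = \sum_{S} t_S\, e_S$ with $t_S$ in the polynomial subring. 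The nilradical consists of those $t$ with $t_\emptyset = 0$, so ``non-nilpotent'' is equivalent to $t_\emptyset \neq 0$. If $ts = 0$ with $s = \sum_T s_T e_T$, I would argue by induction on $|U|$ that $s_U=0$: the $e_\emptyset$-coefficient of $ts$ is $t_\emptyset s_\emptyset$, which forces $s_\emptyset = 0$ in the polynomial domain; and assuming $s_T=0$ for all $|T|<|U|$, the $e_U$-coefficient of $ts$ collapses to $\pm t_\emptyset s_U$ (all other ordered partitions $S \sqcup T = U$ contribute $s_T=0$), giving $s_U = 0$. Hence cup product with $t$ is injective on $H^*_G(\{\mathrm{pt}\}) = H^*(BG_p;\bF_p)$, which is the faithfulness required by Assumption \ref{assumption:shift-assumption}.

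For the cochain-level part, take $EG_p = \prod_i E\bZ/p^{k_i}$ with the product of the standard cellular structures from Example \ref{example:z-mod-p}, and dualize to obtain explicit cocycle representatives for each polynomial and exterior generator. Lemma \ref{lemma:product-of-indicators} ensures that cup products of these also demonstrate the shift assumption, so it suffices to realize monomials cellularly, which is automatic in the product model; and $d\Psi(\tilde{t})=0$ because $\Psi$ commutes with the differential and each building block is a cocycle. The main obstacle in this plan is the algebraic induction showing that non-nilpotent elements act faithfully: the reduction to an Abelian $p$-group and the cellular realization are essentially formal, but the mixed polynomial-exterior structure genuinely requires the inductive argument above to rule out the subtle zero-divisor phenomena that arise once exterior generators are present.
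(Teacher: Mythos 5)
Your proof is correct and takes a genuinely different, more direct route than the paper's, which is worth comparing carefully.

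The paper's proof first handles $\bZ/p^i$ via Lemma \ref{lemma:subgroup-passes-to-whole-group} applied to $\bZ/p \le \bZ/p^i$, and then for $G = \bigoplus_k \bZ/n_k$ writes ``$t = \oplus t_k$'' and claims some $t_k$ is non-nilpotent, so that Lemma \ref{lemma:subgroup-passes-to-whole-group} can be applied to the cyclic factor $\bZ/n_k \le G$. You instead reduce directly to the Sylow $p$-subgroup, identify $H^*(BG_p;\bF_p)$ as $\bF_p[u_1,\dots,u_r]\otimes\Lambda(e_{j_1},\dots,e_{j_s})$ via K\"unneth, characterize non-nilpotence as $t_\emptyset \neq 0$, and prove faithfulness of cup product with $t$ by induction on $|U|$ over the coefficients $s_U$ of a hypothetical $s$ with $ts=0$, before realizing $\tilde{t}$ in the product cellular model of $EG_p$.

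Your route is not merely stylistically different: it repairs a real gap. The paper's claim that some restriction $t_k$ to a cyclic factor is non-nilpotent is false in general. For $G = (\bZ/p)^2$ and $t = u_1 u_2$, the element $t$ is non-nilpotent, but its restriction to either $\bZ/p$ factor vanishes (since $u_2 \mapsto 0$ under the first restriction and $u_1 \mapsto 0$ under the second). So the paper's reduction to Lemma \ref{lemma:subgroup-passes-to-whole-group} via a cyclic factor cannot literally go through for arbitrary non-nilpotent $t$, and the paper never explicitly verifies the faithfulness hypothesis of that lemma either. Your inductive argument on exterior monomial degree is the content that is missing, and it is correct as written: the $e_\emptyset$-coefficient of $ts$ is $t_\emptyset s_\emptyset$, forcing $s_\emptyset = 0$ in the polynomial domain, and assuming $s_T = 0$ for $|T|<|U|$ the $e_U$-coefficient of $ts$ collapses to $\pm t_\emptyset s_U$, so $s_U = 0$. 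The cochain-level part is also handled correctly: a general non-nilpotent $t$ is a linear combination of monomials, each of which is realized cellularly in the product model, and Lemma \ref{lemma:product-of-indicators} together with linearity gives a valid $\tilde{t}$ with $d\Psi(\tilde{t}) = 0$.

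One small comment: you should be slightly careful with the ``degenerate purely polynomial case $p=2, k_i=1$,'' where the polynomial generator $x$ has degree $1$ rather than $2$, but as you note the faithfulness induction uses only the algebra structure $\bF_p[\underline{u}]\otimes\Lambda(\underline{e})$ and not the grading, so this does not affect the argument.
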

\begin{proof}
We first observe that an Abelian group $G$ is a direct sum of cyclic groups. First we prove the lemma for $G = \bZ/n$. From Corollary \ref{corollary:coprime-index}, we may assume that $n=p^i$. We will apply Lemma \ref{lemma:subgroup-passes-to-whole-group} with $H=\bZ /p$ and $G=\bZ/ p^i$. Observe that $\bZ/p$ satisfies the assumption, as in Example \ref{example:z-p-additive}, demonstrated by $\tilde{t}_i \in C^{2i}(E \bZ/p ;\bF_p)$ where $\tilde{t}_i$ represents any generator $t^i \in H^{2i}(B\bZ/p; \bF_p)$, which are the non-nilpotent elements of cohomology. Similarly, one notices that $H^{*}(B\bZ/p; \bF_p)$ is generated by an infinite order element of index $2$, and an element of index $1$ that squares to $0$. Picking a similarly nice cell decomposition of $E \bZ/p^i = S^{\infty} \subset \mathbb{C}^{\infty}$, we see that the generator $t \in H^2(B \bZ/p; \bF_p)$ lifts to $\tilde{t} \in C^2(E S^{\infty};\bF_p)$ which descends to $\hat{t} \in H^2(B\bZ/p^i;\bF_p)$ that is also closed. 

Suppose now that $G = \bigoplus_k \mathbb{Z}/n_k$ is a direct sum of cyclic groups. We suppose that $t$ is any non-nilpotent element of $H^*(BG;\bF_p)$. Then we can write $t = \oplus t_k$ with $t_k \in H^*(B \bZ/n_k;\bF_p)$. In particular, there is some $k$ such that $t_k$ is non-nilpotent. The result follows by Lemma \ref{lemma:subgroup-passes-to-whole-group}.
\end{proof}

Now we will use the preceding lemmas to prove that Assumption \ref{assumption:shift-assumption} holds for a large family of groups. First we recall a definition \cite[Section IV, Definition 4.2]{ademmilgram}:

\begin{definition}
Let $G \le \text{Sym}(n)$ be a finite group. Then we say $G \le \text{Sym}(n)$ has its cohomology detected by Abelian subgroups if there is a family $\{ H_i \xhookrightarrow{f_i} G \}_{i \in I}$ such that the following map is injective:

$$\bigsqcup_{i \in I} f_i^*: H^*(BG;\bF_p) \rightarrow \bigsqcup_{i \in I} H^*(BH_i;\bF_p).$$

\end{definition}

We thus prove the following additivity property for certain finite groups whose cohomology is detected by Abelian subgroups:

\begin{theorem}
\label{theorem:additive-covered-detective}
Suppose that the cohomology of $G$ is detected by Abelian subgroups, and $H^*(BG;\bF_p)$ contains some $t$ such that multiplication by $t$ is faithful. Then $\{ pt \} \righttoleftarrow G$ satisfies Assumption \ref{assumption:shift-assumption}.
\end{theorem}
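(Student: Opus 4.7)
The strategy is to combine the two hypotheses of the theorem to reduce to the finite Abelian case (already handled by Lemma~\ref{lemma:general-n-cyclic-groups}) and then promote the result back up to $G$ via Lemma~\ref{lemma:subgroup-passes-to-whole-group}. The witness in $H^*(BG;\bF_p)$ for Assumption~\ref{assumption:shift-assumption} will be the given $t$ itself.

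First, I would observe that since multiplication by $t$ has no kernel, $t$ must be non-nilpotent: if $t^{n+1}=0$ with $n$ minimal such that $t^n \neq 0$, then $t \cdot t^n = 0$ would contradict faithfulness. Let $\{f_i : H_i \hookrightarrow G\}_{i \in I}$ be the detecting family of Abelian subgroups, which we may take to be finite since $G$ itself is finite (at worst keep one representative per subgroup). The direct sum map $\bigoplus_i f_i^* : H^*(BG;\bF_p) \to \bigoplus_i H^*(BH_i;\bF_p)$ is then injective, and because $t^n \neq 0$ for every $n \geq 1$, for each $n$ there is some index $i_n \in I$ with $f_{i_n}^*(t)^n = f_{i_n}^*(t^n) \neq 0$. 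Since $I$ is finite, pigeonhole produces a single $i_0 \in I$ with $f_{i_0}^*(t)^{n} \neq 0$ for infinitely many $n$, forcing $f_{i_0}^*(t) \in H^*(BH_{i_0};\bF_p)$ to be non-nilpotent.

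Next, Lemma~\ref{lemma:general-n-cyclic-groups} applied to the finite Abelian group $H_{i_0}$ with the non-nilpotent element $f_{i_0}^*(t)$ shows that $\{pt\} \righttoleftarrow H_{i_0}$ satisfies Assumption~\ref{assumption:shift-assumption}, demonstrated by $f_{i_0}^*(t)$. To pass back to $G$, I would invoke Lemma~\ref{lemma:subgroup-passes-to-whole-group} with $H = H_{i_0}$: the restriction map $\pi^* : H^*(BG;\bF_p) \to H^*(BH_{i_0};\bF_p)$ of that lemma coincides with $f_{i_0}^*$, so the given $t$ is a preimage of $f_{i_0}^*(t)$, and faithfulness of multiplication by $t$ on $H^*(BG;\bF_p)$ is precisely the remaining hypothesis of the lemma. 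The conclusion is that $\{pt\} \righttoleftarrow G$ satisfies Assumption~\ref{assumption:shift-assumption}, demonstrated by $t$.

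The main delicate step will be the pigeonhole argument producing a single Abelian subgroup $H_{i_0}$ for which \emph{all} powers $f_{i_0}^*(t)^n$ remain nonzero; this fails without both the finiteness of the detection family and the non-nilpotency of $t$. Once $H_{i_0}$ is identified, the remainder is a mechanical application of the two previously established lemmas, with the faithfulness hypothesis on $t$ being re-used verbatim in the final invocation of Lemma~\ref{lemma:subgroup-passes-to-whole-group}.
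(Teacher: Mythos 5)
Your proposal follows essentially the same route as the paper's own proof: use the non-nilpotency of $t$ (forced by faithfulness) together with a pigeonhole argument over the detecting Abelian subgroups to find a single $H_{i_0}$ on which $f_{i_0}^*(t)$ is non-nilpotent, then apply Lemma~\ref{lemma:general-n-cyclic-groups} to $H_{i_0}$ and Lemma~\ref{lemma:subgroup-passes-to-whole-group} to lift back to $G$. The only differences are cosmetic: you spell out why $t$ is non-nilpotent and why the detecting family may be taken to be finite, both of which the paper leaves implicit.
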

\begin{proof}
We will show that the assumptions of Lemma \ref{lemma:subgroup-passes-to-whole-group} hold for some Abelian subgroup. Because the cohomology of $G$ is covered by Abelian subgroups there is some $H_i \le G$ such that $f^*_i(t)$ is non-zero. Further, as $t$ must be non-nilpotent, for each $j$ there is a $i(j)$ such that $f^*_{i(j)}(t^j) \neq 0$. As the set $\{ H_i \}$ is finite, there must be some $i$ and $j_1 < j_2 < \dots$ such that $i(j_1) = i(j_2)= \dots = i$ and such that $f^*_{i}(t^j) \neq 0$. In particular, $f_i^*(t)$ is non-nilpotent, and as $H_i$ is Abelian then by Lemma \ref{lemma:general-n-cyclic-groups} this $f_i^*(t)$ demonstrates $H_i$ satisfying Assumption \ref{assumption:shift-assumption}: further, $t$ is a preimage satisfying the assumptions of Lemma \ref{lemma:subgroup-passes-to-whole-group}, so $G$ satisfies Assumption \ref{assumption:shift-assumption} as required.
\end{proof}

\bibliographystyle{plain}
\bibliography{biblio}

\begin{thebibliography}{10}

\bibitem{abouzaidbottman}
Mohammed Abouzaid and Nathaniel Bottman.
\newblock Functoriality in categorical symplectic geometry.
\newblock {\em Bulletin of the American Mathematical Society}, 2023.

\bibitem{ademmilgram}
Alejandro Adem and Richard~J. Milgram.
\newblock {\em Cohomology of finite groups}, volume 309 of {\em Grundlehren der
  Mathematischen Wissenschaften [Fundamental Principles of Mathematical
  Sciences]}.
\newblock Springer-Verlag, Berlin, second edition, 2004.

\bibitem{atiyah-bott}
M.~F. Atiyah and R.~Bott.
\newblock The moment map and equivariant cohomology.
\newblock {\em Topology}, 23(1):1--28, 1984.

\bibitem{bai2022arnold}
Shaoyun Bai and Guangbo Xu.
\newblock Arnold conjecture over integers.
\newblock {\em arXiv preprint arXiv:2209.08599}, 2022.

\bibitem{betz}
Martin~S. Betz.
\newblock {\em Operad representations in {M}orse theory and {F}loer homology}.
\newblock ProQuest LLC, Ann Arbor, MI, 1995.
\newblock Thesis (Ph.D.)--Stanford University.

\bibitem{bourgeoisoancea}
Fr{\'e}d{\'e}ric Bourgeois and Alexandru Oancea.
\newblock ${S}^1$-equivariant symplectic homology and linearized contact
  homology.
\newblock {\em International Mathematics Research Notices},
  2017(13):3849--3937, 2017.

\bibitem{lf-pseudocycles}
Spencer Cattalani and Aleksey Zinger.
\newblock {P}seudocycles for {B}orel-{M}oore {H}omology.
\newblock preprint arXiv:2209.10036, 2022.

\bibitem{cgg}
Erman \c{C}ineli, Viktor~L. Ginzburg, and Ba\c{s}ak~Z. G\"{u}rel.
\newblock From pseudo-rotations to holomorphic curves via quantum {S}teenrod
  squares.
\newblock {\em Int. Math. Res. Not. IMRN}, (3):2274--2297, 2022.

\bibitem{zihong2}
Zihong Chen.
\newblock On the exponential type conjecture.
\newblock {\em arXiv preprint arXiv:2409.03922}, 2024.

\bibitem{zihong}
Zihong Chen.
\newblock Quantum {S}teenrod operations and {F}ukaya categories.
\newblock {\em arXiv preprint arXiv:2405.05242}, 2024.

\bibitem{cohnor}
Ralph Cohen and Paul Norbury.
\newblock Morse field theory.
\newblock {\em Asian J. Math.}, 16(4):661--711, 2012.

\bibitem{fukaya}
Kenji Fukaya.
\newblock {M}orse homotopy and its quantization.
\newblock In {\em Geometric topology ({A}thens, {GA}, 1993)}, volume~2 of {\em
  AMS/IP Stud. Adv. Math.}, pages 409--440. Amer. Math. Soc., Providence, RI,
  1997.

\bibitem{ggi}
Sergey~V Galkin, Hiroshi Iritani, and Vasily Golyshev.
\newblock Gamma classes and quantum cohomology of fano manifolds: Gamma
  conjectures.
\newblock {\em Duke Mathematical Journal}, 165(11):2005--2077, 2016.

\bibitem{hofersalamon}
Helmut Hofer and Dietmar~A Salamon.
\newblock Floer homology and {N}ovikov rings.
\newblock In {\em The Floer memorial volume}, pages 483--524. Springer, 1995.

\bibitem{kkp}
Ludmil Katzarkov, Maxim Kontsevich, and Tony Pantev.
\newblock Hodge theoretic aspects of mirror symmetry.
\newblock In {\em Proc. Symp. Pure Math.}, volume~78, pages 87--174, 2008.

\bibitem{kim}
Dain Kim and Nicholas Wilkins.
\newblock The $\mathbb{Z}/p$-equivariant cohomology of genus zero
  {D}eligne-{M}umford space with $1+p$ marked points.
\newblock {\em arXiv preprint arXiv:2212.06618}, 2022.

\bibitem{jae1}
Jae~Hee Lee.
\newblock Flat endomorphisms for mod $ p $ equivariant quantum connections from
  quantum {S}teenrod operations.
\newblock {\em arXiv preprint arXiv:2303.08949}, 2023.

\bibitem{jae3}
Jae~Hee Lee.
\newblock Quantum {S}teenrod operations of symplectic resolutions.
\newblock {\em arXiv e-prints}, pages arXiv--2312, 2023.

\bibitem{jae2}
Jae~Hee Lee.
\newblock {\em Equivariant quantum connections in positive characteristic}.
\newblock PhD thesis, Massachusetts Institute of Technology, 2024.

\bibitem{todd}
Todd Liebenschutz-Jones.
\newblock Shift operators and connections on equivariant symplectic cohomology.
\newblock {\em arXiv preprint arXiv:2104.01891}, 2021.

\bibitem{jhols}
Dusa McDuff and Dietmar Salamon.
\newblock {\em {$J$}-holomorphic curves and quantum cohomology}, volume~6 of
  {\em University Lecture Series}.
\newblock American Mathematical Society, Providence, RI, 1994.

\bibitem{jholssympl}
Dusa McDuff and Dietmar Salamon.
\newblock {\em {$J$}-holomorphic curves and symplectic topology}, volume~52 of
  {\em American Mathematical Society Colloquium Publications}.
\newblock American Mathematical Society, Providence, RI, second edition, 2012.

\bibitem{moshtang}
Robert~E. Mosher and Martin~C. Tangora.
\newblock {\em Cohomology operations and applications in homotopy theory}.
\newblock Harper \& Row, Publishers, New York-London, 1968.

\bibitem{PSS}
Sergey Piunikhin, Dietmar Salamon, and Matthias Schwarz.
\newblock Symplectic {F}loer-{D}onaldson theory and quantum cohomology.
\newblock In {\em Contact and symplectic geometry ({C}ambridge, 1994)},
  volume~8 of {\em Publ. Newton Inst.}, pages 171--200. Cambridge Univ. Press,
  Cambridge, 1996.

\bibitem{pomerleanoseidel}
Daniel Pomerleano and Paul Seidel.
\newblock The quantum connection, fourier-laplace transform, and families of
  a-infinity-categories.
\newblock {\em arXiv preprint arXiv:2308.13567}, 2023.

\bibitem{quillen}
Daniel Quillen.
\newblock The {S}pectrum of an {E}quivariant {C}ohomology {R}ing: I.
\newblock {\em Annals of Mathematics}, 94(3):549--572, 1971.

\bibitem{semon}
Semon {R}ezchikov.
\newblock {R}ational {Q}uantum {C}ohomology of {S}teenrod {U}niruled
  {M}anifolds.
\newblock preprint arXiv: arXiv:2111.09157, 2021.

\bibitem{schwarzmorsesingiso}
Matthias Schwarz.
\newblock Equivalences for {M}orse homology.
\newblock In {\em Geometry and topology in dynamics ({W}inston-{S}alem, {NC},
  1998/{S}an {A}ntonio, {TX}, 1999)}, volume 246 of {\em Contemp. Math.}, pages
  197--216. Amer. Math. Soc., Providence, RI, 1999.

\bibitem{schwarz1999equivalences}
Matthias Schwarz.
\newblock Equivalences for {M}orse homology.
\newblock {\em Contemporary Mathematics}, 246:197--216, 1999.

\bibitem{seidel}
Paul Seidel.
\newblock The equivariant pair-of-pants product in fixed point {F}loer
  cohomology.
\newblock {\em Geom. Funct. Anal.}, 25(3):942--1007, 2015.

\bibitem{seidelformal}
Paul Seidel.
\newblock {F}ormal groups and quantum cohomology.
\newblock {\em Geometry \& Topology}, 27(8):2937--3060, 2023.

\bibitem{covariant}
Paul Seidel and Nicholas Wilkins.
\newblock Covariant constancy of quantum {S}teenrod operations.
\newblock {\em J. Fixed Point Theory Appl.}, 24(2):Paper No. 52, 38, 2022.

\bibitem{egorshel1}
Egor Shelukhin.
\newblock Pseudo-rotations and {S}teenrod squares.
\newblock {\em J. Mod. Dyn.}, 16:289--304, 2020.

\bibitem{egorshel2}
Egor Shelukhin.
\newblock Pseudo-rotations and {S}teenrod squares revisited.
\newblock {\em Math. Res. Lett.}, 28(4):1255--1261, 2021.

\bibitem{shelwilk}
Egor Shelukhin and Nicholas Wilkins.
\newblock {Q}uantum {S}teenrod powers and {H}amiltonian maps.
\newblock To appear in 2024.

\bibitem{shelukhin-zhao}
Egor Shelukhin and Jingyu Zhao.
\newblock The $\mathbb{Z}/p\mathbb{Z}$-equivariant product-isomorphism in fixed
  point {F}loer cohomology.
\newblock {\em Journal of Symplectic Geometry}, 19(5):1101--1188, 2021.

\bibitem{steenrod}
N.~E. Steenrod.
\newblock {P}roducts of cocycles and extensions of mappings.
\newblock {\em Ann. of Math. (2)}, 48:290--320, 1947.

\bibitem{voisin}
Claire Voisin.
\newblock A mathematical proof of a formula of {A}spinwall and {M}orrison.
\newblock {\em Compositio Mathematica}, 104(2):135--151, 1996.

\bibitem{wilkins18}
Nicholas Wilkins.
\newblock A construction of the quantum {S}teenrod squares and their algebraic
  relations.
\newblock {\em Geom. Topol.}, 24(2):885--970, 2020.

\bibitem{LbP}
Nicholas Wilkins.
\newblock {S}$^{1}$-localisation by pseudocycles, lifts to {S}$^1$-localisation
  of moduli spaces, and application to invariants of {S}$^1$-equivariant
  symplectic cohomology.
\newblock {\em arXiv preprint arXiv:2212.06044}, 2022.

\bibitem{wilkins2}
Nicholas Wilkins.
\newblock Quantum {S}teenrod squares and the equivariant pair-of-pants in
  symplectic cohomology.
\newblock {\em Journal of Topology and Analysis}, 15(01):1--56, 2023.

\bibitem{xu}
Guangbo Xu.
\newblock Quantum {K}irwan map and quantum {S}teenrod operation.
\newblock {\em arXiv preprint arXiv:2405.12902}, 2024.

\bibitem{zinger}
Aleksey Zinger.
\newblock Pseudocycles and integral homology.
\newblock {\em Trans. Amer. Math. Soc.}, 360(5):2741--2765, 2008.

\end{thebibliography}

\end{document}